\def\O{{\mathscr O}}
\journalname{Vietnam Journal of Mathematics}
\begin{document}
\numberwithin{equation}{section}
\numberwithin{lemma}{section}
\numberwithin{theorem}{section}
\numberwithin{corollary}{section}
\numberwithin{remark}{section}
\setlength{\baselineskip}{15pt}
\title{\Large\bf{Parallel hybrid iterative methods for  }\\
\bf {variational inequalities, equilibrium problems }\\
\bf{and common fixed point problems }}
\titlerunning{Parallel hybrid iterative methods for VIs, EPs, and FPPs}
\author {P. K. Anh \and D.V. Hieu} 
\dedication{\it Dedicated to Professor Nguyen Khoa Son's 65th Birthday}
%\authorrunning{Short form of author list} 
\institute{ P. K. Anh (Corresponding author)\and D.V. Hieu \at College of Science, Vietnam National University, Hanoi,  $334$ 
Nguyen Trai, Thanh Xuan, Hanoi, Vietnam \\\email{anhpk@vnu.edu.vn, dv.hieu83@gmail.com}} 
\date{}
\maketitle {}
\begin{abstract}
\noindent In this paper we propose two strongly convergent parallel hybrid iterative  methods for finding a common element of the set of fixed 
points of a family of quasi $\phi$-asymptotically nonexpansive mappings $\{F(S_j)\}_{j=1}^N$, the set of solutions of variational 
inequalities $\{VI(A_i,C)\}_{i=1}^M$ and the set of solutions of equilibrium problems $\{EP(f_k)\}_{k=1}^K$ in uniformly smooth and 2-uniformly 
convex Banach spaces. A numerical experiment is given to verify the efficiency of the proposed parallel algorithms.
\end{abstract}
\keywords{Quasi $\phi$-asymptotically nonexpansive mapping \and Variational inequality \and Equilibrium problem \and Hybrid method
\and Parallel 
computation}
\subclass{47H05 \and 47H09 \and 47H10 \and 47J25 \and 65J15 \and 65Y05}
\section{Introduction}
\setcounter{lemma}{0}
\setcounter{theorem}{0}
\setcounter{equation}{0}
Let $C$ be a nonempty closed convex subset of a Banach space $E$. 
The variational inequality for a possibly nonlinear mapping  $A:C\to E$, consists of finding $p^*\in C$ such as
\begin{equation}\label{eq:VIP}
\left\langle Ap^*,p-p^* \right\rangle \ge 0,\quad \forall p\in C.
\end{equation}
The set of solutions of (\ref{eq:VIP}) is denoted by $VI(A,C)$. \\
Takahashi and Toyoda \cite{TT2003} proposed a weakly convergent method for finding a common element of the set of 
fixed points of a nonexpansive mapping and the set of solutions of the variational inequality for an $\alpha$ - inverse strongly monotone 
mapping in a Hilbert space.
\begin{theorem}\label{theo.TakahashiToyoda2003}\cite{TT2003}
Let $K$ be a closed convex subset of a real Hilbert space $H$. Let $\alpha>0$. Let $A$ be an $\alpha$ - inverse strongly-monotone 
mapping of $K$ into $H$, and let $S$ be a nonexpansive mapping of $K$ into itself such that $F(S)\bigcap VI(K,A)\ne \emptyset$. Let 
$\left\{x_n\right\}$ be a sequence generated by
$$
\left\{
\begin{array}{ll}
&x_0\in K,\\
&x_{n+1}=\alpha_n x_n+(1-\alpha_n)SP_K(x_n-\lambda_n Ax_n),
\end{array}
\right.
$$
for every $n=0,1,2,\ldots$,  where $\lambda_n \in [a,b]$ for some $a,b\in(0,2\alpha)$ and $\alpha_n \in [c,d]$ for some $c,d\in(0,1)$. Then,
 $\left\{x_n\right\}$ converges weakly to $z\in F(S)\bigcap VI(K,A)$, where $z=\lim_{n\to \infty}P_{F(S)\bigcap VI(K,A)}x_n$.
\end{theorem}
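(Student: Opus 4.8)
\smallskip
\noindent\textit{Proof (sketch).} The plan is to follow the classical Fej\'er-monotonicity route. Put $y_n:=P_K(x_n-\lambda_n Ax_n)$ and $\Omega:=F(S)\cap VI(K,A)$; since $x_0\in K$, $y_n\in K$ by definition of the metric projection, $Sy_n\in K$, the iterate $x_{n+1}$ is a convex combination of points of $K$ and hence lies in $K$. I would first record the standard facts: as $A$ is $\alpha$-inverse strongly monotone, $\|(I-\lambda A)x-(I-\lambda A)y\|^2=\|x-y\|^2-\lambda(2\alpha-\lambda)\|Ax-Ay\|^2$, so $I-\lambda A$ is nonexpansive for $\lambda\in(0,2\alpha]$ and $A$ is $(1/\alpha)$-Lipschitz; $P_K$ is firmly nonexpansive; $p\in VI(K,A)$ iff $p=P_K(p-\lambda Ap)$ for some (equivalently every) $\lambda>0$; and $F(S)$, $VI(K,A)$ are closed and convex, so $P_\Omega$ is well defined.

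\textit{Step 1 (Fej\'er monotonicity and asymptotic regularity).} Fix $u\in\Omega$. Firm nonexpansiveness of $P_K$ together with the identity above yields both $\|y_n-u\|^2\le\|x_n-u\|^2-\lambda_n(2\alpha-\lambda_n)\|Ax_n-Au\|^2$ and $\|y_n-u\|^2\le\|x_n-u\|^2-\|x_n-y_n-\lambda_n(Ax_n-Au)\|^2$. Applying the identity $\|\alpha_n a+(1-\alpha_n)b\|^2=\alpha_n\|a\|^2+(1-\alpha_n)\|b\|^2-\alpha_n(1-\alpha_n)\|a-b\|^2$ to $x_{n+1}-u=\alpha_n(x_n-u)+(1-\alpha_n)(Sy_n-u)$ and using $\|Sy_n-u\|\le\|y_n-u\|$ gives
$$\|x_{n+1}-u\|^2\le\|x_n-u\|^2-(1-\alpha_n)\lambda_n(2\alpha-\lambda_n)\|Ax_n-Au\|^2-\alpha_n(1-\alpha_n)\|x_n-Sy_n\|^2.$$
So $\{\|x_n-u\|\}$ is nonincreasing, hence convergent, and $\{x_n\},\{y_n\}$ are bounded; telescoping and using $\alpha_n\in[c,d]$, $\lambda_n\in[a,b]\subset(0,2\alpha)$ forces $\|Ax_n-Au\|\to 0$ and $\|x_n-Sy_n\|\to 0$. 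Feeding the second projection estimate into the recursion and using $\|x_n-u\|^2-\|x_{n+1}-u\|^2\to 0$ and $\|Ax_n-Au\|\to 0$ gives $\|x_n-y_n\|\to 0$, whence $\|x_n-Sx_n\|\le\|x_n-Sy_n\|+\|Sy_n-Sx_n\|\le\|x_n-Sy_n\|+\|y_n-x_n\|\to 0$.

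\textit{Step 2 (weak cluster points lie in $\Omega$).} Let $x_{n_k}\rightharpoonup v$. Since $K$ is weakly closed, $v\in K$, and by the demiclosedness principle for the nonexpansive $S$ together with $\|x_n-Sx_n\|\to 0$, $v\in F(S)$. To get $v\in VI(K,A)$ I would invoke maximal monotonicity of $T:=A+N_K$ (normal cone of $K$), for which $T^{-1}(0)=VI(K,A)$: given $(w,w^*)\in\mathrm{graph}\,T$ we have $w^*-Aw\in N_K(w)$, and the variational characterization $\langle w-y_n,\lambda_n^{-1}(y_n-x_n)+Ax_n\rangle\ge 0$ of $y_n$, combined with monotonicity of $A$, gives $\langle w-y_{n_k},w^*\rangle\ge\langle w-y_{n_k},Ay_{n_k}-Ax_{n_k}\rangle+\lambda_{n_k}^{-1}\langle w-y_{n_k},x_{n_k}-y_{n_k}\rangle$. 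As $\|x_n-y_n\|\to 0$, $\lambda_n\ge a>0$, $A$ is Lipschitz and $y_{n_k}\rightharpoonup v$, letting $k\to\infty$ yields $\langle w-v,w^*\rangle\ge 0$ for every $(w,w^*)\in\mathrm{graph}\,T$; by maximality $0\in Tv$, i.e.\ $v\in VI(K,A)$. Thus every weak subsequential limit of $\{x_n\}$ belongs to $\Omega$.

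\textit{Step 3 (weak convergence and identification of the limit).} If $\{x_n\}$ had two distinct weak cluster points $v\ne v'$ in $\Omega$, then, both $\{\|x_n-v\|\}$ and $\{\|x_n-v'\|\}$ being convergent, Opial's property of $H$ would yield the usual contradiction; hence $x_n\rightharpoonup v$ for a unique $v\in\Omega$. Finally, since $\|x_{n+1}-u\|\le\|x_n-u\|$ for all $u\in\Omega$, taking $u=P_\Omega x_n$ shows $d(x_n,\Omega)$ is nonincreasing, hence convergent; the obtuse-angle characterization of $P_\Omega$ then gives, for $m\ge n$, $\|P_\Omega x_m-P_\Omega x_n\|^2\le\|x_m-P_\Omega x_n\|^2-d(x_m,\Omega)^2\le d(x_n,\Omega)^2-d(x_m,\Omega)^2\to 0$, so $\{P_\Omega x_n\}$ is Cauchy and converges to some $z\in\Omega$. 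Passing to the limit in $\langle x_n-P_\Omega x_n,P_\Omega x_n-v\rangle\ge 0$, where the first factor tends weakly to $v-z$ and the second strongly to $z-v$, gives $-\|v-z\|^2\ge 0$, so $z=v=\lim_n P_\Omega x_n$, which completes the proof. The main obstacle is Step 2, specifically the verification that $v\in VI(K,A)$: the maximal-monotonicity argument has to be balanced carefully against the vanishing of $\|x_n-y_n\|$ while keeping $\lambda_n$ away from $0$; Step 1 is computational but must be chained in the right order, and the Cauchy estimate for $\{P_\Omega x_n\}$ in Step 3 is the other mildly delicate point.
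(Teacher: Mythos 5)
Your argument is correct: it is essentially the original Takahashi--Toyoda proof (Fej\'er monotonicity via the inverse-strong-monotonicity estimate and firm nonexpansiveness of $P_K$, demiclosedness of $I-S$, Rockafellar's maximal monotonicity of $A+N_K$ to place weak cluster points in $VI(K,A)$, Opial's condition for uniqueness, and the Cauchy estimate for $\{P_\Omega x_n\}$). The paper itself states this theorem as a cited background result and gives no proof, so there is nothing to compare against beyond noting that your route is the standard one; the only nitpick is that the relation $\|(I-\lambda A)x-(I-\lambda A)y\|^2\le\|x-y\|^2-\lambda(2\alpha-\lambda)\|Ax-Ay\|^2$ is an inequality rather than an identity, which is how you in fact use it.
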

In 2008, Iiduka and Takahashi \cite{IT2008} considered  problem $(\ref{eq:VIP})$ in a 2-uniformly convex, uniformly smooth Banach 
space under the following assumptions:
\begin{itemize}
\item [(V1)] $A$ is $\alpha$-inverse-strongly-monotone.
\item [(V2)]$VI(A,C)\ne \emptyset$.
\item [(V3)] $||Ay||\le||Ay-Au||$ for all $y\in C$ and $u\in VI(A,C)$.
\end{itemize}
\begin{theorem}\label{thm:IT2008}\cite{IT2008}
Let $E$ be a  $2$-uniformly convex, uniformly smooth Banach space whose duality mapping $J$ is weakly sequentially continuous, and 
let $C$  be a 
nonempty, closed convex subset of  $E$ . Assume that $A$ is a mapping of $C$ into $E^*$ satisfing conditions $(V1)-(V3)$. 
Suppose that $x_1=x\in C$ and $\left\{x_n\right\}$ is given by
$$
x_{n+1}=\Pi_C J^{-1}(Jx_n-\lambda_n Ax_n)
$$
for every $n = 1 , 2 ,... , $ where $\left\{\lambda_n\right\}$ is a sequence of positive numbers. If $\lambda_n$ is chosen so that 
$\lambda_n\in [a,b]$ for some
$a,b$ with $0<a<b<\frac{c^2\alpha}{2}$ , then the sequence $\left\{x_n\right\}$ converges weakly to some element $z$ in $VI(C, A).$  
Here $1/c$ is
the $2$-uniform convexity constant of $E$, and $z=\lim_{n\to\infty}\Pi_{VI(A,C)}x_n$.
\end{theorem}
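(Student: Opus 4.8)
The plan is to run the standard quasi-Fej\'er scheme relative to Alber's Lyapunov functional $\phi(x,y)=\|x\|^{2}-2\langle x,Jy\rangle+\|y\|^{2}$ and the generalized projection $\Pi_{C}$. Set $y_{n}=J^{-1}(Jx_{n}-\lambda_{n}Ax_{n})$, so $Jy_{n}=Jx_{n}-\lambda_{n}Ax_{n}$ and $x_{n+1}=\Pi_{C}y_{n}$, and fix $u\in VI(A,C)$. The core computation is the estimate
\[
\phi(u,x_{n+1})\le\phi(u,x_{n})-2\lambda_{n}\Bigl(\alpha-\tfrac{2\lambda_{n}}{c^{2}}\Bigr)\|Ax_{n}-Au\|^{2}.
\]
To get it I would: (i) use $\phi(u,x_{n+1})=\phi(u,\Pi_{C}y_{n})\le\phi(u,y_{n})$; (ii) expand $\phi(u,y_{n})=\|u\|^{2}-2\langle u,Jx_{n}\rangle+2\lambda_{n}\langle u,Ax_{n}\rangle+\|Jx_{n}-\lambda_{n}Ax_{n}\|^{2}$ and bound the last term by the $2$-uniform smoothness of $E^{*}$ (which holds because $E$ is $2$-uniformly convex with constant $1/c$), namely $\|Jx_{n}-\lambda_{n}Ax_{n}\|^{2}\le\|x_{n}\|^{2}-2\lambda_{n}\langle x_{n},Ax_{n}\rangle+\tfrac{4}{c^{2}}\lambda_{n}^{2}\|Ax_{n}\|^{2}$, so that $\phi(u,y_{n})\le\phi(u,x_{n})-2\lambda_{n}\langle x_{n}-u,Ax_{n}\rangle+\tfrac{4}{c^{2}}\lambda_{n}^{2}\|Ax_{n}\|^{2}$; (iii) invoke $\langle Au,x_{n}-u\rangle\ge0$ and (V1) to get $\langle x_{n}-u,Ax_{n}\rangle\ge\alpha\|Ax_{n}-Au\|^{2}$; and (iv) use (V3), $\|Ax_{n}\|^{2}\le\|Ax_{n}-Au\|^{2}$, to merge the two norm terms. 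Because $\lambda_{n}\le b<c^{2}\alpha/2$, the coefficient $\alpha-2\lambda_{n}/c^{2}$ is bounded below by $\alpha-2b/c^{2}>0$; hence $\{\phi(u,x_{n})\}$ is nonincreasing (so convergent), $\{x_{n}\}$ is bounded, telescoping gives $\sum_{n}\|Ax_{n}-Au\|^{2}<\infty$, and therefore $\|Ax_{n}-Au\|\to0$ and, by (V3), $\|Ax_{n}\|\to0$.

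Next I would establish asymptotic regularity. From $\|Jy_{n}-Jx_{n}\|=\lambda_{n}\|Ax_{n}\|\to0$ and the uniform norm-to-norm continuity of $J^{-1}$ on bounded sets (uniform smoothness of $E$) one gets $\|y_{n}-x_{n}\|\to0$. From the three-point inequality $\phi(x_{n+1},y_{n})+\phi(u,x_{n+1})\le\phi(u,y_{n})$ for the generalized projection and the already-known monotone decrease of $\phi(u,\cdot)$ along the iterates, $\phi(x_{n+1},y_{n})\le\phi(u,x_{n})-\phi(u,x_{n+1})\to0$, so by the Kamimura--Takahashi lemma (valid in uniformly convex, uniformly smooth $E$) $\|x_{n+1}-y_{n}\|\to0$; combining, $\|x_{n+1}-x_{n}\|\to0$. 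Since $\{x_{n}\}$ is bounded, pick a subsequence $x_{n_{i}}\rightharpoonup z$; as $C$ is convex and closed (hence weakly closed) and $x_{n}\in C$, we have $z\in C$.

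To show $z\in VI(A,C)$ I would use the maximal monotone operator $Tv=Av+N_{C}(v)$ for $v\in C$ and $Tv=\emptyset$ otherwise, where $N_{C}(v)=\{w\in E^{*}:\langle v-p,w\rangle\ge0\ \forall p\in C\}$; its zero set is exactly $VI(A,C)$. Take $(v,w)$ in the graph of $T$, so $w-Av\in N_{C}(v)$. The projection inequality $\langle x_{n+1}-v,Jy_{n}-Jx_{n+1}\rangle\ge0$ rearranges to $\langle v-x_{n+1},Ax_{n}\rangle\ge\tfrac{1}{\lambda_{n}}\langle v-x_{n+1},Jx_{n}-Jx_{n+1}\rangle$; combining this with $\langle v-x_{n+1},w\rangle\ge\langle v-x_{n+1},Av\rangle$, with monotonicity of $A$, and with the Lipschitz continuity of $A$ (a consequence of (V1): $\|Ax-Ay\|\le\alpha^{-1}\|x-y\|$), and then letting $n=n_{i}\to\infty$ — using $x_{n_{i}+1}\rightharpoonup z$, $\|Ax_{n_{i}+1}-Ax_{n_{i}}\|\to0$, $\|Jx_{n_{i}}-Jx_{n_{i}+1}\|\to0$ and $\lambda_{n_{i}}\ge a>0$ — yields $\langle v-z,w\rangle\ge0$. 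Maximality of $T$ then gives $0\in Tz$, i.e. $z\in VI(A,C)$.

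Finally I would upgrade to convergence of the whole sequence and identify the limit. Since $\{\phi(u,x_{n})\}$ converges for every $u\in VI(A,C)$ and $J$ is weakly sequentially continuous, any two weak subsequential limits $z,z'\in VI(A,C)$ satisfy $\langle z-z',Jz-Jz'\rangle=0$, whence $z=z'$ by strict monotonicity of $J$ (strict convexity of $E$); so $x_{n}\rightharpoonup z$. For the identification, put $w_{n}=\Pi_{VI(A,C)}x_{n}$. Because $\phi(u,x_{n+1})\le\phi(u,x_{n})$ for all $u\in VI(A,C)$, one gets $\phi(w_{n+1},x_{n+1})\le\phi(w_{n},x_{n+1})\le\phi(w_{n},x_{n})$, so $\{\phi(w_{n},x_{n})\}$ converges; the three-point inequality for $\Pi_{VI(A,C)}$ then gives $\phi(w_{n},w_{m})\to0$ as $n,m\to\infty$, so $\{w_{n}\}$ is Cauchy with limit $w\in VI(A,C)$, and passing to the limit in $\langle w_{n}-z,Jx_{n}-Jw_{n}\rangle\ge0$ (again using weak sequential continuity of $J$) forces $w=z$. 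Hence $z=\lim_{n\to\infty}\Pi_{VI(A,C)}x_{n}$. I expect the main obstacle to be steps~(i)--(iv), i.e. extracting from $2$-uniform convexity the sharp norm inequality in $E^{*}$ with the constant $4/c^{2}$ that makes the threshold $b<c^{2}\alpha/2$ exactly right; once that estimate is in place, the remaining arguments are routine applications of Alber's projection inequalities, the Kamimura--Takahashi lemma, and the maximal monotone operator device.
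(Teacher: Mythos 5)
The paper states this theorem only as quoted background from Iiduka and Takahashi \cite{IT2008} and contains no proof of it, so there is nothing in-paper to compare against line by line; what can be said is that your proposal is the standard argument, is essentially correct, and coincides with the machinery the paper itself deploys when proving Theorem \ref{thm:VIP-EP-FPP}. Your core decrease estimate
\[
\phi(u,x_{n+1})\le\phi(u,x_{n})-2\lambda_{n}\left(\alpha-\frac{2\lambda_{n}}{c^{2}}\right)\|Ax_{n}-Au\|^{2}
\]
is exactly the single-operator specialization of the chain $(\ref{eq:3})$--$(\ref{eq:4})$ there, and your maximal-monotone device for showing $z\in VI(A,C)$ is Step 5 of that proof verbatim (via Lemma \ref{lem:N-Normal-Cone}). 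The one point where you take a genuinely different, and riskier, route is your step (ii): you want a $2$-uniform-smoothness inequality in $E^{*}$ with the sharp constant $4/c^{2}$, and you rightly flag this as the main obstacle. You can bypass it entirely: apply Lemma \ref{lem:V-Property} with $x^{*}=Jx_{n}-\lambda_{n}Ax_{n}$ and $y^{*}=\lambda_{n}Ax_{n}$ to get $\phi(u,y_{n})\le\phi(u,x_{n})-2\lambda_{n}\langle y_{n}-u,Ax_{n}\rangle$, split $\langle y_{n}-u,Ax_{n}\rangle=\langle y_{n}-x_{n},Ax_{n}\rangle+\langle x_{n}-u,Ax_{n}\rangle$, and bound $|\langle y_{n}-x_{n},Ax_{n}\rangle|\le\frac{2}{c^{2}}\|Jy_{n}-Jx_{n}\|\,\|Ax_{n}\|=\frac{2}{c^{2}}\lambda_{n}\|Ax_{n}\|^{2}$ by Lemma \ref{lem:2-uniform-convex}; this produces the same term $\frac{4\lambda_{n}^{2}}{c^{2}}\|Ax_{n}\|^{2}$ and the same threshold $b<c^{2}\alpha/2$ with no duality-of-moduli computation. (Your claimed inequality is in fact true --- integrating the $\frac{2}{c^{2}}$-Lipschitz bound for $J^{-1}$ even yields the better constant $2/c^{2}$ --- but it requires an argument, whereas the $V$-functional route requires none.) The remaining steps --- summability of $\|Ax_{n}-Au\|^{2}$, asymptotic regularity via Lemmas \ref{lem.proProjec} and \ref{lem.prophi}, uniqueness of the weak cluster point from the convergence of $\langle z-z',Jx_{n}\rangle$ together with weak sequential continuity and strict monotonicity of $J$, and the identification $z=\lim_{n\to\infty}\Pi_{VI(A,C)}x_{n}$ from the monotone decrease of $\phi(w_{n},x_{n})$ --- are all sound.
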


In 2009,  Zegeye and Shahzad \cite{ZS2009} studied the following hybrid 
iterative algorithm in a 2-uniformly convex and uniformly smooth Banach space for finding a common element of the set of fixed points of a 
weakly relatively nonexpansive mapping $T$ and the set of solutions of a 
variational inequality involving an $\alpha$-inverse strongly monotone mapping $A$:
$$
\left \{
\begin{array}{ll}
&y_n=\Pi_C\left(J^{-1}(Jx_n-\lambda_n Ax_n)\right),\\
&z_n=Ty_n,\\
&H_0=\left\{v\in C:\phi(v,z_0)\le\phi(v,y_0)\le\phi(v,x_0)\right\},\\
&H_n=\left\{v\in H_{n-1}\bigcap W_{n-1}:\phi(v,z_n)\le\phi(v,y_n)\le\phi(v,x_n)\right\},\\
&W_0=C,\\
&W_n=\left\{v\in H_{n-1}\bigcap W_{n-1} : \left\langle x_n-v,Jx_0-Jx_n\right\rangle \ge 0\right\},\\
&x_{n+1}=P_{H_n\bigcap W_n}x_0,n\ge 1,
\end{array}
\right.
$$
where $J$ is the normalized duality mapping on $E$. The strong convergence of  $\left\{x_n\right\}$ to $\Pi_{F(T)\bigcap 
VI(A,C)}x_0$ has been established.\\
Kang, Su, and Zhang \cite{KSZ2010} extended this algorithm to a weakly relatively nonexpansive mapping, a 
variational inequality  and an equilibrium problem. Recently, Saewan and Kumam \cite{SK2012} have constructed a sequential hybrid block 
iterative algorithm for an infinite family of closed and uniformly quasi $\phi$- asymptotically nonexpansive mappings, a variational 
inequality for an $\alpha$ -inverse-strongly monotone mapping, and a system of equilibrium problems.

Qin, Kang, and Cho \cite{QKC2009} considered the following sequential hybrid method for a pair of inverse strongly monotone and a 
quasi $\phi$-nonexpansive mappings in a 2-uniformly convex and uniformly smooth Banach space:
$$
\left \{
\begin{array}{ll}
&x_0=E, C_1=C, x_1=\Pi_{C_1}x_0,\\
&u_n=\Pi_C\left(J^{-1}(Jx_n-\eta_n Bx_n)\right),\\
&z_n=\Pi_C\left(J^{-1}(Ju_n-\lambda_n Au_n)\right),\\
&y_n=Tz_n,\\
&C_{n+1}=\left\{v\in C_n:\phi(v,y_n)\le\phi(v,z_n)\le \phi(v,u_n)\le\phi(v,x_n)\right\},\\
&x_{n+1}=\Pi_{C_{n+1}}x_0,n\ge 0.
\end{array}
\right.
$$
They proved that the sequence $\left\{x_n\right\}$ converges strongly to $\Pi_Fx_0$, where $F={F(T)\bigcap VI(A,C)\bigcap VI(B,C)}.$

Let $f$ be a bifunction from $C\times C$ to a set of real numbers $\mathbb{R}$. The equilibrium problem for $f$ consists of  finding an
element  $\widehat{x}\in C$, such that
\begin{equation}\label{eq:EP}
f(\widehat{x},y)\ge 0,\, \forall y\in C.
\end{equation}
The set of solutions of the equilibrium problem $(\ref{eq:EP})$ is denoted by $EP(f)$. Equilibrium problems include 
several problems such as: variational inequalities, optimization problems, fixed point problems, ect. In recent years, equilibrium problems have been studied widely and several solution methods have been proposed (see \cite{ABH2014,KSZ2010,SK2012,SLZ2011,TT2007}).
On the other hand, for finding a common element in $F(T)\bigcap EP(f)$, Takahashi and Zembayashi \cite{TZ2009} introduced the 
following algorithm in a uniformly smooth and uniformly convex Banach space:
$$
\left \{
\begin{array}{ll}
&x_0\in C,\\
&y_n=J^{-1}(\alpha_n Jx_n+(1-\alpha_n )JTy_n),\\
&u_n \in C, {\rm such~ that}, f(u_n,y)+\frac{1}{r_n}\left\langle y-u_n,Ju_n-Jy_n\right\rangle \ge 0 \quad \forall y\in C,\\
&H_n=\left\{v\in C:\phi(v,u_n)\le\phi(v,x_n)\right\},\\
&W_n=\left\{v\in C: \left\langle x_n-v,Jx_0-Jx_n\right\rangle \ge 0\right\},\\
&x_{n+1}=P_{H_n\bigcap W_n}x_0,n\ge 1.
\end{array}
\right.
$$
The strong convergence of  the sequences $\left\{x_n\right\}$ and $\left\{u_n\right\}$ to $\Pi_{F(T)\bigcap EP(f)}x_0$ has been established.\\
Recently, the above mentioned algorithms have been generalized and modified for finding a common point of the set of solutions of variational 
inequalities, the set of fixed points of quasi $\phi$- (asymptotically) nonexpansive mappings, and the set of solutions of equilibrium 
problems by several authors, such as Takahashi and Zembayashi \cite{TZ2009},
%Cholamjiak \cite{C2009}, Zegeye \cite{Z2010}, Wang and Zhou \cite{WZ2011},
Wang et al. \cite{WKC2012} and others.% \cite{IT2004,Y2001}.%\cite{Z2012,K1976,KB2013,BD2011,AN2002}.\\
\\Very recently, Anh and Chung \cite{AC2013} have considered the following parallel hybrid method for a finite family of 
relatively nonexpansive mappings $\{T_i\}_{i=1}^N$:
$$
\left \{
\begin{array}{ll}
&x_0\in C,\\ 
& y_n^i=J^{-1}(\alpha_n Jx_n+(1-\alpha_n)JT_{i}x_n), \quad i=1,\ldots,N,\\
&i_n=\arg\max_{1\le i\le N}\left\{\left\|y_n^i-x_n \right\|\right\}, \quad  \bar{y}_n := y_n^{i_n},\\
&C_n=\left\{v\in C:\phi(v,\bar{y}_n)\le \phi(v,x_n)\right\},\\
&Q_n=\left\{v\in C : \left\langle Jx_0-Jx_n,x_n-v\right\rangle \ge 0\right\},\\
&x_{n+1}=\Pi_{C_n\bigcap Q_n}x_0,n\ge 0.
\end{array}
\right.
$$
This algorithm was extended, modified and generelized by Anh and Hieu \cite{AH2014} for a finite family of asymptotically quasi 
$\phi$-nonexpansive mappings in Banach spaces. Note that the proposed parallel hybrid methods in \cite{AC2013,AH2014} can be used for solving simultaneuous systems of maximal monotone mappings. Other parallel methods for solving accretive operator equations can be found in \cite{ABH2014}.\\
In this paper, motivated and inspired by the above mentioned results, we propose  two novel parallel iterative methods for finding a common 
element of the set of fixed points of a family of asymptotically quasi $\phi$-nonexpansive mappings $\{F(S_j)\}_{j=1}^N$, the set of
solutions of  variational inequalities $\{VI(A_i,C)\}_{i=1}^M$, and the set of solutions of equilibrium problems $\{EP(f_k)\}_{k=1}^K$ in 
uniformly smooth and 2-uniformly convex Banach spaces, namely:\\
{\bf Method A}\\
\begin{equation}\label{eq:VIP-EP-FPP}
\left \{
\begin{array}{ll}
&x_0\in C\quad\mbox{chosen arbitrarily,}\\
&y_n^i=\Pi_C\left(J^{-1}(Jx_n-\lambda_n A_i x_n)\right), i=1,2,\ldots M, \\
&i_n=\arg\max\left\{||y_n^i-x_n||:i=1,\ldots, M \right\}, \bar{y}_n=y_n^{i_n},\\
&z_n^j=J^{-1}\left(\alpha_n Jx_n+(1-\alpha_n)JS_j^n\bar{y}_n\right),j=1,\ldots, N,\\
&j_n=\arg\max\left\{||z_n^j-x_n||:j=1,\ldots, N\right\}, \bar{z}_n=z_n^{j_n},\\
&u_n^k=T_{r_n}^k \bar{z}_n,k=1,\ldots, K, \\
&k_n=\arg\max\left\{||u_n^k-x_n||:k=1,2,\ldots K\right\}, \bar{u}_n=u_n^{k_n},\\
&C_{n+1}=\left\{z\in C_n:\phi(z,\bar{u}_n)\le \phi(z,\bar{z}_n)\le\phi(z,x_n)+\epsilon_n\right\},\\
&x_{n+1}=\Pi_{C_{n+1}}x_0,n\ge 0,
\end{array}
\right.
\end{equation}
where, $T_rx := z $ is a unique solution to a regularized equlibrium problem \\
$f(z,y)+\frac{1}{r}\langle y-z,Jz-Jx\rangle\geq0, \quad \forall y\in
C.$\\
Further, the control parameter sequences $\left\{\lambda_n\right\},\left\{\alpha_n\right\},\left\{r_n\right\}$ satisfy the conditions
\begin{equation}\label{dk:VIP-EP-FPP}
0\le\alpha_n\le 1, \lim\sup_{n\to\infty}\alpha_n <1,\quad \lambda_n\in [a,b],\quad r_n\ge d,
\end{equation}
for some $a,b\in (0,\alpha c^2 /2), d> 0$ with $1/c$ being the 2-uniform convexity constant of $E.$  Concerning the sequence 
$\{\epsilon_n\}$, we 
consider two cases. If the mappings $\{S_i\}$ are quasi $\phi$-asymptotically nonexpansive, we assume that the solution set $F$ is 
bounded, i.e., there exists a positive number $\omega$, such that $F \subset \Omega := \{u \in C: ||u|| \leq \omega \}$ and put
$\epsilon_n:= (k_n-1)(\omega + ||x_n||)^2$. If the mappings $\{S_i\}$ are quasi $\phi$-nonexapansive, then $k_n = 1$, and we put 
$\epsilon_n = 0$.\\
{\bf Method B}
\begin{equation}\label{eq:VIP-EP-FPP1}
\left \{
\begin{array}{ll}
&x_0\in C\quad\mbox{chosen arbitrarily,}\\
&y_n^i=\Pi_C\left(J^{-1}(Jx_n-\lambda_n A_i x_n)\right), i=1,\ldots, M, \\
&i_n=\arg\max\left\{||y_n^i-x_n||:i=1,\ldots, M \right\}, \bar{y}_n=y_n^{i_n},\\
&z_n=J^{-1}\left(\alpha_{n,0} Jx_n+\sum_{j=1}^N \alpha_{n,j} JS_j^n\bar{y}_n\right),\\
&u_n^k=T_{r_n}^k z_n,  k=1,\ldots, K, \\
&k_n=\arg\max\left\{||u_n^k-x_n||:k=1,\ldots, K\right\}, \bar{u}_n=u_n^{i_n},\\
&C_{n+1}=\left\{z\in C_n:\phi(z,\bar{u}_n)\le\phi(z,x_n)+\epsilon_n\right\},\\
&x_{n+1}=\Pi_{C_{n+1}}x_0,n\ge 0,
\end{array}
\right.
\end{equation}
where, the control parameter sequences $\left\{\lambda_n\right\},\left\{\alpha_{n,j}\right\},\left\{r_n\right\}$ satisfy the conditions
\begin{equation}\label{dk:VIP-EP-FPP1}
0\le\alpha_{n,j}\le 1,\,\sum_{j=0}^N \alpha_{n,j}=1, \quad\lim_{n\to\infty}\inf\alpha_{n,0}\alpha_{n,j}>0,\quad \lambda_n\in [a,b],
\,r_n\ge d.
\end{equation}
In Method A $(\ref{eq:VIP-EP-FPP})$, knowing $x_n$ we find  the intermediate approximations $y_n^i,  i=1,\ldots, M $ in parallel. Using 
the farthest element among $y_n^i$  from $x_n$, we compute $z_n^j,  j=1,\ldots, N$ in parallel. Further, among $z_n^j$, 
we choose the farthest element from $x_n$ and determine solutions of regularized equilibrium problems $u_n^k,  k=1,\ldots,K$ in parallel.
Then the farthest from $x_n$ element among $u_n^k,$ denoted by $\bar{u}_n$ is chosen. 
Based on $\bar{u}_n$, a closed convex subset $C_{n+1}$ is constructed. Finally, the next 
approximation $x_{n+1}$ is defined as the generalized projection of $x_0$ onto $C_{n+1}$. \\
A similar idea of parallelism is employed in Method B $(\ref{eq:VIP-EP-FPP1})$. However, the subset $C_{n+1}$
 in Method B is simpler than that in Method A.\\
The results obtained in this paper extend and modify the corresponding results of Zegeye and Shahzad \cite{ZS2009}, Takahashi and Z
embayashi 
\cite{TZ2009}, Anh and Chung \cite{AC2013}, Anh and Hieu \cite{AH2014} and others.\\
The paper is organized as follows: In Section 2, we collect some definitions and results needed for further investigtion. Section 3 deals
with  the convergence analysis of the methods $(\ref{eq:VIP-EP-FPP})$ and $(\ref{eq:VIP-EP-FPP1})$. In the last section, a novel parallel 
hybrid iterative method for variational inequalities and closed, quasi $\phi$- nonexpansive mappings is studied.
%%%%%%%%%%%%%%%%%%%%%%%%%%%%%%%%%%%%%%%%%%%%%%%%%%%%%%%
\section{Preliminaries}
\setcounter{lemma}{0}
\setcounter{theorem}{0}
\setcounter{equation}{0}
\setcounter{remark}{0}
\setcounter{corollary}{0}
In this section we recall some definitions and results which will be used later. The reader is refered to \cite{AR2006} for more details. 
\begin{definition}
A Banach space $E$ is called
\begin{itemize}
\item  [$1)$] strictly convex if the unit sphere $S_1(0) = \{x \in X: ||x||=1\}$ is strictly convex, i.e., the inequality $||x+y|| <2$ holds for 
all $x, y \in S_1(0),  x \ne y ;$ 
 \item [$2)$] uniformly convex if for any given $\epsilon >0$ there exists $\delta = \delta(\epsilon)  >0$ such that for all $x,y \in E$ 
with $\left\|x\right\| \le 1, \left\|y\right\|\le 1,\left\|x-y\right\| = \epsilon$ the inequality $\left\|x+y\right\|\le 2(1-\delta)$ holds;
\item [$3)$] smooth if the limit
\begin{equation}\label{smoonthspa}
\lim\limits_{t \to 0} \frac{\left\|x+ty\right\|-\left\|x\right\|}{t}
\end{equation}
exists for all $x,y \in S_1(0)$;
\item[$4)$] uniformly smooth if the limit $(\ref{smoonthspa})$ exists uniformly for all $x,y\in S_1(0)$.
\end{itemize}
\end{definition}
The modulus of convexity of $E$ is the function $\delta_E:[0,2]\to [0,1]$ defined by
\[
\delta_{E}(\epsilon)=\inf \left\{1-\frac{\left\|x-y\right\|}{2}: \left\|x\right\| = \left\|y\right\|= 1,\left\|x-y\right\|=\epsilon\right\}
\]
for all $\epsilon \in [0,2]$. Note that $E$ is uniformly convex if only if $\delta_{E}(\epsilon)>0$ for all $0<\epsilon\le 2$ and 
$\delta_{E}(0)=0$. Let $p>1$, $E$ is said to be $p$-uniformly convex if there exists some constant $c>0$ such that $\delta_{E}(\epsilon)\ge 
c\epsilon^p$. It is well-known that spaces $L^p, l^p$ and $W_m^p$ are $p$-uniformly convex if $p>2$ and $2$
-uniformly convex if $1< p\le 2$ and a Hilbert space $H$ is uniformly smooth and $2$-uniformly convex.

Let $E$ be a real Banach space with its dual $E^*$. The dual product of $f\in E^*$ and $x\in E$ is denoted by $\left\langle x, f\right\rangle$ 
or $\left\langle f, x\right\rangle$. For the sake of simpicity, the norms of $E$ and $E^*$ are denoted by the same symbol $||.||$. The 
normalized duality mapping $J:E\to 2^{E^*}$ is defined by
\[
J(x)=\left\{f\in E^*: \left\langle f,x \right\rangle=\left\|x\right\|^2=\left\|f\right\|^2\right\}.
\]
The following properties can be found in \cite{C1990}:
\begin{itemize}
\item [i)] If $E$ is a smooth, strictly convex, and reflexive Banach space, then the normalized duality mapping $J:E\to 2^{E^*}$ is 
single-valued, one-to-one, and onto;
\item [ii)] If $E$ is a reflexive and strictly convex Banach space, then $J^{-1}$ is norm to weak $^*$  continuous;
\item [iii)] If $E$ is a uniformly smooth Banach space, then $J$ is uniformly continuous on each bounded subset of $E$;
\item [iv)] A Banach space $E$ is uniformly smooth if and only if $E^*$ is uniformly convex;
\item [v)] Each uniformly convex Banach space $E$ has the Kadec-Klee property, i.e., for any
sequence $\left\{x_n\right\} \subset E$, if $x_n \rightharpoonup x \in E$ and $\left\| x_n \right\|\to \left\|x\right\|$, then $x_n \to x$.
\end{itemize}
\begin{lemma}\label{lem:2-uniform-convex}\cite{ZS2009}
If $E$ is a 2-uniformly convex Banach space, then
\[
||x-y||\le\frac{2}{c^2}||Jx-Jy||,\quad \forall x,y\in E,
\]
where $J$ is the normalized duality mapping on $E$ and $0<c\le1$. 
\end{lemma}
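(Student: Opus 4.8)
The plan is to deduce the stated norm inequality from a quantitative monotonicity estimate for the duality mapping. Specifically, I would first show that in a $2$-uniformly convex space there is a constant $\kappa>0$, depending only on the $2$-uniform convexity constant, such that
\[
\langle x-y,\,Jx-Jy\rangle\ \ge\ \kappa\,\|x-y\|^2,\qquad\forall x,y\in E,
\]
and then the lemma follows immediately: by the Cauchy--Schwarz-type bound $\langle x-y,Jx-Jy\rangle\le\|x-y\|\,\|Jx-Jy\|$ we get $\kappa\,\|x-y\|^2\le\|x-y\|\,\|Jx-Jy\|$, hence $\|x-y\|\le\kappa^{-1}\|Jx-Jy\|$, and choosing the normalization so that $\kappa=c^2/2$ yields exactly $\|x-y\|\le\frac{2}{c^2}\|Jx-Jy\|$.

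To prove the monotonicity estimate I would use the characterization (Xu's inequalities for uniformly convex spaces) that $E$ is $2$-uniformly convex if and only if there is a constant $c'>0$ with
\[
\|\lambda x+(1-\lambda)y\|^2\ \le\ \lambda\|x\|^2+(1-\lambda)\|y\|^2-c'^{2}\,\lambda(1-\lambda)\,\|x-y\|^2
\]
for all $x,y\in E$ and all $\lambda\in[0,1]$. Writing $\lambda x+(1-\lambda)y=y+\lambda(x-y)$, subtracting $\|y\|^2$, dividing by $\lambda>0$ and letting $\lambda\downarrow 0$, and using that in a smooth space $t\mapsto\frac12\|z+tw\|^2$ is G\^ateaux differentiable with derivative $\langle w,Jz\rangle$ at $t=0$, I obtain
\[
2\langle x-y,\,Jy\rangle\ \le\ \|x\|^2-\|y\|^2-c'^{2}\|x-y\|^2 .
\]
Swapping the roles of $x$ and $y$ (equivalently, letting $\lambda\uparrow 1$) gives the mirror inequality; adding the two and cancelling the $\|x\|^2-\|y\|^2$ terms produces $\langle x-y,Jx-Jy\rangle\ge c'^{2}\|x-y\|^2$, which is the desired estimate with $\kappa=c'^{2}$.

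The differentiation and the addition step are routine; the main point requiring care is the bookkeeping of constants, since the literature normalizes the ``$2$-uniform convexity constant'' in several inequivalent ways --- via the modulus of convexity $\delta_E$, via the parallelogram-type inequality above, or via the modulus of smoothness of $E^*$ together with property $(iv)$ of $J$ --- and one must track how $c'$ relates to the constant $1/c$ used in the statement in order to land precisely on the factor $2/c^2$. Alternatively, one may simply invoke \cite{ZS2009} directly, where the constant is fixed by that paper's convention.
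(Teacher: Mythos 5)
The paper does not prove this lemma at all: it is quoted verbatim from \cite{ZS2009}, where it ultimately rests on Xu's inequalities for $2$-uniformly convex spaces. Your proposal reconstructs exactly that standard derivation, and it is sound. The parallelogram-type inequality $\|\lambda x+(1-\lambda)y\|^2\le\lambda\|x\|^2+(1-\lambda)\|y\|^2-c'^2\lambda(1-\lambda)\|x-y\|^2$, differentiated at $\lambda=0$ and $\lambda=1$ and summed, does give the uniform monotonicity bound $\left\langle x-y,Jx-Jy\right\rangle\ge c'^2\|x-y\|^2$ (and if one worries that $2$-uniform convexity alone does not give smoothness, the same limit can be run through the subdifferential inequality for the convex function $\tfrac12\|\cdot\|^2$, whose subdifferential is $J$, so no smoothness is actually needed); the duality-pairing bound $\left\langle x-y,Jx-Jy\right\rangle\le\|x-y\|\,\|Jx-Jy\|$ then yields $\|x-y\|\le c'^{-2}\|Jx-Jy\|$. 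The one item you leave open, deliberately, is whether the constant so obtained is precisely $2/c^2$; since the paper itself fixes its convention by declaring $1/c$ to be the \emph{best} constant in exactly this inequality (see the sentence immediately following the lemma), this is a matter of normalization rather than a mathematical gap, and your argument delivers the substantive content of the lemma, namely a Lipschitz bound for $J^{-1}$ with a constant controlled by the $2$-uniform convexity modulus.
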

The best constant $\frac{1}{c}$ is called the \textit{$2$-uniform convexity constant} of $E$.\\
Next we assume that $E$ is a smooth, strictly convex, and reflexive Banach space. In the sequel we always use $\phi: E\times E \to [0, \infty)$ 
to denote the Lyapunov functional defined by
\[
\phi(x,y)=\left\|x\right\|^2-2\left\langle x,Jy\right\rangle+\left\|y\right\|^2, \forall x, y \in E.
\]
From the definition of $\phi$, we have
\begin{equation}\label{eq:proPhi}
\left(\left\|x\right\|-\left\|y\right\|\right)^2\le \phi(x,y)\le \left(\left\|x\right\|+\left\|y\right\|\right)^2.
\end{equation}
Moreover, the Lyapunov functional satisfies the identity
\begin{equation}\label{eq:LFP}
\phi(x,y)=\phi(x,z)+\phi(z,y)+2\left\langle z-x,Jy-Jz\right\rangle
\end{equation}
for all $x,y,z\in E$.\\
The generalized projection $\Pi_C:E\to C$ is defined by 
\[
\Pi_C(x)=\arg\min_{y\in C}\phi(x,y).
\]
In what follows, we need the following properties of the functional $\phi$ and the generalized projection $\Pi_C$.
\begin{lemma} \label{lem.proProjec} \cite{A1996}
Let E be a smooth, strictly convex, and reflexive Banach space and $C$ a nonempty closed convex subset of $E$. Then the following 
conclusions hold:
\begin{itemize}
\item [$i)$] $\phi (x,\Pi_C (y))+\phi (\Pi_C (y),y) \le \phi (x,y), \forall x \in C, y \in E$;
\item [$ii)$] if $x\in E, z \in C$, then $z= \Pi_C(x)$ iff $\left\langle z-y;Jx-Jz\right\rangle \ge 0, \forall y \in C$;
\item [$iii)$] $\phi (x,y)=0$ iff $x=y$.
\end{itemize}
\end{lemma}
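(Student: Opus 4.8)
These are the standard properties of the generalized projection, so I would organize the argument around (ii): prove (ii) first, deduce (i) from it by a single application of the identity $(\ref{eq:LFP})$, and dispose of (iii) independently. Throughout I use that, by definition, $\Pi_C x$ is the point of $C$ minimising $w\mapsto\phi(w,x)=\|w\|^2-2\langle w,Jx\rangle+\|x\|^2$.

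For (iii), one direction is the direct computation $\phi(x,x)=2\|x\|^2-2\langle x,Jx\rangle=0$. Conversely, if $\phi(x,y)=0$, the left inequality in $(\ref{eq:proPhi})$ gives $(\|x\|-\|y\|)^2\le0$, hence $\|x\|=\|y\|$; substituting back yields $\langle x,Jy\rangle=\|y\|^2=\|x\|\,\|Jy\|$, so $Jy$ is a norm-attaining functional of norm $\|x\|$ supporting the ball at $x$, i.e.\ $Jy\in J(x)$. Since $E$ is smooth, $J$ is single-valued, so $Jx=Jy$, and since $E$ is strictly convex, $J$ is injective, whence $x=y$ (the case $x=0$ being trivial).

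For (ii), I would first note that the minimiser exists and is unique: $w\mapsto\phi(w,x)$ is continuous, convex and coercive on $C$ (because $\phi(w,x)\ge(\|w\|-\|x\|)^2$), $C$ is closed convex and $E$ reflexive, so a minimiser $z$ exists, while strict convexity of $\|\cdot\|^2$ in a strictly convex space forces uniqueness. For the forward implication, fix $y\in C$ and put $z_t=(1-t)z+ty\in C$ for $t\in(0,1]$; from $\phi(z,x)\le\phi(z_t,x)$ one gets
\[
0\le\frac{\phi(z_t,x)-\phi(z,x)}{t}=\frac{\|z_t\|^2-\|z\|^2}{t}-2\langle y-z,Jx\rangle,
\]
and letting $t\to0^+$, using that $E$ is smooth so that $\frac{1}{2}\|\cdot\|^2$ is G\^ateaux differentiable with derivative $J$, the first term tends to $2\langle Jz,y-z\rangle$; hence $\langle Jz-Jx,y-z\rangle\ge0$, i.e.\ $\langle z-y,Jx-Jz\rangle\ge0$ for every $y\in C$. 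For the converse, if $z\in C$ satisfies $\langle z-y,Jx-Jz\rangle\ge0$ for all $y\in C$, then $(\ref{eq:LFP})$ applied to the triple $(y,x,z)$ gives $\phi(y,x)-\phi(z,x)=\phi(y,z)+2\langle z-y,Jx-Jz\rangle\ge0$, so $z$ is the minimiser, i.e.\ $z=\Pi_C x$.

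Finally, for (i), given $x\in C$ and $y\in E$ set $z=\Pi_C y$. Applying $(\ref{eq:LFP})$ to the triple $(x,y,z)$ yields $\phi(x,y)=\phi(x,z)+\phi(z,y)+2\langle z-x,Jy-Jz\rangle$, while (ii) applied to the projection of $y$, tested against the point $x\in C$, gives $\langle z-x,Jy-Jz\rangle\ge0$; hence $\phi(x,y)\ge\phi(x,\Pi_C y)+\phi(\Pi_C y,y)$. The only genuinely delicate step is the passage to the limit $t\to0^+$ in (ii), which is precisely where the smoothness of $E$ is used; existence and uniqueness of the minimiser and the algebraic identity $(\ref{eq:LFP})$ are otherwise routine.
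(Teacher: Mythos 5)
Your proposal is correct, and there is nothing in the paper to compare it against: Lemma \ref{lem.proProjec} is quoted from Alber \cite{A1996} without proof, and your argument (existence/uniqueness of the minimiser by coercivity, convexity and reflexivity; the variational inequality via the G\^ateaux derivative of $\|\cdot\|^2$, which is where smoothness enters; then (i) and the converse of (ii) from the three-point identity $(\ref{eq:LFP})$; and (iii) from $(\ref{eq:proPhi})$ plus injectivity of $J$) is exactly the standard one. Note only that you correctly read $\Pi_C x$ as the minimiser of $w\mapsto\phi(w,x)$ over $C$, which is what parts (i) and (ii) require, even though the paper's displayed definition transposes the arguments.
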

%%%%%%%%%%%%%%%%%%%%%%%%%%%%%%%%%%%%%%%%%%
\begin{lemma}\cite{KL2008}\label{lem:CloseConvex-Cn}
Let C be a nonempty closed convex subset of a smooth Banach $E$, $x,y,z\in E$ and $\lambda\in [0,1]$. For a given real number $a$, the set
\[
D:=\left\{v\in C: \phi(v,z)\le \lambda\phi(v,x)+(1-\lambda)\phi(v,y)+a\right\}
\]
 is closed and convex.
\end{lemma}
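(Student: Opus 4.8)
The plan is to reduce the defining inequality of $D$ to a single affine (indeed linear) constraint on $v$, after which the conclusion follows from the elementary fact that the intersection of a closed half-space with a closed convex set is closed and convex. The one point that actually requires verification is that the terms quadratic in $v$ cancel.

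First I would substitute the definition $\phi(u,w)=\|u\|^2-2\langle u,Jw\rangle+\|w\|^2$ (well-defined since $E$ is smooth, so $J$ is single-valued) into each of $\phi(v,z)$, $\phi(v,x)$, $\phi(v,y)$. On the right-hand side the coefficient of $\|v\|^2$ is $\lambda+(1-\lambda)=1$, which matches the coefficient of $\|v\|^2$ on the left; hence $\|v\|^2$ drops out of the inequality $\phi(v,z)\le\lambda\phi(v,x)+(1-\lambda)\phi(v,y)+a$. Collecting the inner-product terms and moving all remaining constants to one side, what is left is
\[
2\bigl\langle v,\ \lambda Jx+(1-\lambda)Jy-Jz\bigr\rangle\ \le\ \lambda\|x\|^2+(1-\lambda)\|y\|^2-\|z\|^2+a,
\]
whose right-hand side does not depend on $v$.

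Therefore $D=C\cap H$, where $H:=\{v\in E:\ \langle v,\ \lambda Jx+(1-\lambda)Jy-Jz\rangle\le\gamma\}$ and $\gamma:=\tfrac12\bigl(\lambda\|x\|^2+(1-\lambda)\|y\|^2-\|z\|^2+a\bigr)$. Since $v\mapsto\langle v,\ \lambda Jx+(1-\lambda)Jy-Jz\rangle$ is a continuous linear functional on $E$, the set $H$ is a closed half-space, hence closed and convex, and $C$ is closed and convex by hypothesis; consequently $D$ is closed and convex. I expect no genuine obstacle here: the only subtlety worth flagging is the cancellation of $\|v\|^2$, which is exactly what prevents the constraint from being truly quadratic and thereby makes $D$ convex.
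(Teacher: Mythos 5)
Your argument is correct: the cancellation of the $\|v\|^2$ terms (because $\lambda+(1-\lambda)=1$) reduces the defining inequality to $2\langle v,\lambda Jx+(1-\lambda)Jy-Jz\rangle\le\lambda\|x\|^2+(1-\lambda)\|y\|^2-\|z\|^2+a$, so $D$ is the intersection of $C$ with a closed half-space determined by the functional $\lambda Jx+(1-\lambda)Jy-Jz\in E^*$, hence closed and convex. The paper itself gives no proof of this lemma (it is quoted from \cite{KL2008}), but your argument is the standard one used there, so there is nothing to flag beyond noting that smoothness is indeed what guarantees $J$ is single-valued, which you did.
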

%%%%%%%%%%%%%%%%%%%%%%%%%%%%%%%%%%%%%%%%%%%%%%%%%%%%%%%%
\begin{lemma}\label{lem.prophi}\cite{A1996}
Let $\left\{x_n\right\}$ and $\left\{y_n\right\}$ be two sequences in a uniformly convex and uniformly smooth real Banach space $E$. 
If $\phi(x_n,y_n)\to 0$ and either $\left\{x_n\right\}$ or $\left\{y_n\right\}$ is bounded, then $\left\|x_n-y_n\right\|\to 0$ as $n\to 
\infty$.
\end{lemma}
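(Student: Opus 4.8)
The plan is to split the Lyapunov functional $\phi(x_n,y_n)$ into a ``norm part'' and an ``alignment part'' and then reduce the claim to the sequential definition of uniform convexity of $E$. First, the left inequality in $(\ref{eq:proPhi})$ gives $(\|x_n\|-\|y_n\|)^2\le\phi(x_n,y_n)\to 0$, hence $\|x_n\|-\|y_n\|\to 0$; since one of the two sequences is bounded, so is the other, say $\|x_n\|,\|y_n\|\le M$ for all $n$. Moreover, from the elementary identity $\phi(x_n,y_n)=(\|x_n\|-\|y_n\|)^2+2\bigl(\|x_n\|\,\|y_n\|-\langle x_n,Jy_n\rangle\bigr)$ together with $\langle x_n,Jy_n\rangle\le\|x_n\|\,\|y_n\|$ (so the bracket is nonnegative), we also get $\|x_n\|\,\|y_n\|-\langle x_n,Jy_n\rangle\to 0$. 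Thus the hypothesis decomposes into $\|x_n\|-\|y_n\|\to 0$ and $\langle x_n,Jy_n\rangle/(\|x_n\|\,\|y_n\|)\to 1$ on the part of the sequences bounded away from the origin.

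Next I would argue by contradiction. Suppose $\|x_n-y_n\|\not\to 0$; then there exist $\epsilon_0>0$ and a subsequence, still written $\{x_n\},\{y_n\}$, with $\|x_n-y_n\|\ge\epsilon_0$ for all $n$. Since $\epsilon_0\le\|x_n\|+\|y_n\|$ and $\|x_n\|-\|y_n\|\to 0$, both norms are eventually bounded below, say $\|x_n\|,\|y_n\|\ge\delta>0$; in particular the degenerate situation $x_n\to 0$ or $y_n\to 0$ cannot occur along this subsequence, so we may set $a_n=x_n/\|x_n\|$ and $b_n=y_n/\|y_n\|$ on the unit sphere (here smoothness of $E$ makes $J$ single-valued, so $Jb_n$ is the unique norm-one support functional of $b_n$). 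Then $\langle a_n,Jb_n\rangle=\langle x_n,Jy_n\rangle/(\|x_n\|\,\|y_n\|)\to 1$, whence $2\ge\|a_n+b_n\|\ge\langle a_n+b_n,Jb_n\rangle=\langle a_n,Jb_n\rangle+1\to 2$, so $\|a_n+b_n\|\to 2$ while $\|a_n\|=\|b_n\|=1$. By the sequential form of uniform convexity of $E$ this forces $\|a_n-b_n\|\to 0$, and therefore $\|x_n-y_n\|\le\|x_n\|\,\|a_n-b_n\|+\bigl|\,\|x_n\|-\|y_n\|\,\bigr|\le M\|a_n-b_n\|+o(1)\to 0$, contradicting $\|x_n-y_n\|\ge\epsilon_0$. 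Hence $\|x_n-y_n\|\to 0$.

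The only genuinely delicate points are the simultaneous extraction of the norm-gap estimate \emph{and} the alignment estimate from $\phi(x_n,y_n)\to 0$, and the exclusion of the degenerate case in which the sequences collapse to $0$ (where the normalization above would be impossible); both are dispatched by the elementary estimates indicated, and everything else is a direct appeal to the definition of uniform convexity (in fact only smoothness, not uniform smoothness, of $E$ is used). A shorter alternative would be to invoke the standard Kamimura--Takahashi/Alber fact that in a smooth, uniformly convex space, for each $r>0$ there is a strictly increasing continuous convex function $g\colon[0,2r]\to[0,\infty)$ with $g(0)=0$ and $g(\|x-y\|)\le\phi(x,y)$ for all $x,y$ in the ball of radius $r$; taking $r=M$ and using continuity of $g^{-1}$ at $0$ then yields $\|x_n-y_n\|\to 0$ at once.
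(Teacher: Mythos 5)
Your proof is correct, but note that the paper itself gives no argument for Lemma \ref{lem.prophi}: it is stated with only a citation to Alber (the result is usually attributed to Kamimura and Takahashi), so there is no in-paper proof to compare against. Your route --- splitting $\phi(x_n,y_n)$ via the identity $\phi(x_n,y_n)=(\|x_n\|-\|y_n\|)^2+2(\|x_n\|\,\|y_n\|-\langle x_n,Jy_n\rangle)$ into a norm-gap term controlled by $(\ref{eq:proPhi})$ and a nonnegative alignment term, then normalizing along a putative bad subsequence and invoking the sequential characterization of uniform convexity --- is sound at every step: the lower bound $\|x_n\|,\|y_n\|\ge\delta>0$ along the subsequence (needed to normalize) is correctly extracted from $\|x_n\|+\|y_n\|\ge\|x_n-y_n\|\ge\epsilon_0$ together with $\|x_n\|-\|y_n\|\to 0$, the positive homogeneity of $J$ justifies $\langle a_n,Jb_n\rangle=\langle x_n,Jy_n\rangle/(\|x_n\|\,\|y_n\|)$, and the support-functional estimate $\|a_n+b_n\|\ge\langle a_n+b_n,Jb_n\rangle\to 2$ correctly triggers uniform convexity. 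The standard proof in the literature is essentially your ``shorter alternative'': one first establishes the quantitative inequality $g(\|x-y\|)\le\phi(x,y)$ on bounded sets for a strictly increasing continuous $g$ with $g(0)=0$, and the lemma falls out immediately; your direct argument buys a self-contained proof that avoids constructing $g$, at the cost of the subsequence/contradiction bookkeeping, and it correctly isolates that only smoothness of $E$ (single-valuedness of $J$) plus uniform convexity are actually used, not uniform smoothness.
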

\begin{lemma}\cite{CKW2010}\label{lem:PK2008}
Let E be a uniformly convex Banach space, $r$ be a positive number and $B_r(0) \subset E $ be a closed ball with center at origin and 
radius $r$. Then, for any given subset 
$\left\{x_1,x_2,\ldots,x_N\right\}\subset B_r(0)$ and for any positive numbers $\lambda_1,\lambda_2,\ldots,\lambda_N$ with 
$\sum_{i=1}^N\lambda_i=1$, there exists a continuous, strictly increasing, and convex function $g:[0,2r)\to [0,\infty)$ with $g(0)=0$ 
such that, for any $i,j\in \left\{1,2,\ldots,N\right\}$ with $i<j$,
$$
\left\|\sum_{k=1}^N\lambda_kx_k\right\|^2\le\sum_{k=1}^N \lambda_k \left\|x_k \right\|^2-\lambda_i\lambda_j g(||x_i-x_j||).
$$
\end{lemma}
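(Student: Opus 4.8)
The plan is to deduce this $N$-term estimate from the classical two-term inequality valid in every uniformly convex space, which I regard as the substantive ingredient. Recall that for each $r>0$ there is a continuous, strictly increasing, convex function $g:[0,2r)\to[0,\infty)$ with $g(0)=0$, depending only on $r$ and $E$, such that
\[
\|\lambda x+(1-\lambda)y\|^2\le\lambda\|x\|^2+(1-\lambda)\|y\|^2-\lambda(1-\lambda)\,g(\|x-y\|)
\]
for all $x,y\in B_r(0)$ and all $\lambda\in[0,1]$. I would take this $g$ as the function asserted in the lemma; note that it depends neither on the points $x_k$ nor on the weights $\lambda_k$, which is slightly stronger than the statement requires.

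For the reduction I would fix $i<j$ and set $\alpha:=\lambda_i+\lambda_j$. If $N=2$, then $\alpha=1$, $\sum_{k}\lambda_kx_k=\lambda_ix_i+\lambda_jx_j$, and the conclusion is exactly the two-term inequality with $\lambda=\lambda_i$. If $N\ge3$, then $0<\alpha<1$ (all weights being positive), and I would set
\[
w:=\frac{\lambda_i}{\alpha}x_i+\frac{\lambda_j}{\alpha}x_j,\qquad v:=\sum_{k\notin\{i,j\}}\frac{\lambda_k}{1-\alpha}x_k,
\]
so that $w,v\in B_r(0)$ by convexity of the ball and $\sum_{k=1}^N\lambda_kx_k=\alpha w+(1-\alpha)v$. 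Applying the two-term inequality to $\alpha w+(1-\alpha)v$ and discarding the nonnegative term $\alpha(1-\alpha)g(\|w-v\|)$ gives $\big\|\sum_k\lambda_kx_k\big\|^2\le\alpha\|w\|^2+(1-\alpha)\|v\|^2$; applying it once more to $w$ (with parameter $\lambda_i/\alpha$) gives $\|w\|^2\le\frac{\lambda_i}{\alpha}\|x_i\|^2+\frac{\lambda_j}{\alpha}\|x_j\|^2-\frac{\lambda_i\lambda_j}{\alpha^2}g(\|x_i-x_j\|)$; and convexity of $x\mapsto\|x\|^2$ gives $\|v\|^2\le\sum_{k\notin\{i,j\}}\frac{\lambda_k}{1-\alpha}\|x_k\|^2$. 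Substituting these two bounds and simplifying yields
\[
\Big\|\sum_{k=1}^N\lambda_kx_k\Big\|^2\le\sum_{k=1}^N\lambda_k\|x_k\|^2-\frac{\lambda_i\lambda_j}{\alpha}\,g(\|x_i-x_j\|)\le\sum_{k=1}^N\lambda_k\|x_k\|^2-\lambda_i\lambda_j\,g(\|x_i-x_j\|),
\]
the last step using $\alpha=\lambda_i+\lambda_j\le1$. This is the desired inequality.

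I expect the only genuinely delicate point to be the two-term inequality itself, i.e.\ producing a single $g$ that is simultaneously continuous, strictly increasing, convex, vanishing at $0$, and valid uniformly over $B_r(0)$. The function one reads off directly from the modulus of convexity $\delta_E$ is positive on $(0,2r)$ and non-decreasing but need not be convex or strictly increasing, so one has to replace it by a suitable convex, strictly increasing minorant. This regularization is classical, so in the paper I would simply cite it (as the authors do); everything after that is the elementary grouping-and-Jensen computation carried out above, and the small cases ($N=2$, or some $\lambda_k$ close to $0$) cause no trouble because all weights are positive and $\lambda_i\lambda_j/\alpha\ge\lambda_i\lambda_j$.
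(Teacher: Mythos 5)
The paper offers no proof of this lemma: it is quoted verbatim from \cite{CKW2010}, so there is no in-paper argument to compare against. Your derivation is correct and is essentially the standard one: all the substance lives in the classical two-point inequality (Xu's inequality in uniformly convex spaces), and your grouping of $\lambda_i x_i+\lambda_j x_j$ against the remaining terms, followed by convexity of $\|\cdot\|^2$ and the observation that $\lambda_i\lambda_j/(\lambda_i+\lambda_j)\ge\lambda_i\lambda_j$, reduces the $N$-point case to it cleanly; you are also right that the only delicate point is manufacturing a single $g$ that is simultaneously convex, strictly increasing and uniform over $B_r(0)$, which is properly delegated to the cited literature.
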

\begin{definition}\label{def.monotone}
A mapping $A:E \to E^*$ is called
\begin{itemize}
\item  [$1)$] monotone, if 
$$
\left\langle A(x)-A(y), x-y\right\rangle  \ge 0\quad \forall x,y \in E;
$$
% \item [$2)$] maximal monotone, if it is monotone and its graph is not the right part of the graph of any other monotone operator;
% \item [$3)$] $m$-monotone, if it is monotone and $R(A+\alpha I) = E^*$ for all $\alpha >0,$ where $I$ is the identity operator in 
%$E^*;$
\item [$2)$] uniformly monotone, if there exists a strictly increasing function $\psi : [0, \infty)$ $\to$ $[0, \infty),  \psi(0) = 0,$ 
such that 
\begin {equation}\label{eq:unifaccre}
\left\langle A(x)-A(y), x-y\right\rangle  \ge \psi(||x - y||) \quad \forall x,y \in E;
\end {equation}
\item [$3)$] $\eta$-strongly monotone, if there exists a positive constant $\eta,$ such that in $(\ref{eq:unifaccre})$, $\psi(t) = \eta t^2;$
\item [$4)$] $\alpha$-inverse strongly monotone, if there exists a positive constant $\alpha,$ such that
$$
\left\langle A(x)-A(y), x-y\right\rangle  \ge \alpha||A(x) - A(y)||^2 \quad \forall x,y \in E.
$$
\item [5)] $L$-Lipschitz continuous if there exists a positive constant $L$, such that 
$$
||A(x)-A(y)||\le L||x-y|| \quad \forall x,y\in E.
$$
\end{itemize}
\end{definition}
\begin{rm}
If $A$ is $\alpha$-inverse strongly monotone then it is $\frac{1}{\alpha}$-Lipschitz continuous. If $A$ is $\eta$-strongly monotone 
and $L$-Lipschitz continuous then it is $\frac{\eta}{L^2}$-inverse strongly monotone.
\end{rm}
\begin{lemma}\label{lem:T2000}\cite{T2000}
 Let $C$ be a nonempty, closed convex subset of a Banach space $E$ and $A$ be a monotone, hemicontinuous mapping of $C$ into $E^*$. Then
$$
VI(C,A)=\left\{u\in C:\left\langle v-u,A(v)\right\rangle\ge 0, \quad \forall v\in C\right\}.
$$
\end{lemma}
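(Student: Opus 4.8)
This is the classical Minty lemma, so the plan is to establish the two set inclusions separately; the first uses only monotonicity of $A$, while the second uses convexity of $C$ together with the hemicontinuity of $A$.

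For the inclusion $VI(C,A)\subseteq\{u\in C:\langle v-u,Av\rangle\ge 0\ \ \forall v\in C\}$, I would take $u\in VI(C,A)$, so that $\langle Au,v-u\rangle\ge 0$ for every $v\in C$. Fixing an arbitrary $v\in C$, monotonicity of $A$ gives $\langle Av-Au,v-u\rangle\ge 0$, whence $\langle v-u,Av\rangle\ge\langle v-u,Au\rangle\ge 0$. Since $v\in C$ was arbitrary, $u$ lies in the right-hand set.

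For the reverse inclusion, let $u\in C$ satisfy $\langle v-u,Av\rangle\ge 0$ for all $v\in C$, and fix any $w\in C$. By convexity of $C$, the point $v_t:=(1-t)u+tw=u+t(w-u)$ belongs to $C$ for every $t\in(0,1]$. Applying the hypothesis with $v=v_t$ and using $v_t-u=t(w-u)$ yields $t\langle w-u,Av_t\rangle\ge 0$, hence $\langle w-u,Av_t\rangle\ge 0$ for all $t\in(0,1]$. Letting $t\to 0^+$, the point $v_t$ moves to $u$ along the segment, and hemicontinuity of $A$ — that is, continuity of $t\mapsto Av_t$ from $[0,1]$ into $E^*$ equipped with its weak-$*$ topology, equivalently continuity of $t\mapsto\langle z,Av_t\rangle$ for each fixed $z\in E$ — forces $\langle w-u,Av_t\rangle\to\langle w-u,Au\rangle$. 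Passing to the limit gives $\langle Au,w-u\rangle\ge 0$, and since $w\in C$ is arbitrary, $u\in VI(C,A)$.

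The only delicate point, which is nonetheless routine, is this last passage to the limit: one must invoke precisely the weak-$*$ form of hemicontinuity to justify $\langle w-u,Av_t\rangle\to\langle w-u,Au\rangle$ as $t\downarrow 0$, rather than any stronger (norm) continuity which is not assumed. Note that no completeness, smoothness, strict convexity, or reflexivity of $E$ enters the argument; the result holds in an arbitrary Banach space.
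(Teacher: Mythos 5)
Your proof is correct: it is the standard Minty-type argument, and the paper itself states this lemma without proof, simply citing Takahashi's book, where essentially the same two-inclusion argument (monotonicity for the forward inclusion, convexity plus hemicontinuity along the segment $u+t(w-u)$ for the reverse) is given. Your remark that only the weak-$*$ form of hemicontinuity is needed in the limit $t\downarrow 0$, and that no reflexivity or smoothness of $E$ enters, is accurate and consistent with the generality in which the paper invokes the lemma.
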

Let $C$ be a nonempty closed convex subset of  a smooth, strictly convex, and reflexive Banach space $E$,  $T:C\to C$ be a 
mapping. The set
$$
F(T)=\left\{x\in E:Tx=x\right\}
$$
is called the set of fixed points of $T$. A point $p\in C$ is said to be an asymptotic fixed point of $T$ if there exists a sequence 
$\left\{x_n\right\} \subset C$ such that $x_n \rightharpoonup p$ and $\left\|x_n-Tx_n \right\|\to 0$ as 
$n\to +\infty$. The set of all asymptotic fixed points of $T$ will be denoted by 	 $\tilde{F}(T)$.
\begin{definition} \label{def.RNM}
A mapping $T:C\to C$ is called
\begin{itemize}
\item [i)] relatively nonexpansive mapping if $F(T)\ne \emptyset, \tilde{F}(T) = F(T)$, and $$ \phi(p,Tx)\le \phi (p,x), \forall p\in F(T),  \forall x\in C
;$$
\item [ii)] closed if for any sequence $\left\{x_n\right\} \subset C, x_n \to x$ and $Tx_n \to y$, then $Tx=y$;
\item [iii)] quasi $\phi$ - nonexpansive mapping (or hemi-relatively nonexpansive mapping) if $F(T) \ne \emptyset$ and $$ \phi(p,Tx)\le 
\phi (p,x), \forall p\in F(T), \forall x\in C;$$
\item [iv)] quasi $\phi$ - asymptotically nonexpansive if $F(T)\ne \O$ and there exists a sequence 
$\left\{k_n\right\} \subset [1,+\infty)$ with $k_n \to 1$ as $n\to +\infty$ such that $$\phi(p,T^n x)\le k_n\phi (p,x), \forall n\ge 1, \forall p\in 
F(T), \forall x\in C; $$
\item [v)] uniformly $L$ - Lipschitz continuous, if there exists a constant $L>0$ such that
$$
\left\|T^n x-T^n y \right\|\le L\left\|x- y \right\|, \forall n \geq 1 ,  \forall x, y \in C.
$$
\end{itemize}
\end{definition}
The reader is refered to \cite{CKW2010,SWX2009} for examples of closed and asymptotically quasi $\phi$-nonexpansive mappings. 
%%%%%%%%%%%%%%%%%%%%%%%%%%%%%%%%%%%%%%%%%%%%%%%
It has been shown that the class of asymptotically quasi $\phi$-nonexpansive mappings contains properly the class of quasi $\phi$-nonexpansive
 mappings, and the class of quasi $\phi$-nonexpansive mappings contains the class of relatively nonexpansive mappings as a proper subset.
 %%%%%%%%%%%%%%%%%%%%%%%%%%%%%%%%%%%%%%%%%%%%
\begin{lemma} \cite{CKW2010} \label{lem.F-CloseConvex}
Let $E$ be a real uniformly smooth and strictly convex Banach space with 
Kadec-Klee property, and $C$ be a nonempty closed convex subset of $E$. Let $T:C\to C$ be a closed and quasi $\phi$-asymptotically 
nonexpansive mapping with a sequence $\left\{k_n\right\}\subset [1,+\infty), k_n\to 1$. Then $F(T)$ is a closed 
convex subset of $C$.
\end{lemma}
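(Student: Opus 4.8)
The plan is to prove the two halves of the conclusion separately: closedness of $F(T)$ will follow immediately from the closedness of $T$, while convexity rests on a short Lyapunov‑functional identity combined with the asymptotic control $k_n\to1$. For closedness, I would take a sequence $\{p_m\}\subset F(T)$ with $p_m\to p$; since $C$ is closed, $p\in C$, and from $Tp_m=p_m\to p$ together with $p_m\to p$ the closedness of $T$ forces $Tp=p$, so $p\in F(T)$.

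For convexity, fix $p,q\in F(T)$ and $t\in(0,1)$, and set $w=tp+(1-t)q\in C$. The first step is the purely algebraic identity, valid for every $z\in E$ and obtained by expanding the definition of $\phi$,
$$t\,\phi(p,z)+(1-t)\,\phi(q,z)-\phi(w,z)=t\|p\|^2+(1-t)\|q\|^2-\|w\|^2=:M,$$
where $M\ge0$ by convexity of $\|\cdot\|^2$ and, crucially, $M$ does not depend on $z$. Putting $z=w$ and using $\phi(w,w)=0$ gives $t\,\phi(p,w)+(1-t)\,\phi(q,w)=M$; putting $z=T^{n}w$ and using $\phi(p,T^{n}w)\le k_n\phi(p,w)$ and $\phi(q,T^{n}w)\le k_n\phi(q,w)$ gives
$$0\le\phi(w,T^{n}w)=t\,\phi(p,T^{n}w)+(1-t)\,\phi(q,T^{n}w)-M\le k_nM-M=(k_n-1)M,$$
so $\phi(w,T^{n}w)\to0$ as $k_n\to1$. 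The second step upgrades this to strong convergence: from $(\ref{eq:proPhi})$ we get $(\|w\|-\|T^{n}w\|)^2\le\phi(w,T^{n}w)\to0$, hence $\|T^{n}w\|\to\|w\|$ and $\{T^{n}w\}$ is bounded, so Lemma \ref{lem.prophi} yields $\|w-T^{n}w\|\to0$, i.e. $T^{n}w\to w$. Then also $T^{n+1}w=T(T^{n}w)\to w$, and applying the closedness of $T$ to the sequence $T^{n}w\to w$, $T(T^{n}w)\to w$ gives $Tw=w$, so $w\in F(T)$.

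The computations are routine; I expect the only delicate point to be the passage from $\phi(w,T^{n}w)\to0$ to the strong convergence $T^{n}w\to w$ — this is precisely where strict convexity, reflexivity (via uniform smoothness) and the Kadec--Klee property of $E$ are used, either through Lemma \ref{lem.prophi} or through a direct argument noting that $\|T^nw\|\to\|w\|$ together with $\phi(w,T^nw)\to0$ forces $\langle w,JT^{n}w\rangle\to\|w\|^2$ and then extracting a weakly convergent subsequence — after which one must take care to invoke closedness of $T$ on both $\{T^{n}w\}$ and its shift $\{T^{n+1}w\}$ in order to identify the common limit $w$ as the fixed point.
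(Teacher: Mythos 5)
The paper itself gives no proof of this lemma; it is quoted verbatim from \cite{CKW2010}. Your argument is correct and is essentially the standard proof from that reference: closedness of $F(T)$ follows directly from the closedness of $T$, and convexity follows from the $z$-independent identity $t\phi(p,z)+(1-t)\phi(q,z)-\phi(w,z)=M\ge 0$, which with $z=w$ and $z=T^nw$ yields $0\le\phi(w,T^nw)\le(k_n-1)M\to 0$, after which one passes to $T^nw\to w$ and applies closedness of $T$ to the pair of sequences $\{T^nw\}$, $\{T^{n+1}w\}$.

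The one point to fix is the appeal to Lemma \ref{lem.prophi}: as stated it requires $E$ to be uniformly convex and uniformly smooth, whereas the present lemma only assumes $E$ uniformly smooth, strictly convex, and with the Kadec--Klee property, so that lemma is not directly available. Your fallback direct argument is the right one and does close the gap: from $(\ref{eq:proPhi})$ one gets $\|T^nw\|\to\|w\|$, hence $\langle w,JT^nw\rangle\to\|w\|^2$; boundedness of $\{JT^nw\}$ in the uniformly convex dual $E^*$ gives a weakly convergent subsequence whose limit is forced to equal $Jw$ (equality in $\langle w,f^*\rangle\le\|w\|\,\|f^*\|$ plus smoothness of $E$), the Kadec--Klee property of $E^*$ upgrades this to $JT^{n_k}w\to Jw$ in norm, the norm-to-weak$^*$ continuity of $J^{-1}$ gives $T^{n_k}w\rightharpoonup w$, and the Kadec--Klee property of $E$ then gives strong convergence; a subsequence argument extends this to the full sequence. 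With that substitution made explicit, the proof is complete and matches the cited source's approach.
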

%%%%%%%%%%%%%%%%%%%%%%%%%%%%%%%%%%%%%%%%%%%%%%%%%%%%
Next, for solving the equilibrium problem $(\ref{eq:EP})$, we assume that the bifunction $f$ satisfies the following conditions:
\begin{enumerate}
\item[(A1)] $f(x,x)=0$ for all $x\in C$;
\item[(A2)] $f$ is monotone, i.e., $f(x,y)+f(y,x)\le 0$ for all $x, y \in C$;
\item[(A3)] For all $x,y,z \in C$,
$$ \lim_{t\to 0^+}\sup f(tz+(1-t)x,y) \le f(x,y); $$
\item[(A4)] For all $x\in C$, $f(x,.)$ is convex and lower semicontinuous.
\end{enumerate}
The following results show that in a smooth (uniformly smooth), strictly convex and reflexive Banach space, the regularized equilibrium 
problem has a solution (unique solution), respectively. 
\begin{lemma}\label{ExitenceN0} \cite{TZ2009} Let $C$ be a
closed and convex subset of a smooth, strictly convex and reflexive
Banach space $E$, $f$ be a bifunction from $C\times C$ to
$\mathbb{R}$ satisfying conditions $(A1)$-$(A4)$ and let $r>0$,
$x\in E$. Then there exists $z\in C$ such that
\begin{eqnarray*}
f(z,y)+\frac{1}{r}\langle y-z,Jz-Jx\rangle\geq0, \quad \forall y\in
C.
\end{eqnarray*}
\end{lemma}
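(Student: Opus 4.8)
\medskip
\noindent\emph{Proof strategy.} Set $a:=x$. The plan is to obtain $z$ as an equilibrium point of the perturbed bifunction
\[
\Phi(z,y):=f(z,y)+\tfrac1r\langle y-z,\,Jz-Ja\rangle,\qquad z,y\in C,
\]
since the inequality $\Phi(z,y)\ge 0$ for all $y\in C$ is exactly the conclusion. First I would check that $\Phi$ still satisfies (A1)--(A4); only the extra term $\varphi(z,y):=\tfrac1r\langle y-z,Jz-Ja\rangle$ needs care. It is affine (hence convex and continuous) in $y$, vanishes on the diagonal, and
\[
\varphi(z,y)+\varphi(y,z)=\tfrac1r\langle y-z,\,Jz-Jy\rangle=-\tfrac1r\langle z-y,\,Jz-Jy\rangle\le 0
\]
by monotonicity of $J$, so (A1), (A2), (A4) transfer to $\Phi=f+\varphi$. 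For (A3) I would use that on a smooth, strictly convex, reflexive space $J$ is norm-to-weak${}^{*}$ continuous and norm-preserving: if $z_t:=tw+(1-t)z\to z$ in norm then $\|Jz_t\|$ stays bounded and $Jz_t\to Jz$ in the weak${}^{*}$ topology of $E^{*}$, whence $\varphi(z_t,y)\to\varphi(z,y)$; together with (A3) for $f$ this gives (A3) for $\Phi$.

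Next I would establish a coercivity estimate. Fix any $y_0\in C$. Since $f(y_0,\cdot)$ is proper, convex and lower semicontinuous on the closed convex set $C$, it admits a continuous affine minorant, so $f(z,y_0)\le-f(y_0,z)\le c_1\|z\|+c_2$; using $\langle z-y_0,Jz-Jy_0\rangle\ge(\|z\|-\|y_0\|)^2$ one also gets $\varphi(z,y_0)\le-\tfrac1r(\|z\|-\|y_0\|)^2+c_3(\|z\|+\|y_0\|)$. Adding these, $\Phi(z,y_0)\to-\infty$ as $\|z\|\to\infty$, so there is $R_0>\|y_0\|$ with $\Phi(z,y_0)<0$ whenever $z\in C$ and $\|z\|>R_0$.

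Then I would run the Ky Fan KKM argument on a ball and afterwards remove the ball. Fix $R>R_0$ and put $C_R:=C\cap\overline{B_R(0)}$, which is nonempty, closed, bounded and convex, hence weakly compact since $E$ is reflexive. For $y\in C_R$ set $G(y):=\{z\in C_R:\Phi(z,y)\ge 0\}$ and $H(y):=\{z\in C_R:\Phi(y,z)\le 0\}$. Monotonicity of $\Phi$ gives $G(y)\subseteq H(y)$; from (A1) and convexity of $\Phi(y,\cdot)$ one checks $\mathrm{conv}\{y_1,\dots,y_m\}\subseteq\bigcup_iG(y_i)$, so $G$, and a fortiori $H$, is a KKM map; and by (A4) each $H(y)$ is convex and weakly closed, hence weakly compact. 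Ky Fan's lemma then yields a point $z_R\in\bigcap_{y\in C_R}H(y)$. The standard segment trick — for $y\in C_R$ and $t\in(0,1]$ the point $y_t:=ty+(1-t)z_R$ lies in $C_R$, and $0=\Phi(y_t,y_t)\le t\Phi(y_t,y)+(1-t)\Phi(y_t,z_R)\le t\Phi(y_t,y)$ since $z_R\in H(y_t)$, so $\Phi(y_t,y)\ge 0$, and $t\to0^{+}$ with (A3) gives $\Phi(z_R,y)\ge 0$ — shows $z_R\in\bigcap_{y\in C_R}G(y)$, i.e. $\Phi(z_R,y)\ge0$ for every $y\in C_R$. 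In particular $\Phi(z_R,y_0)\ge0$, so the coercivity estimate forces $\|z_R\|\le R_0<R$. Finally, for an arbitrary $y\in C$ the point $y_t:=(1-t)z_R+ty$ lies in $C_R$ for all sufficiently small $t>0$, hence $0\le\Phi(z_R,y_t)\le(1-t)\Phi(z_R,z_R)+t\Phi(z_R,y)=t\Phi(z_R,y)$, giving $\Phi(z_R,y)\ge0$. Thus $z:=z_R$ is the required solution.

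I expect the main obstacles to be the first two steps: verifying that the duality-map perturbation $\varphi$ does not destroy (A1)--(A4) (the continuity claim in (A3) rests on norm-to-weak${}^{*}$ continuity of $J$), and, above all, squeezing out the coercivity estimate, which is precisely what lets one pass from the weakly compact slices $C_R$ back to the possibly unbounded set $C$. Once these are in place, the last two steps are the classical Ky Fan / Blum--Oettli machinery and could instead be dispatched by citing the Blum--Oettli existence theorem directly.
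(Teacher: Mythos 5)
The paper offers no proof of this lemma: it is quoted verbatim from Takahashi and Zembayashi \cite{TZ2009}, who in turn derive it from the Blum--Oettli existence theorem for equilibrium problems. Your argument is therefore not a variant of ``the paper's proof'' but a correct, essentially self-contained reconstruction of the argument that sits behind the citation, and I found no gap in it. The transfer of (A1), (A2), (A4) to $\Phi=f+\varphi$ with $\varphi(z,y)=\tfrac{1}{r}\langle y-z,Jz-Jx\rangle$ is right (monotonicity of $J$ gives (A2), affineness in $y$ gives (A4)), and for (A3) the norm-to-weak$^{*}$ continuity of $J$ on a smooth space, combined with the boundedness $\|Jz_t\|=\|z_t\|$, does give $\varphi(z_t,y)\to\varphi(z,y)$ along norm-convergent segments. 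The coercivity estimate is the one genuinely substantive ingredient beyond the bounded case: the inequality $\langle z-y_0,Jz-Jy_0\rangle\ge(\|z\|-\|y_0\|)^2$ produces the $-\|z\|^2/r$ term that dominates the linear bound $f(z,y_0)\le -f(y_0,z)\le c_1\|z\|+c_2$ obtained from an affine minorant of the proper convex lsc function $f(y_0,\cdot)$, and this is precisely the coercivity hypothesis under which the Blum--Oettli theorem applies on an unbounded $C$. The Ky Fan/Minty passage from $\bigcap_y H(y)$ to $\bigcap_y G(y)$ on the weakly compact slice $C_R$, and the final removal of the ball via $\|z_R\|\le R_0<R$, are the standard machinery carried out correctly. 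The only remark is the one you make yourself: once (A1)--(A4) and the coercivity of $\Phi$ are verified, one may simply cite Blum--Oettli, which is effectively what the paper (via \cite{TZ2009}) does; your longer route buys self-containedness at no mathematical cost.
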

\begin{lemma}\label{Tr-ClosedConvex}\cite{TZ2009} Let
$C$ be a closed and convex subset of a uniformly smooth, strictly
convex and reflexive Banach spaces $E$, $f$ be a bifunction from
$C\times C$ to $\mathbb{R}$ satisfying conditions $(A1)$-$(A4)$.
For all $r>0$ and $x\in E$, define the mapping
\begin{eqnarray*}
T_rx=\{z\in C:f(z,y)+\frac{1}{r}\langle y-z,Jz-Jx\rangle\geq0, \quad
\forall y\in C\}.
\end{eqnarray*}
Then the following hold:

{\rm (B1)} $T_r$ is single-valued;

{\rm (B2)} $T_r$ is a firmly nonexpansive-type mapping, i.e., for
all $x, y\in E,$
\begin{eqnarray*}
\langle T_rx-T_ry,JT_rx-JT_ry\rangle\leq\langle T_rx-T_ry,Jx-Jy\rangle;
\end{eqnarray*}

{\rm (B3)} $F(T_r)=\hat{F}(T_r)=EP(f);$

{\rm (B4)} $EP(f)$ is closed and convex and $T_r$ is relatively nonexpansive mapping.
\end{lemma}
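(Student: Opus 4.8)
The plan is to establish (B1)--(B4) in order, using only the existence result in Lemma~\ref{ExitenceN0}, the monotonicity hypothesis (A2), and the three-point identity $(\ref{eq:LFP})$. For (B1), fix $x\in E$ and suppose $z_1,z_2\in C$ both satisfy the inequality defining $T_rx$. Substituting $y=z_2$ into the inequality for $z_1$ and $y=z_1$ into the inequality for $z_2$, adding the two, and discarding the sum $f(z_1,z_2)+f(z_2,z_1)\le 0$ (by (A2)), I obtain $\langle z_1-z_2,\,Jz_1-Jz_2\rangle\le 0$. Since $2\langle z_1-z_2,Jz_1-Jz_2\rangle=\phi(z_1,z_2)+\phi(z_2,z_1)\ge 0$, both Lyapunov terms must vanish, and Lemma~\ref{lem.proProjec}(iii) gives $z_1=z_2$; together with Lemma~\ref{ExitenceN0}, $T_r$ is a well-defined single-valued map of $E$ into $C$. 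For (B2), put $z_1=T_rx$, $z_2=T_ry$; the same device --- test the inequality for $z_1$ at $y=z_2$, the one for $z_2$ at $y=z_1$, add, and cancel the $f$-terms by (A2) --- yields $\langle z_2-z_1,Jz_1-Jx\rangle+\langle z_1-z_2,Jz_2-Jy\rangle\ge 0$, which rearranges precisely to $\langle z_1-z_2,Jz_1-Jz_2\rangle\le\langle z_1-z_2,Jx-Jy\rangle$.

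For (B3), the identity $F(T_r)=EP(f)$ is immediate: if $T_rz=z$ the term $\langle y-z,Jz-Jz\rangle$ vanishes, so the defining inequality reads $f(z,y)\ge 0$ for all $y$, i.e., $z\in EP(f)$, and the converse is clear. Since always $F(T_r)\subseteq\hat F(T_r)$, it remains to prove $\hat F(T_r)\subseteq F(T_r)$. Take $p$ and $\{x_n\}$ with $x_n\to p$ and $\|x_n-T_rx_n\|\to0$; writing $z_n=T_rx_n$ we also have $z_n\to p$. From the defining inequality for $z_n$ and (A2), $f(y,z_n)\le\frac1r\langle y-z_n,Jz_n-Jx_n\rangle$ for every $y\in C$; letting $n\to\infty$ and using the uniform continuity of $J$ on bounded sets to get $\|Jz_n-Jx_n\|\to0$ together with the lower semicontinuity of $f(y,\cdot)$ from (A4), we deduce $f(y,p)\le 0$ for all $y\in C$. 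Then, for $y\in C$ and $t\in(0,1]$, the point $y_t:=ty+(1-t)p$ lies in $C$, and by (A1) and the convexity in (A4), $0=f(y_t,y_t)\le tf(y_t,y)+(1-t)f(y_t,p)\le tf(y_t,y)$, so $f(y_t,y)\ge0$; letting $t\to 0^+$ and applying (A3) gives $f(p,y)\ge0$, i.e., $p\in EP(f)=F(T_r)$.

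For (B4), I would first show $T_r$ is quasi $\phi$-nonexpansive: for $p\in EP(f)=F(T_r)$ and $z=T_rx$, substituting $y=p$ into the defining inequality and using $f(p,z)\ge0$ with (A2) gives $f(z,p)\le0$, hence $\langle z-p,Jx-Jz\rangle\ge0$; inserting this into $\phi(p,x)=\phi(p,z)+\phi(z,x)+2\langle z-p,Jx-Jz\rangle$ and dropping the nonnegative quantities $\phi(z,x)$ and $2\langle z-p,Jx-Jz\rangle$ yields $\phi(p,T_rx)\le\phi(p,x)$. Combined with (B3), this is exactly what it means for $T_r$ to be relatively nonexpansive. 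Closedness and convexity of $EP(f)=F(T_r)$ then follow in the usual way: if $p_n\in F(T_r)$ and $p_n\to p$, then $\phi(p_n,T_rp)\le\phi(p_n,p)\to0$, so Lemma~\ref{lem.prophi} forces $p_n\to T_rp$ and thus $T_rp=p$; and if $p_1,p_2\in F(T_r)$, $t\in(0,1)$, $p=tp_1+(1-t)p_2$, $z=T_rp$, then forming the convex combination of $\phi(p_i,z)\le\phi(p_i,p)$, $i=1,2$, and expanding the norms collapses the estimate to $\phi(p,z)\le0$, whence $z=p\in F(T_r)$.

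The one genuinely delicate step is the inclusion $\hat F(T_r)\subseteq F(T_r)$ in (B3): passing to the limit in the variational-type inequality requires the uniform continuity of $J$ on bounded subsets of the uniformly smooth space $E$ (to annihilate $Jz_n-Jx_n$) together with a careful use of the one-sided continuity (A3) through the auxiliary family $y_t$, and this is the place where all four structural conditions (A1)--(A4) on $f$ are genuinely needed. Everything else reduces to routine manipulations of $\phi$ and of the duality pairing.
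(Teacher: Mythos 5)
The paper does not prove this lemma: it is imported verbatim from the cited source \cite{TZ2009}, so there is no internal proof to compare against. Your argument reproduces the standard one from that source, and most of it is sound: the symmetrization trick for (B1) and (B2) (test each defining inequality at the other solution, add, cancel the $f$-terms by (A2)), the identification $F(T_r)=EP(f)$, the $y_t$-argument through (A1), (A3), (A4), and the derivation of $\phi(p,T_rx)+\phi(T_rx,x)\le\phi(p,x)$ from the three-point identity $(\ref{eq:LFP})$ together with the convex-combination computation for convexity of $F(T_r)$ are all correct.

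The one genuine gap is in your proof of $\hat{F}(T_r)\subseteq F(T_r)$. By the paper's definition, $p$ is an asymptotic fixed point when there is a sequence $\{x_n\}\subset C$ with $x_n\rightharpoonup p$ \emph{weakly} and $\|x_n-T_rx_n\|\to 0$; you take $x_n\to p$ strongly, so as written you have only shown that \emph{strong} asymptotic fixed points lie in $F(T_r)$, which does not yield (B3). The step where this matters is the passage to the limit in $f(y,z_n)\le\frac{1}{r}\langle y-z_n,Jz_n-Jx_n\rangle$ with $z_n=T_rx_n$. Under weak convergence the right-hand side still vanishes (weakly convergent sequences are bounded, so $\|Jz_n-Jx_n\|\to 0$ by uniform continuity of $J$ on bounded sets, while $\|y-z_n\|$ stays bounded), but to conclude $f(y,p)\le\liminf_n f(y,z_n)\le 0$ you must invoke the \emph{weak} lower semicontinuity of $f(y,\cdot)$, which follows from (A4) precisely because a convex, lower semicontinuous function on a Banach space is weakly lower semicontinuous. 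This is where the convexity in (A4) is genuinely needed, and it should be said explicitly.

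Two smaller points. First, for the closedness of $F(T_r)$ you appeal to Lemma~\ref{lem.prophi}, which requires $E$ uniformly convex, whereas the present lemma assumes only uniform smoothness, strict convexity and reflexivity; the detour is unnecessary, since $p_n\to p$ strongly gives $\phi(p_n,T_rp)\to\phi(p,T_rp)$ and $\phi(p_n,p)\to 0$, whence $\phi(p,T_rp)\le 0$ and Lemma~\ref{lem.proProjec}(iii) yields $T_rp=p$ directly. Second, the claim that $T_r$ is relatively nonexpansive tacitly presupposes $EP(f)\neq\emptyset$; a word to that effect would make (B4) precise.
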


\begin{lemma}\label{Tr-QPNM}\cite{TZ2009} Let
$C$ be a closed convex subset of a smooth, strictly convex and reflexive Banach space $E$. Let $f$ be a bifunction from $C\times C$
to $\mathbb{R}$ satisfying $(A1)-(A4)$ and let $r > 0$. Then, for
$x\in E$ and $q\in F(T_r)$,
\begin{eqnarray*}\phi(q,T_rx)+\phi(T_rx,x)\leq\phi(q,x).
\end{eqnarray*}
\end{lemma}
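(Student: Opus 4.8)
The plan is to set $z:=T_rx$ and to exploit the variational characterization of $z$ together with the monotonicity condition $(A2)$. By Lemma~\ref{Tr-ClosedConvex}~(B3) we have $q\in F(T_r)=EP(f)$, so $q\in C$ and $f(q,y)\ge 0$ for every $y\in C$; in particular $f(q,z)\ge 0$. On the other hand, by the definition of $T_r$, the point $z\in C$ satisfies
$$
f(z,y)+\frac{1}{r}\langle y-z,Jz-Jx\rangle\ge 0,\qquad \forall y\in C.
$$

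First I would combine these facts with $(A2)$. Monotonicity of $f$ gives $f(z,q)+f(q,z)\le 0$, hence $f(z,q)\le -f(q,z)\le 0$. Substituting $y=q$ into the displayed inequality then yields
$$
\frac{1}{r}\langle q-z,Jz-Jx\rangle\ge -f(z,q)\ge 0,
$$
and since $r>0$ we conclude $\langle q-z,Jz-Jx\rangle\ge 0$, which is the same as $\langle z-q,Jx-Jz\rangle\ge 0$.

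Finally I would invoke the three-point identity $(\ref{eq:LFP})$ with the substitution $(x,y,z)\mapsto(q,x,z)$, namely
$$
\phi(q,x)=\phi(q,z)+\phi(z,x)+2\langle z-q,Jx-Jz\rangle.
$$
Discarding the nonnegative cross term gives $\phi(q,x)\ge \phi(q,z)+\phi(z,x)=\phi(q,T_rx)+\phi(T_rx,x)$, which is exactly the asserted inequality.

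The argument is short, and the only point I would treat with care is the sign bookkeeping when passing between $\langle q-z,Jz-Jx\rangle$ and the cross term $\langle z-q,Jx-Jz\rangle$ occurring in $(\ref{eq:LFP})$; beyond that it is a direct substitution. Note that smoothness and strict convexity of $E$ enter only implicitly, ensuring that $J$ is single-valued and that $T_r$ is well defined via Lemma~\ref{Tr-ClosedConvex}.
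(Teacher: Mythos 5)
Your proof is correct. Note that the paper itself states this lemma only with a citation to \cite{TZ2009} and gives no proof, so there is nothing internal to compare against; measured against the standard argument in that reference, your route is essentially the same: the whole content is the inequality $\langle T_rx-q,\,Jx-JT_rx\rangle\ge 0$ followed by the three-point identity $(\ref{eq:LFP})$. The only (harmless) difference is that you derive that inequality from scratch using the defining inequality of $T_rx$ at $y=q$ together with $(A2)$ and $q\in EP(f)$, whereas one can also get it in one line from the firm nonexpansiveness property {\rm (B2)} of Lemma~\ref{Tr-ClosedConvex} applied with $y=q$ and $T_rq=q$; your version is self-contained and re-proves the relevant special case of {\rm (B2)}. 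The sign bookkeeping you flagged checks out: $\langle q-z,Jz-Jx\rangle=\langle z-q,Jx-Jz\rangle$, which is exactly the cross term in $(\ref{eq:LFP})$ under the substitution $(x,y,z)\mapsto(q,x,z)$.
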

Let $E$ be a real Banach space. Alber \cite{A1996} studied the function $V:E\times E^*\to \mathbb{R}$ defined by
$$
V(x,x^*)=||x||^2-2\left\langle x,x^*\right\rangle+||x^*||^2.
$$
Clearly,  $V(x,x^*)=\phi(x,J^{-1}x^*)$.
\begin{lemma}\label{lem:V-Property}\cite{A1996}
Let E be a refexive, strictly convex and smooth Banach space with $E^*$ as its dual. Then
$$
V(x,x^*)+2\left\langle J^{-1}x-x^*,y^*\right\rangle\le V(x,x^*+y^*), \quad\forall x\in E \quad\mbox{and}\quad \forall x^*,y^*\in E^*.
$$
\end{lemma}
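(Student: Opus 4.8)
\noindent The plan is to reduce the whole statement to the single elementary estimate
\begin{equation}\label{eq:Vplan}
\|x^*+y^*\|^2 \ge \|x^*\|^2 + 2\langle J^{-1}x^*, y^*\rangle \qquad (x^*,y^*\in E^*),
\end{equation}
and then simply expand $V$. (I read the inner product on the left-hand side of the lemma as $\langle J^{-1}x^*-x,\,y^*\rangle$ with the pairing $E\times E^*\to\mathbb{R}$, since this is the only reading consistent with the domains of $J^{-1}$ and of $\langle\cdot,\cdot\rangle$.)

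First I would record the structural facts about $J$ that are needed. Because $E$ is reflexive, strictly convex and smooth, $J=J_E$ is single-valued, bijective and norm-to-weak$^*$ continuous, $J^{-1}:E^*\to E$ is single-valued, and $J^{-1}$ coincides with the normalized duality map $J_{E^*}$ of $E^*$. In particular, writing $u:=J^{-1}x^*$, one has $\langle u,x^*\rangle=\|x^*\|^2=\|u\|^2$. This is exactly what makes the right-hand side of the lemma well defined, and it is the only place where all three hypotheses on $E$ enter.

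Next I would prove \eqref{eq:Vplan}. With $u=J^{-1}x^*$ and $\langle u,x^*\rangle=\|u\|^2=\|x^*\|^2$,
\begin{equation*}
\|x^*\|^2+2\langle u,y^*\rangle = 2\langle u,x^*+y^*\rangle-\|u\|^2 \le 2\|u\|\,\|x^*+y^*\|-\|u\|^2 \le \|x^*+y^*\|^2,
\end{equation*}
using the duality bound $\langle u,\xi\rangle\le\|u\|\,\|\xi\|$ and then Young's inequality $2ab\le a^2+b^2$. Finally, expanding $V(x,\xi)=\|x\|^2-2\langle x,\xi\rangle+\|\xi\|^2$ at $\xi=x^*+y^*$ and inserting \eqref{eq:Vplan},
\begin{align*}
V(x,x^*+y^*) &= \|x\|^2-2\langle x,x^*\rangle-2\langle x,y^*\rangle+\|x^*+y^*\|^2 \\
&\ge \bigl(\|x\|^2-2\langle x,x^*\rangle+\|x^*\|^2\bigr)-2\langle x,y^*\rangle+2\langle J^{-1}x^*,y^*\rangle \\
&= V(x,x^*)+2\langle J^{-1}x^*-x,\,y^*\rangle,
\end{align*}
which is the claim.

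There is no genuinely hard step here: the estimate \eqref{eq:Vplan} is just the subgradient inequality for the convex function $\xi\mapsto\|\xi\|^2$ on $E^*$ (whose subdifferential at $\xi$ equals $2J_{E^*}\xi=2J^{-1}\xi$), and I would in fact prefer the three-line computation above to invoking subdifferential calculus. The only thing requiring attention is bookkeeping --- making sure $J^{-1}$ is single-valued so that the statement even makes sense, and tracking the signs and the regrouping in the last display.
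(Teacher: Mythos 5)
Your proof is correct. The paper itself gives no argument for this lemma --- it is quoted verbatim from Alber \cite{A1996} --- so there is no in-paper proof to compare against; your three-line derivation (the subgradient inequality $\|x^*+y^*\|^2\ge\|x^*\|^2+2\langle J^{-1}x^*,y^*\rangle$ for the convex function $\xi\mapsto\|\xi\|^2$ on $E^*$, obtained here directly from $\langle J^{-1}x^*,x^*\rangle=\|J^{-1}x^*\|^2=\|x^*\|^2$, Cauchy--Schwarz and $2ab\le a^2+b^2$, followed by expanding $V(x,x^*+y^*)$) is exactly the standard argument from Alber's paper, and it is complete. You were also right to read the printed pairing $\langle J^{-1}x-x^*,y^*\rangle$ as the typo it is: $J^{-1}$ maps $E^*$ to $E$, so only $\langle J^{-1}x^*-x,y^*\rangle$ is well formed, and that is the form in which the lemma is actually used later in the paper (e.g.\ in the estimates \eqref{eq:3} and \eqref{eq:13}).
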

Consider the normal cone $N_C$ to a set $C$ at the point $ x \in C$ defined by
$$
N_C(x)=\left\{x^*\in E^*:\left\langle x-y,x^*\right\rangle\ge 0,\quad \forall y\in C\right\}.
$$
We have the following result.
\begin{lemma}\cite{R1970}\label{lem:N-Normal-Cone}
Let $C$ be a nonempty closed convex subset of a Banach space $E$ and let $A$ be a monotone and hemi-continuous mapping of $C$ into 
$E^*$ with $D(A) = C$. 
Let $Q$ be a mapping defined by:
$$
Q(x)=
\left \{
\begin{array}{ll}
Ax+N_C(x)\quad&\mbox{ if }\quad x\in C,\\
\emptyset  \quad& \mbox{if}\quad x\notin C.
\end{array}
\right.
$$
Then $Q$ is a maximal monotone and $Q^{-1}0=VI(A,C)$.
\end{lemma}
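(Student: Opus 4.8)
The statement splits into the easy identity $Q^{-1}0=VI(A,C)$ and the substantial claim that $Q$ is maximal monotone; as this is a classical fact I would in the end cite \cite{R1970} for a general Banach space $E$, but here is how I would organize a self-contained argument in the reflexive setting, which is all that the applications in this paper require. For the identity I would simply unwind the definitions: $0\in Q(x)$ forces $x\in C$ and $-Ax\in N_C(x)$, which by the definition of the normal cone means $\langle x-y,-Ax\rangle\ge 0$ for all $y\in C$, i.e. $\langle Ax,y-x\rangle\ge 0$ for all $y\in C$, i.e. $x\in VI(A,C)$; every implication reverses. That $Q$ is monotone is equally routine: $A$ is monotone by hypothesis, $N_C$ is monotone straight from its definition (pair $N_C(x)$ with $y$, pair $N_C(y)$ with $x$, and add the two inequalities), and a sum of monotone set-valued maps is monotone.

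For maximality I would first observe that both monotonicity and maximality depend only on the graph of $Q$, so I may renorm $E$ --- legitimate since $E$ is reflexive --- to make $E$ and $E^*$ strictly convex and $E$ smooth, whence the normalized duality map $J$ is single-valued, demicontinuous, and strictly monotone. I would then run the classical two-step scheme. \emph{Step 1: reduction to surjectivity.} Suppose $(x_0,f_0)\in E\times E^*$ satisfies $\langle x-x_0,g-f_0\rangle\ge 0$ for every $(x,g)$ in the graph of $Q$; I want $f_0\in Qx_0$. Granting $R(J+Q)=E^*$, pick $x_1$ with $f_0+Jx_0\in Jx_1+Qx_1$, say $f_0+Jx_0=Jx_1+g_1$ with $g_1\in Qx_1$. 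The assumed inequality applied to $(x_1,g_1)$ gives $\langle x_1-x_0,g_1-f_0\rangle\ge 0$, while $g_1-f_0=Jx_0-Jx_1$; hence $\langle x_1-x_0,Jx_1-Jx_0\rangle\le 0$, which together with the monotonicity of $J$ forces $x_0=x_1$ by strict monotonicity of $J$. Therefore $f_0=g_1\in Qx_1=Qx_0$ --- in particular $x_0\in C$, since $Qx_1\neq\emptyset$.

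\emph{Step 2: surjectivity.} It remains to show $R(J+A+N_C)=E^*$. Fix $f\in E^*$; unwinding $f\in Jx+Ax+N_C(x)$ shows I need $x\in C$ with $\langle (J+A)x-f,\,y-x\rangle\ge 0$ for all $y\in C$, i.e. a solution of the variational inequality on $C$ for $B:=J+A-f$. Here $B$ is monotone (a constant translate of the sum of the monotone maps $J$ and $A$), hemicontinuous ($J$ is demicontinuous, $A$ is hemicontinuous), and coercive: fixing any $x^*\in C$, from $\langle Ax-Ax^*,x-x^*\rangle\ge 0$ and $\langle Jx,x\rangle=\|x\|^2$ one obtains $\langle Bx,\,x-x^*\rangle\ \ge\ \|x\|^2-\|x\|\,\|x^*\|-(\|Ax^*\|+\|f\|)(\|x\|+\|x^*\|)$, so that $\langle Bx,x-x^*\rangle/\|x\|\to+\infty$ as $\|x\|\to\infty$. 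The classical existence theorem for variational inequalities governed by a monotone, hemicontinuous, coercive operator on a closed convex subset of a reflexive Banach space then produces the required $x$, and the proof is complete.

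The only genuinely nontrivial ingredient is the existence theorem used in Step 2. If one does not wish to quote it, I would prove it by a Galerkin argument: solve the variational inequality on each finite-dimensional subspace of $E$ (via Brouwer's fixed point theorem, or the finite-dimensional KKM lemma applied to the sets $\{x:\langle Bx,x-y\rangle\le 0\}$, whose closedness comes from hemicontinuity), invoke coercivity to keep the approximate solutions in a fixed ball, pass to the weak limit of a suitable subnet, and identify that limit as an honest solution through Minty's reformulation of the variational inequality --- exactly the device recorded in Lemma~\ref{lem:T2000}. Everything else is bookkeeping with definitions.
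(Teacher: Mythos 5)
The paper does not prove this lemma at all: it is imported verbatim from Rockafellar \cite{R1970} and used as a black box, so there is no internal argument to compare yours against. Judged on its own terms, your reconstruction is correct and is essentially the classical proof of Rockafellar's theorem on the maximality of $A+N_C$ in the reflexive setting: the identity $Q^{-1}0=VI(A,C)$ and the monotonicity of $Q$ are indeed pure bookkeeping; the renorming step is legitimate (Troyanski's theorem gives an equivalent norm making $E$ and $E^*$ strictly convex and smooth, and both monotonicity and maximality depend only on the duality pairing, not on the norm); Step 1 is the standard Minty-type reduction of maximality to surjectivity of $J+Q$, and your use of strict monotonicity of $J$ to force $x_0=x_1$ is exactly right; Step 2 correctly identifies surjectivity of $J+A+N_C$ with solvability of a variational inequality for the monotone, hemicontinuous, coercive operator $B=J+A-f$, and your coercivity estimate is sound. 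The restriction to reflexive $E$ is harmless here, since the paper only ever invokes the lemma in uniformly smooth, $2$-uniformly convex spaces. The one honest caveat you already acknowledge: the entire weight of the proof rests on the Browder--Hartman--Stampacchia existence theorem in Step 2 (or equivalently on the Galerkin/KKM argument you sketch, where Minty's trick from Lemma \ref{lem:T2000} is what lets you identify the weak limit), so a referee would reasonably ask you either to cite that theorem precisely or to write out the finite-dimensional approximation in full; as it stands your proof is a correct architecture with one load-bearing external beam, which is a perfectly respectable way to present a result the paper itself simply cites.
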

%%%%%%%%%%%%%%%%%%%%%%%%%%%%%%%%%%%%%%%%%
\section{Convergence analysis}
\setcounter{lemma}{0}
\setcounter{theorem}{0}
\setcounter{equation}{0}
\setcounter{remark}{0}
\setcounter{corollary}{0}
Throughout this section, we assume that $C$ is a nonempty closed convex subset of a real uniformly smooth and 2-uniformly convex 
Banach space $E$.  Denote 
$$F=\left(\bigcap_{i=1}^M VI(A_i,C)\right)\bigcap \left(\bigcap_{j=1}^N F(S_j)\right)\bigcap\left(\bigcap_{k=1}^K EP(f_k)\right)$$
and assume that the set $F$ is nonempty.\\ 
We prove convergence theorems for methods $(\ref{eq:VIP-EP-FPP})$ and $(\ref{eq:VIP-EP-FPP1})$ with the control parameter sequences
 satisfying conditions $(\ref{dk:VIP-EP-FPP})$ and $(\ref{dk:VIP-EP-FPP1})$, respectively. We also propose similar parallel hybrid methods for quasi 
$\phi$-nonexpansive mappings, variational inequalities and equilibrium problems. 
\begin{theorem}\label{thm:VIP-EP-FPP}
Let $\left\{A_i\right\}_{i=1}^M$ be a finite family of mappings from $C$ to $E^*$ satisfying conditions {\rm (V1)-(V3)}. 
Let $\left\{f_k\right\}_{k=1}^K:C\times C\to  \mathbb{R}$ be a finite family of bifunctions satisfying conditions
 {\rm (A1)-(A4)}. Let $\left\{S_j\right\}_{j=1}^N:C\to C$ be a finite family of  uniform $L$-Lipschitz continuous and 
quasi-$\phi$-asymptotically nonexpansive mappings with the same sequence $\left\{k_n\right\} \subset [1,+\infty), k_n\to 1$. 
Assume that there exists a positive number $\omega$ such that $F \subset \Omega := \{u \in C: ||u|| \leq \omega \}$. 
If the control parameter sequences  
$\left\{\alpha_n\right\},\left\{\lambda_n\right\},\left\{r_n\right\}$ satisfy condition $(\ref{dk:VIP-EP-FPP})$, 
then the sequence $\left\{x_n\right\}$ generated by $(\ref{eq:VIP-EP-FPP})$ converges strongly to $\Pi_F x_0$.
\end{theorem}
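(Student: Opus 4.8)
The plan is the standard one for hybrid (shrinking projection) algorithms: first check that every $C_n$ is a nonempty closed convex set containing $F$, so that $x_n=\Pi_{C_n}x_0$ is well defined; then show that $\{x_n\}$ is Cauchy with some limit $p^*$; then prove $p^*\in F$ by exploiting the three ``$\arg\max$'' steps in turn; and finally identify $p^*=\Pi_F x_0$.

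\emph{The sets $C_n$.} One shows by induction that $C_n$ is closed and convex: $C_0=C$ is, and if $C_n$ is, then Lemma~\ref{lem:CloseConvex-Cn} applied to each of the inequalities $\phi(z,\bar u_n)\le\phi(z,\bar z_n)$ and $\phi(z,\bar z_n)\le\phi(z,x_n)+\epsilon_n$ shows $C_{n+1}$ is. To get $F\subset C_{n+1}$ from $F\subset C_n$, fix $p\in F$; then $\|p\|\le\omega$, $A_ip=0$ by (V3), $p\in F(S_j)$ and $p\in EP(f_k)=F(T^k_{r_n})$. For the variational‑inequality block, the $V$‑inequality (Lemma~\ref{lem:V-Property}), the estimate of Lemma~\ref{lem:2-uniform-convex}, inverse strong monotonicity of $A_i$ and $A_ip=0$ give, with $\tau:=2a(\alpha-2b/c^2)>0$,
\[
\phi(p,y^i_n)\le\phi\bigl(p,J^{-1}(Jx_n-\lambda_nA_ix_n)\bigr)\le\phi(p,x_n)-2\lambda_n\Bigl(\alpha-\tfrac{2\lambda_n}{c^2}\Bigr)\|A_ix_n\|^2\le\phi(p,x_n)-\tau\|A_ix_n\|^2 ,
\]
where the first inequality is Lemma~\ref{lem.proProjec}(i) for $\Pi_C$; in particular $\phi(p,\bar y_n)\le\phi(p,x_n)$. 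For the fixed‑point block, convexity of $\|\cdot\|^2$ on $E^*$ and the quasi $\phi$‑asymptotically nonexpansive inequality give
\[
\phi(p,z^j_n)\le\alpha_n\phi(p,x_n)+(1-\alpha_n)k_n\phi(p,\bar y_n)\le k_n\phi(p,x_n)\le\phi(p,x_n)+\epsilon_n ,
\]
using $(k_n-1)\phi(p,x_n)\le(k_n-1)(\omega+\|x_n\|)^2=\epsilon_n$; hence $\phi(p,\bar z_n)\le\phi(p,x_n)+\epsilon_n$. Finally Lemma~\ref{Tr-QPNM} yields $\phi(p,u^k_n)\le\phi(p,\bar z_n)$, so $\phi(p,\bar u_n)\le\phi(p,\bar z_n)$, and therefore $p\in C_{n+1}$. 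Thus $\emptyset\ne F\subset C_n$ for all $n$.

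\emph{Convergence of $\{x_n\}$.} Since $F\subset C_n$, Lemma~\ref{lem.proProjec}(i) gives $\phi(x_n,x_0)\le\phi(p,x_0)$, so $\{x_n\}$ is bounded; from $x_{n+1}\in C_{n+1}\subset C_n$, Lemma~\ref{lem.proProjec}(i) also gives $\phi(x_{n+1},x_n)+\phi(x_n,x_0)\le\phi(x_{n+1},x_0)$, so $\{\phi(x_n,x_0)\}$ is nondecreasing and bounded, hence convergent, and $\phi(x_m,x_n)\le\phi(x_m,x_0)-\phi(x_n,x_0)\to0$ for $m>n$. By Lemma~\ref{lem.prophi}, $\{x_n\}$ is Cauchy, so $x_n\to p^*\in C$. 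Because $\epsilon_n\to0$ and $\phi(x_{n+1},x_n)\to0$, the inequalities defining $C_{n+1}$ force $\phi(x_{n+1},\bar z_n)\to0$ and $\phi(x_{n+1},\bar u_n)\to0$, whence $\bar z_n\to p^*$ and $\bar u_n\to p^*$. Feeding the refined bound above into the estimate for $z^j_n$ at $j=j_n$ gives $(1-\alpha_n)k_n\tau\|A_{i_n}x_n\|^2\le\phi(p,x_n)-\phi(p,\bar z_n)+\epsilon_n\to0$, and since $\limsup\alpha_n<1$ we get $\|A_{i_n}x_n\|\to0$; then estimating $\phi\bigl(x_n,J^{-1}(Jx_n-\lambda_nA_{i_n}x_n)\bigr)$ directly and using Lemma~\ref{lem.proProjec}(i) yields $\phi(x_n,\bar y_n)\to0$, hence $\|x_n-\bar y_n\|\to0$. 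As $i_n,j_n,k_n$ are maximizing indices, $\|y^i_n-x_n\|,\|z^j_n-x_n\|,\|u^k_n-x_n\|\to0$ for all $i,j,k$; thus $y^i_n,z^j_n,u^k_n\to p^*$, and by uniform continuity of $J$ also $Jy^i_n,Jz^j_n,Ju^k_n\to Jp^*$.

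\emph{Identification $p^*\in F$ and conclusion.} For each $i$: $p^*\in C$, and from the optimality of $y^i_n=\Pi_C(J^{-1}(Jx_n-\lambda_nA_ix_n))$ (Lemma~\ref{lem.proProjec}(ii)), monotonicity and $\tfrac1\alpha$‑Lipschitz continuity of $A_i$, $\lambda_n\ge a$, $\|y^i_n-x_n\|\to0$ and $\|Jy^i_n-Jx_n\|\to0$, one derives $\langle v-p^*,A_iv\rangle\ge0$ for all $v\in C$, so $p^*\in VI(A_i,C)$ by Lemma~\ref{lem:T2000}. For each $j$: since $Jz^j_n-Jx_n=(1-\alpha_n)(JS^n_j\bar y_n-Jx_n)$ and $\limsup\alpha_n<1$, $\|JS^n_j\bar y_n-Jx_n\|\to0$, hence $\|S^n_j\bar y_n-x_n\|\to0$ and $\|S^n_j\bar y_n-\bar y_n\|\to0$; combining this with $\|\bar y_{n+1}-\bar y_n\|\to0$ and uniform $L$‑Lipschitz continuity of $S_j$ upgrades it to $\|S^{n+1}_j\bar y_n-\bar y_n\|\to0$, so $S^{n+1}_j\bar y_n\to p^*$, while $S^{n+1}_j\bar y_n=S_j(S^n_j\bar y_n)\to S_jp^*$ by continuity of $S_j$; hence $S_jp^*=p^*$. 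For each $k$: $u^k_n=T^k_{r_n}\bar z_n$ gives $f_k(u^k_n,y)+\tfrac1{r_n}\langle y-u^k_n,Ju^k_n-J\bar z_n\rangle\ge0$ for all $y\in C$; since $r_n\ge d$ and $\|Ju^k_n-J\bar z_n\|\to0$, (A2) yields $\limsup_nf_k(y,u^k_n)\le0$, so (A4) gives $f_k(y,p^*)\le0$ for all $y\in C$, and the usual substitution $y\mapsto ty+(1-t)p^*$, $t\downarrow0$, using (A1), (A3), (A4), yields $f_k(p^*,y)\ge0$ for all $y\in C$. Hence $p^*\in F$. Finally, $F$ is closed and convex by Lemmas~\ref{lem:N-Normal-Cone}, \ref{lem.F-CloseConvex}, \ref{Tr-ClosedConvex}, so $\Pi_Fx_0$ is defined; since $F\subset C_n$ and $x_n=\Pi_{C_n}x_0$, Lemma~\ref{lem.proProjec}(ii) gives $\langle x_n-z,Jx_0-Jx_n\rangle\ge0$ for all $z\in F$, and passing to the limit, $\langle p^*-z,Jx_0-Jp^*\rangle\ge0$ for all $z\in F$, so $p^*=\Pi_Fx_0$ by Lemma~\ref{lem.proProjec}(ii). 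I expect the main obstacles to be the refined variational‑inequality estimate (getting the right negative multiple of $\|A_{i_n}x_n\|^2$, which is precisely where the condition $b<\alpha c^2/2$ is used) and, above all, the fixed‑point identification: because the $S_j$ are only \emph{asymptotically} quasi $\phi$‑nonexpansive, passing from $\|S^n_j\bar y_n-\bar y_n\|\to0$ to $S_jp^*=p^*$ requires the shift trick with $\bar y_{n+1}$ together with uniform Lipschitz continuity and continuity of $S_j$.
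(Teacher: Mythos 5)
Your proposal is correct and follows the same seven-step shrinking-projection scheme as the paper: closedness/convexity of $F$ and $C_n$, the chain of estimates showing $F\subset C_n$ (with the term $-2a(1-\alpha_n)(\alpha-2b/c^2)\|A_{i_n}x_n-A_{i_n}u\|^2$ retained for later use), the Cauchy argument via monotonicity of $\phi(x_n,x_0)$, the extraction of $\|x_n-\bar u_n\|,\|x_n-\bar z_n\|,\|x_n-\bar y_n\|\to 0$, the shift trick $S_j^{n+1}\bar y_n=S_j(S_j^n\bar y_n)$ for the fixed-point block, and the standard equilibrium-problem limit. Your observation that (V3) with $y=u$ forces $A_iu=0$ is sound and merely streamlines the paper's estimates, which carry $\|A_{i_n}x_n-A_{i_n}u\|$ throughout. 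There are two genuinely local deviations worth noting: for $p^*\in VI(A_i,C)$ you invoke Minty's characterization (Lemma~\ref{lem:T2000}) after passing to the limit in $\langle v-y_n^i,A_iv\rangle\ge\langle v-y_n^i,A_iy_n^i-A_ix_n\rangle+\frac{1}{\lambda_n}\langle v-y_n^i,Jx_n-Jy_n^i\rangle$, whereas the paper runs the equivalent argument through the maximal monotone operator $Q_i=A_i+N_C$ and Rockafellar's lemma (Lemma~\ref{lem:N-Normal-Cone}); and for the final identification you use the variational characterization of the generalized projection (Lemma~\ref{lem.proProjec}(ii)) and pass to the limit in $\langle x_n-z,Jx_0-Jx_n\rangle\ge 0$, whereas the paper compares $\phi(x_n,x_0)\le\phi(x^\dagger,x_0)$ and uses uniqueness of the minimizer. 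Both alternatives are standard, rest only on lemmas already quoted in the paper, and neither changes the substance or difficulty of the proof.
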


\begin{proof} We divide the proof of Theorem $\ref{thm:VIP-EP-FPP}$ into seven steps.\\
\textbf{Step 1.} Claim that $F, C_n$ are closed convex subsets of $C$. \\
Indeed, since each mapping $S_i$ is uniformly $L$-Lipschitz continuous, it is closed. By Lemmas $\ref{lem:T2000},\ref{lem.F-CloseConvex}$ and 
$\ref{Tr-ClosedConvex}$, $F(S_i),VI(A_j,C)$ and $EP(f_k)$ are closed convex sets, therefore, $\bigcap_{j=1}^N(F(S_j)),$ 
$  \bigcap_{i=1}^MVI(A_i, C)$ and $\bigcap_{k=1}^K EP(f_k)$ are also closed and convex. 
Hence $F$ is a closed and convex subset of $C$. It is obvious that $C_n$ is closed for all $n\ge 0$. We prove the convexity of $C_n$ 
by induction. Clearly, $C_0:=C$ is closed convex. Assume that $C_n$ is closed convex for some $n\ge 0$. From the construction of 
$C_{n+1}$, we find\\
$C_{n+1}=C_n\bigcap \left\{z\in E:\phi(z,\bar{u}_n)\le \phi(z,\bar{z}_n)\le\phi(z,x_n)+\epsilon_n\right\}.$\\
Lemma $\ref{lem:CloseConvex-Cn}$ ensures that $C_{n+1}$ is convex. Thus, $C_n$ is closed convex for all $n\ge 0$. Hence, 
$\Pi_F x_0$ and $x_{n+1}:= \Pi_{C_{n+1}}x_0$ are well-defined.\\
%%%%%%%%%%%%%%%%%%%%%%%%%%%%%%%%%%%%%%%%%%%%%%%%%%
\textbf{Step 2.} Claim that $F\subset C_{n}$ for all $n\ge 0$. \\
By Lemma $\ref{Tr-QPNM}$ and the relative nonexpansiveness of $T_{r_n}$, we obtain $\phi(u,\bar{u}_n)=\phi(u,T_{r_n}\bar{z}_n)\le\phi(u,\bar{z}_n),$
for all $u\in F$. From the convexity of $||.||^2$ and the quasi $\phi$-asymptotical nonexpansiveness of $S_j$, we find
\begin{eqnarray}
\phi(u,\bar{z}_n)&=&\phi \left(u,J^{-1}\left(\alpha_n Jx_n+(1-\alpha_n)JS_{j_n}^n\bar{y}_n\right)\right)\nonumber\\
&=&||u||^2-2\alpha_n\left\langle u,x_n\right\rangle-2(1-\alpha_n)\left\langle u,JS_{j_n}^n\bar{y}_n\right\rangle\nonumber\\
&&+||\alpha_n Jx_n+(1-\alpha_n)JS_{j_n}^n\bar{y}_n||^2\nonumber\\
&\le&||u||^2-2\alpha_n\left\langle u,x_n\right\rangle-2(1-\alpha_n)\left\langle u,JS_{j_n}^n\bar{y}_n\right\rangle\nonumber\\
&&+\alpha_n ||x_n||^2+(1-\alpha_n)||S_{j_n}^n\bar{y}_n||^2\nonumber\\
&=&\alpha_n\phi(u,x_n)+(1-\alpha_n)\phi(u,S_{j_n}^n\bar{y}_n)\nonumber\\
&\le&\alpha_n\phi(u,x_n)+(1-\alpha_n)k_n\phi(u,\bar{y}_n)\label{eq:2}
\end{eqnarray}
for all $u\in F$. By the hypotheses of Theorem $\ref{thm:VIP-EP-FPP}$, Lemmas $\ref{lem:2-uniform-convex},\ref{lem.proProjec},\ref{lem:V-Property}$ and $u\in F$, we have
\begin{eqnarray}
\phi(u,\bar{y}_n)&=&\phi\left(u,\Pi_C\left(J^{-1}(Jx_n-\lambda_n A_{i_n} x_n)\right)\right)\nonumber\\ 
&\le&\phi(u,J^{-1}(Jx_n-\lambda_n A_{i_n} x_n))\nonumber\\ 
&=&V(u,Jx_n-\lambda_n A_{i_n} x_n)\nonumber\\ 
&\le& V(u,Jx_n-\lambda_n A_{i_n} x_n+\lambda_n A_{i_n} x_n)\nonumber\\
&&-2\left\langle J^{-1}\left(Jx_n-\lambda_n A_{i_n} x_n\right)-u,\lambda_n A_{i_n} x_n\right\rangle\nonumber\\
&=&\phi(u,x_n)-2\lambda_n\left\langle J^{-1}\left(Jx_n-\lambda_n A_{i_n} x_n\right)-J^{-1}\left(Jx_n\right),A_{i_n} x_n\right\rangle\nonumber\\
&&-2\lambda_n\left\langle x_n-u,A_{i_n} x_n -A_{i_n}(u) \right\rangle-2\lambda_n\left\langle x_n-u,A_{i_n}u \right\rangle\nonumber\\
&\le& \phi(u,x_n)+\frac{4\lambda_n}{c^2}||Jx_n-\lambda_n A_{i_n} x_n-Jx_n||||A_{i_n} x_n||\nonumber\\
&&-2\lambda_n\alpha||A_{i_n} x_n-A_{i_n}u||^2\nonumber\\
&\le& \phi(u,x_n)+\frac{4\lambda_n^2}{c^2}||A_{i_n} x_n||^2-2\lambda_n\alpha||A_{i_n} x_n-A_{i_n}u||^2\nonumber\\
&\le&\phi(u,x_n)-2a\left(\alpha-\frac{2b}{c^2}\right)||A_{i_n} x_n-A_{i_n}u||^2\nonumber\\
&\le& \phi(u,x_n).\label{eq:3}
\end{eqnarray}
From $(\ref{eq:2}),\left(\ref{eq:3}\right)$ and the estimate $(\ref{eq:proPhi})$, we obtain
\begin{eqnarray}
\phi(u,\bar{z}_n)&\le&\alpha_n\phi(u,x_n)+(1-\alpha_n)k_n\phi(u,x_n)\nonumber\\
&&-2a(1-\alpha_n) \left(\alpha-\frac{2b}{c^2}\right)||A_{i_n} x_n-A_{i_n}u||^2\nonumber\\
&\le&\phi(u,x_n)+(k_n-1)\phi(u,x_n)\nonumber\\
&&-2a(1-\alpha_n) \left(\alpha-\frac{2b}{c^2}\right)||A_{i_n} x_n-A_{i_n}u||^2\nonumber\\
&\le& \phi(u,x_n)+(k_n-1)\left(\omega+||x_n||\right)^2\nonumber\\
&&-2a(1-\alpha_n) \left(\alpha-\frac{2b}{c^2}\right)||A_{i_n} x_n-A_{i_n}u||^2\nonumber\\
&\le& \phi(u,x_n)+\epsilon_n.\label{eq:4}
\end{eqnarray}
Therefore, $F\subset C_n$ for all $n\ge 0$.\\
\textbf{Step 3.} Claim that the sequence $\left\{x_n\right\}$, $\left\{y_n^i\right\}$, $\left\{z_n^j\right\}$ and $\left\{u_n^k\right\}$ converge strongly to $p\in C$.\\
By Lemma $\ref{lem.proProjec}$ and $x_n=\Pi_{C_n}x_0$, we have
$$
\phi(x_n,x_0)\le\phi(u,x_0)-\phi(u,x_n)\le\phi(u,x_0).
$$
for all $u\in F$. Hence $\left\{\phi(x_n,x_0)\right\}$ is bounded. By $(\ref{eq:proPhi})$, $\left\{x_n\right\}$ is bounded, and so are the 
sequences $\left\{\bar{y}_n\right\}$, $\left\{\bar{u}_n\right\}$, and $\left\{\bar{z}_n\right\}$. By the construction of 
$C_n$, $x_{n+1}=\Pi_{C_{n+1}}x_0\in C_{n+1}\subset C_n$. From Lemma $\ref{lem.proProjec}$ and $x_n=\Pi_{C_n}x_0$, we get
$$
\phi(x_{n},x_0)\le\phi(x_{n+1},x_0)-\phi(x_{n+1},x_n)\le\phi(x_{n+1},x_0).
$$
Therefore, the sequence $\left\{\phi(x_{n},x_0)\right\}$ is nondecreasing, hence  it has a finite limit. Note that, for all 
$m\ge n$, $x_m\in C_m\subset C_n,$ and by Lemma $\ref{lem.proProjec}$ we obtain
\begin{equation}\label{eq:7}
\phi(x_{m},x_n)\le\phi(x_{m},x_0)-\phi(x_{n},x_0)\to 0
\end{equation}
as $m,n\to\infty$. From $(\ref{eq:7})$ and Lemma $\ref{lem.prophi}$ we have $||x_n-x_m||\to 0$. This shows that 
$\left\{x_n\right\}\subset C$ is a
Cauchy sequence. Since $E$ is complete and $C$ is closed convex subset of $E$, $\left\{x_n\right\}$ converges strongly to $p\in C$. 
From $\left(\ref{eq:7}\right)$, $\phi(x_{n+1},x_n)\to 0$ as $n\to\infty$. Taking into account that $x_{n+1}\in C_{n+1}$, we find
\begin{equation}\label{eq:8}
\phi(x_{n+1},\bar{u}_n)\le \phi(x_{n+1},\bar{z}_n)\le\phi(x_{n+1},x_n)+\epsilon_n
\end{equation}
Since $\left\{x_n\right\}$ is bounded, we can put $M=\sup\left\{||x_n||:n=0,1,2,\ldots\right\},$ hence
\begin{equation}\label{eq:9}
\epsilon_n:= (k_n-1) (\omega + ||x_n||)^2 \le (k_n-1)(\omega +M)^2\to 0.
\end{equation}
By $(\ref{eq:8}),(\ref{eq:9})$ and $\phi(x_{n+1},x_n)\to 0$, we find that 
\begin{equation}\label{eq:10}
\lim_{n\to\infty}\phi(x_{n+1},\bar{u}_n)=\lim_{n\to\infty}\phi(x_{n+1},\bar{z}_n)=\lim_{n\to\infty}\phi(x_{n+1},x_n)=0.
\end{equation}
Therefore, from Lemma $\ref{lem.prophi}$, 
$$\lim_{n\to\infty}||x_{n+1}-\bar{u}_n||=\lim_{n\to\infty}||x_{n+1}-\bar{z}_n||=\lim_{n\to\infty}||x_{n+1}-x_n||=0.$$
This together with $||x_{n+1}-x_n||\to 0$ implies that 
$$
\lim_{n\to\infty}||x_{n}-\bar{u}_n||=\lim_{n\to\infty}||x_{n}-\bar{z}_n||=0.
$$
By the definitions of $j_n$ and $k_n$, we obtain
\begin{equation}\label{eq:11}
\lim_{n\to\infty}||x_{n}-u^k_n||=\lim_{n\to\infty}||x_{n}-z^j_n||=0.
\end{equation}
for all $1\le k\le K$ and $1\le j\le N$. Hence 
\begin{equation}\label{eq:12}
\lim_{n\to\infty}x_n=\lim_{n\to\infty}u^k_n=\lim_{n\to\infty}z^j_n=p
\end{equation}
for all $1\le k\le K$ and $1\le j\le N$.
By the hypotheses of Theorem $\ref{thm:VIP-EP-FPP}$, Lemmas $\ref{lem:2-uniform-convex},\ref{lem.proProjec}$ and $\ref{lem:V-Property}$, we also have
\begin{eqnarray}
\phi(x_n,\bar{y}_n)&=&\phi\left(x_n,\Pi_C\left(J^{-1}(Jx_n-\lambda_n A_{i_n} x_n)\right)\right)\nonumber\\ 
&\le&\phi(x_n,J^{-1}(Jx_n-\lambda_n A_{i_n} x_n))\nonumber\\ 
&=&V(x_n,Jx_n-\lambda_n A_{i_n} x_n)\nonumber\\ 
&\le& V(x_n,Jx_n-\lambda_n A_{i_n} x_n+\lambda_n A_{i_n} x_n)\nonumber\\
&&-2\left\langle J^{-1}\left(Jx_n-\lambda_n A_{i_n} x_n\right)-x_n,\lambda_n A_{i_n} x_n\right\rangle\nonumber\\
&=&-2\lambda_n \left\langle J^{-1}\left(Jx_n-\lambda_n A_{i_n}x_n\right)-J^{-1}Jx_n,A_{i_n} x_n\right\rangle\nonumber\\
&\le& \frac{4\lambda_n}{c^2}||Jx_n-\lambda_n A_{i_n}x_n-Jx_n||||A_{i_n} x_n||\nonumber\\
&\le& \frac{4\lambda_n^2}{c^2}||A_{i_n} x_n||^2\nonumber\\
&\le& \frac{4b^2}{c^2}||A_{i_n} x_n-A_{i_n}u||^2 \label{eq:13}
\end{eqnarray}
for all $u\in \bigcap_{i=1}^M VI(A_i,C)$. From $(\ref{eq:4})$, we obtain
\begin{eqnarray}
&&2(1-\alpha_n)a \left(\alpha-\frac{2b}{c^2}\right)||A_{i_n} x_n-A_{i_n}u||^2\le\left(\phi(u,x_n)-\phi(u,\bar{z}_n)\right)+\epsilon_n\nonumber\\
&&\quad\quad=2\left\langle u,J\bar{z}_n-Jx_n\right\rangle+(||x_n||^2-||\bar{z}_n||^2)+\epsilon_n\nonumber\\
&&\quad\quad\le 2||u||||J\bar{z}_n-Jx_n||+||x_n-\bar{z}_n||(||x_n||+||\bar{z}_n||)+\epsilon_n.\label{eq:14}
\end{eqnarray}
Using the fact that $||x_n-\bar{z}_n||\to 0$ and $J$ is uniformly continuous on each bounded set, we can conclude that 
$||J\bar{z}_n-Jx_n||\to 0$ as $n\to\infty$. This together with $(\ref{eq:14}),$ and the relations $\lim\sup_{n\to\infty}\alpha_n<1$ and $\epsilon_n\to 0$ implies that 
\begin{equation}\label{eq:15}
\lim_{n\to\infty}||A_{i_n} x_n-A_{i_n}u||=0.
\end{equation}
From $(\ref{eq:13})$ and $(\ref{eq:15})$, we obtain
$$
\lim_{n\to\infty}\phi(x_n,\bar{y}_n)=0.
$$
Therefore $\lim_{n\to\infty}||x_n-\bar{y}_n||=0$. By the definition of $i_n$, we get\\
 $\lim_{n\to\infty}||x_n-y^i_n||=0$. Hence, 
\begin{equation}\label{eq:17}
\lim_{n\to\infty}y_n^i=p
\end{equation}
for all $1\le i\le M$.\\
\textbf{Step 4.} Claim that $p\in \bigcap_{j=1}^n F(S_j)$.\\
The relation $z_n^j=J^{-1}\left(\alpha_n Jx_n+(1-\alpha_n)JS_j^n\bar{y}_n\right)$ implies that $Jz_n^j=\alpha_n Jx_n+(1-\alpha_n)JS_j^n\bar{y}_n$. Therefore,
\begin{equation}\label{eq:18}
||Jx_n-Jz_n^j||=(1-\alpha_n)||Jx_n-JS_j^n\bar{y}_n||.
\end{equation}
Since $||x_n-z_n^j||\to 0$ and $J$ is uniformly continuous on each bounded subset of $E$, $||Jx_n-Jz_n^j||\to 0$ as $n\to\infty$. This together with $(\ref{eq:18})$ and $\lim\sup_{n\to\infty}\alpha_n<1$ implies that 
$$
\lim_{n\to\infty}||Jx_n-JS_j^n\bar{y}_n||=0.
$$
Therefore, 
\begin{equation}\label{eq:20}
\lim_{n\to\infty}||x_n-S_j^n\bar{y}_n||=0.
\end{equation}
Since $\lim_{n\to\infty}||x_n-\bar{y}_n||=0$, $\lim_{n\to\infty}||\bar{y}_n-S_j^n\bar{y}_n||=0$, hence
\begin{equation}\label{eq:21}
\lim_{n\to\infty}S_j^n\bar{y}_n=p.
\end{equation}
Further, 
\begin{eqnarray*}
\left\|S^{n+1}_j \bar{y}_n-S^{n}_j \bar{y}_n\right\|&\le& \left\|S^{n+1}_j \bar{y}_n-S^{n+1}_j \bar{y}_{n+1}\right\| +\left\|S^{n+1}_j \bar{y}_{n+1} -\bar{y}_{n+1}
\right\|\\
&&+\left\|\bar{y}_{n+1}-\bar{y}_{n}\right\|+\left\|\bar{y}_{n}-S^{n}_j \bar{y}_{n}\right\|\\ 
 &\le& \left(L+1\right) \left\|\bar{y}_{n+1}-\bar{y}_{n}\right\| +\left\|S^{n+1}_j \bar{y}_{n+1} -\bar{y}_{n+1}\right\|\\
&&+\left\|\bar{y}_{n}-S^{n}_j \bar{y}_{n}\right\| \to 0,
\end{eqnarray*}
therefore
\begin{equation}\label{eq:22}
\lim_{n\to\infty}S_j^{n+1}\bar{y}_n=\lim_{n\to\infty}S_j\left(S_j^{n}\bar{y}_n\right)=p.
\end{equation}
From $(\ref{eq:21})$,$(\ref{eq:22})$ and the closedness of $S_j$, we obtain $p\in F(S_j)$ for all $1\le j\le N$. Hence $p\in \bigcap_{j=1}^N F(S_j)$.\\
\textbf{Step 5.} Claim that $p\in \bigcap_{i=1}^M VI(A_i,C)$.\\
Lemma $\ref{lem:N-Normal-Cone}$ ensures that the mapping
$$
Q_i(x)=\left \{
\begin{array}{ll}
&A_ix+N_C(x)\quad\mbox{ if }\quad x\in C,\\
&\O \quad\quad\quad\quad\quad\quad \mbox{if}\quad x\notin C.
\end{array}
\right.
$$
is maximal monotone, where $N_C(x)$ is the normal cone to  $C$ at $x\in C$. For all $(x,y)$ in the graph of $Q_i$, i.e., $(x,y)\in G(Q_i)$, we have $y-A_i(x)\in N_C(x)$. By the definition of $N_C(x)$, we find that 
$$\left\langle x-z,y-A_i(x)\right\rangle\ge 0$$
for all $z\in C$. Since $y_n^i\in C$,
$$
\left\langle x-y_n^i,y-A_i(x)\right\rangle\ge 0.
$$
Therefore,
\begin{equation}\label{eq:24}
\left\langle x-y_n^i,y\right\rangle\ge\left\langle x-y_n^i,A_i(x)\right\rangle.
\end{equation}
Taking into account $y_n^i=\Pi_C\left(J^{-1}(Jx_n-\lambda_n A_i x_n)\right)$ and Lemma $\ref{lem.proProjec}$, we get
\begin{equation}\label{eq:25}
\left\langle x-y_n^i,Jy_n^i-Jx_n+\lambda_n A_i x_n\right\rangle\ge 0.
\end{equation}
Therefore, from $(\ref{eq:24}),(\ref{eq:25})$ and the monotonicity of $A_i$, we find that
\begin{eqnarray}
\left\langle x-y_n^i,y\right\rangle&\ge&\left\langle x-y_n^i,A_i(x)\right\rangle\nonumber\\ 
&=& \left\langle x-y_n^i,A_i(x)-A_i(y_n^i)\right\rangle+\left\langle x-y_n^i,A_i(y_n^i)-A_i(x_n)\right\rangle\nonumber\\
&&+\left\langle x-y_n^i,A_i(x_n)\right\rangle\nonumber\\
&\ge& \left\langle x-y_n^i,A_i(y_n^i)-A_i(x_n)\right\rangle+\left\langle x-y_n^i,\frac{Jx_n-Jy_n^i}{\lambda_n}\right\rangle. \label{eq:26}
\end{eqnarray}
Since $||x_n-y_n^i||\to 0$ and $J$ is uniform continuous on each bounded set, $||Jx_n-Jy_n^i||\to 0$. By $\lambda_n\ge a>0$, we obtain
\begin{equation}\label{eq:27}
\lim_{n\to\infty}\frac{Jx_n-Jy_n^i}{\lambda_n}=0.
\end{equation}
Since $A_i$ is $\alpha$-inverse strongly monotone, $A_i$ is $\frac{1}{\alpha}$-Lipschitz continuous. This together with $||x_n-y_n^i||\to 0$ implies that 
\begin{equation}\label{eq:28}
\lim_{n\to\infty}||A_i(y_n^i)-A_i(x_n)||=0.
\end{equation}
From $(\ref{eq:26})$, $(\ref{eq:27})$,$(\ref{eq:28})$, and $y_n^i\to p$, we obtain $\left\langle x-p,y\right\rangle\ge 0$ for all $(x,y)\in G(Q_i)$. Therefore $p\in Q_i^{-1}0=VI(A_i,C)$ for all $1\le i\le M$. Hence, $p\in \bigcap_{i=1}^M VI(A_i,C)$.\\
\textbf{Step 6.} Claim that $p\in \bigcap_{k=1}^K EP(f_k)$.\\
Since  $\lim_{n\to\infty}\left\|u_n^k-\bar{z}_n\right\|=0$ and $J$ is uniformly continuous on every bounded subset of $E$, we have 
$$
\lim_{n\to\infty}\left\|Ju_n^k-J\bar{z}_n\right\|=0.
$$
This together with $r_n\ge d>0$ implies that
\begin{equation}\label{eq:2.12}
\lim_{n\to\infty}\frac{\left\|Ju_n^k-J\bar{z}_n\right\|}{r_n}=0.
\end{equation}
We have $u_n^k=T_{r_n}^k\bar{z}_n$, and
\begin{equation}\label{eq:2.13}
f_k(u_n^k,y)+\frac{1}{r_n}\left\langle y-u_n^k,Ju_n^k-J\bar{z}_n\right\rangle \ge 0 \quad \forall y\in C.
\end{equation}
From $(\ref{eq:2.13})$ and condition $(A2)$, we get
\begin{equation}\label{eq:2.14}
\frac{1}{r_n}\left\langle y-u_n^k,Ju_n^k-J\bar{z}_n\right\rangle \ge -f_k(u_n^k,y)\ge f_k(y,u_n^k) \quad \forall y\in C.
\end{equation}
Letting $n\to\infty$, by $(\ref{eq:2.12}),(\ref{eq:2.14})$ and $(A4)$, we obtain
\begin{equation}\label{eq:2.15}
f_k(y,p)\le 0,\, \forall y\in C.
\end{equation}
Putting $y_t=ty+(1-t)p$, where $0<t\le 1$ and $y\in C$, we get $y_t \in C$. Hence, for sufficiently small
 $t$, from $(A3)$ and $(\ref{eq:2.15})$, we have 
$$
f_k(y_t,p)=f_k(ty+(1-t)p,p)\le 0.
$$
By the properties $(A1),(A4)$, we find
\begin{eqnarray*}
0&=&f_k(y_t,y_t)\\ 
&=&f_k(y_t,ty+(1-t)p) \\
&\le& tf_k(y_t,y)+(1-t)f_k(y_t,p)\\
&\le& tf_k(y_t,y)
\end{eqnarray*}
Dividing both sides of the last inequality by $t>0$, we obtain $f_k(y_t,y)\ge 0$ for all $y\in C$, i.e.,
$$ f_k(ty+(1-t)p,y)\ge 0,\, \forall y\in C. $$
Passing $t\to 0^+$, from $(A3)$, we get $f_k(p,y)\ge 0,\, \forall y\in C$ and $1\le k\le K$, i.e., $p\in \bigcap_{k=1}^K EP(f_k)$.\\
\textbf{Step 7.} Claim that the sequence $\left\{x_n\right\}$ converges strongly to $\Pi_F x_0$.\\
Indeed, since $x^{\dagger}:=\Pi_F(x_0)\in F \subset C_n,\,x_n=\Pi_{C_n}(x_0)$ from Lemma $\ref{lem.proProjec}$, we have
\begin{equation}\label{eq:29}
\phi(x_n,x_0)\le \phi(x^\dagger,x_0)-\phi(x^\dagger,x_n)\le \phi(x^\dagger,x_0).
\end{equation}
Therefore, 
\begin{eqnarray*}
\phi(x^\dagger,x_0)&\ge& \lim\limits_{n\to \infty}\phi(x_n,x_0)=\lim\limits_{n\to \infty}\left\{\left\|x_n\right\|^2-2\left\langle x_n,Jx_0\right\rangle+\left\|x_0\right\|^2\right\}\\ 
&=& \left\|p\right\|^2-2\left\langle p,Jx_0\right\rangle+\left\|x_0\right\|^2\\
&=&\phi(p,x_0).
\end{eqnarray*}
From the definition of $x^\dagger$, it follows that $p=x^\dagger$. The proof of Theorem $\ref{thm:VIP-EP-FPP}$ is complete.
\end{proof}
\begin{remark}
Assume that $\left\{A_i\right\}_{i=1}^M$ is a finite family of $\eta$-strongly monotone and $L$-Lipschitz continuous mappings. Then
each $A_i$ is $\frac{\eta}{L}$-inverse strongly monotone and $VI(A_i,C)=A_i^{-1}0$. Hence, $||A_i x||\le ||A_ix-A_iu||$ for all $x\in C$ 
and $u\in VI(A_i,C)$. Thus, all the conditions (V1)-(V3) for the variational inequalities $VI(A_i,C)$ hold.
\end{remark}
\begin{theorem}\label{thm:VIP-EP-FPP1}
Let $\left\{A_i\right\}_{i=1}^M$ be a finite family of mappings from $C$ to $E^*$ satisfying conditions {\rm (V1)-(V3)}. Let 
$\left\{f_k\right\}_{k=1}^K:C\times C\to  \mathbb{R}$ be a finite family of bifunctions satisfying conditions 
{\rm (A1)-(A4)}. Let $\left\{S_j\right\}_{j=1}^N:C\to C$ be a finite family of uniform $L$-Lipschitz continuous and 
quasi-$\phi$-asymptotically nonexpansive mappings with the same sequence $\left\{k_n\right\} \subset [1,+\infty), k_n\to 1$. 
Assume that $F$ is a subset of $\Omega$, and suppose that the control parameter sequences  
$\left\{\alpha_n\right\},\left\{\lambda_n\right\},\left\{r_n\right\}$ satisfy condition $(\ref{dk:VIP-EP-FPP1})$. 
Then the sequence $\left\{x_n\right\}$ generated by method $(\ref{eq:VIP-EP-FPP1})$ converges strongly to $\Pi_F x_0$.
\end{theorem}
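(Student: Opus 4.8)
The plan is to follow the seven-step scheme of the proof of Theorem \ref{thm:VIP-EP-FPP}, the only structural change being that the ``farthest point'' selection of $\bar z_n$ is replaced by a direct analysis of the single convex combination $z_n=J^{-1}(\alpha_{n,0}Jx_n+\sum_{j=1}^N\alpha_{n,j}JS_j^n\bar y_n)$, for which the right tool is Lemma \ref{lem:PK2008}. First, $F$ is closed and convex by Lemmas \ref{lem:T2000}, \ref{lem.F-CloseConvex} and \ref{Tr-ClosedConvex}, and each $C_n$ is closed and convex by induction, since now $C_{n+1}=C_n\cap\{z\in E:\phi(z,\bar u_n)\le\phi(z,x_n)+\epsilon_n\}$ and Lemma \ref{lem:CloseConvex-Cn} (with $\lambda=0$) applies; hence $\Pi_F x_0$ and $x_{n+1}=\Pi_{C_{n+1}}x_0$ are well defined. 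To prove $F\subset C_n$, fix $u\in F$. Relative nonexpansiveness of $T_{r_n}^k$ (Lemma \ref{Tr-QPNM}) gives $\phi(u,\bar u_n)\le\phi(u,z_n)$. Since $E$ is uniformly smooth, $E^*$ is uniformly convex, so Lemma \ref{lem:PK2008}, applied to $Jx_n,JS_1^n\bar y_n,\dots,JS_N^n\bar y_n$ (all lying in one ball because $\{x_n\},\{\bar y_n\}$, hence $\{S_j^n\bar y_n\}$, are bounded) with weights $\alpha_{n,0},\dots,\alpha_{n,N}$, produces a continuous, strictly increasing, convex $g$ with $g(0)=0$ such that
$$\phi(u,z_n)\le\alpha_{n,0}\phi(u,x_n)+\sum_{l=1}^N\alpha_{n,l}\phi(u,S_l^n\bar y_n)-\alpha_{n,0}\alpha_{n,j}\,g\bigl(\|Jx_n-JS_j^n\bar y_n\|\bigr)$$
for every $j=1,\dots,N$. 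Using $\phi(u,S_l^n\bar y_n)\le k_n\phi(u,\bar y_n)$, the variational-inequality estimate $\phi(u,\bar y_n)\le\phi(u,x_n)$ obtained exactly as in (\ref{eq:3}) (this is where $a,b\in(0,\alpha c^2/2)$ and (V1)--(V3) enter), and $(k_n-1)\phi(u,x_n)\le(k_n-1)(\omega+\|x_n\|)^2=\epsilon_n$, one gets $\phi(u,z_n)\le\phi(u,x_n)+\epsilon_n$, hence $\phi(u,\bar u_n)\le\phi(u,x_n)+\epsilon_n$ and $u\in C_{n+1}$.

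The strong convergence $x_n\to p\in C$ is then verbatim Step 3 of Theorem \ref{thm:VIP-EP-FPP}: $\{\phi(x_n,x_0)\}$ is bounded (so $\{x_n\},\{\bar y_n\},\{z_n\},\{\bar u_n\}$ are bounded) and nondecreasing, $\phi(x_m,x_n)\to0$ via Lemma \ref{lem.proProjec}, Lemma \ref{lem.prophi} makes $\{x_n\}$ Cauchy, $\epsilon_n\to0$, and $x_{n+1}\in C_{n+1}$ yields $\phi(x_{n+1},\bar u_n)\to0$, whence $\|x_n-\bar u_n\|\to0$ and, by definition of $k_n$, $\|x_n-u_n^k\|\to0$ and $u_n^k\to p$ for all $k$. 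The new work is to extract $\|x_n-\bar y_n\|\to0$ and $\|x_n-z_n\|\to0$. Since $\|x_n-\bar u_n\|\to0$ with $x_n,\bar u_n\to p$, uniform continuity of $J$ gives $\phi(u,x_n)-\phi(u,\bar u_n)\to0$; combined with $\phi(u,\bar u_n)\le\phi(u,z_n)\le\phi(u,x_n)+\epsilon_n$ this forces $\phi(u,x_n)-\phi(u,z_n)\to0$. Substituting the sharper form of (\ref{eq:3}) (retaining the term $-2a(\alpha-2b/c^2)\|A_{i_n}x_n-A_{i_n}u\|^2$) into the displayed estimate and using $\liminf\alpha_{n,0}\alpha_{n,j}>0$ (which also gives $1-\alpha_{n,0}\ge\alpha_{n,1}\ge\delta>0$), I obtain $\|A_{i_n}x_n-A_{i_n}u\|\to0$ and $g(\|Jx_n-JS_j^n\bar y_n\|)\to0$, hence $\|Jx_n-JS_j^n\bar y_n\|\to0$ and $\|x_n-S_j^n\bar y_n\|\to0$ for every $j$. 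Then $\phi(x_n,\bar y_n)\le\tfrac{4b^2}{c^2}\|A_{i_n}x_n-A_{i_n}u\|^2$ (as in (\ref{eq:13})) gives $\|x_n-\bar y_n\|\to0$, so $y_n^i\to p$ for all $i$, while $\|Jz_n-Jx_n\|\le\sum_{j=1}^N\alpha_{n,j}\|JS_j^n\bar y_n-Jx_n\|\to0$ gives $z_n\to p$ and $\|u_n^k-z_n\|\to0$.

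The three memberships and the final identification are then handled exactly as in Theorem \ref{thm:VIP-EP-FPP}. From $\|\bar y_n-S_j^n\bar y_n\|\to0$ we get $S_j^n\bar y_n\to p$; the uniform $L$-Lipschitz property gives $\|S_j^{n+1}\bar y_n-S_j^n\bar y_n\|\to0$, so $S_j(S_j^n\bar y_n)\to p$, and closedness of $S_j$ yields $p\in F(S_j)$ for all $j$. Applying Lemma \ref{lem:N-Normal-Cone} to the maximal monotone $Q_i=A_i+N_C$ and using the projection inequality for $y_n^i$, monotonicity and $\tfrac1\alpha$-Lipschitz continuity of $A_i$, $\|x_n-y_n^i\|\to0$, $\lambda_n\ge a$, and $y_n^i\to p$, one gets $\langle x-p,y\rangle\ge0$ for all $(x,y)\in G(Q_i)$, i.e. $p\in Q_i^{-1}0=VI(A_i,C)$. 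Since $\|u_n^k-z_n\|\to0$ and $r_n\ge d$, $\tfrac1{r_n}\|Ju_n^k-Jz_n\|\to0$; using (A2) in the defining inequality of $T_{r_n}^k$, passing to the limit with (A4), and then the usual convexity argument with (A1),(A3),(A4), one obtains $f_k(p,y)\ge0$ for all $y\in C$. Finally $x^\dagger:=\Pi_F x_0\in F\subset C_n$ and $x_n=\Pi_{C_n}x_0$ give $\phi(x_n,x_0)\le\phi(x^\dagger,x_0)-\phi(x^\dagger,x_n)\le\phi(x^\dagger,x_0)$; letting $n\to\infty$, $\phi(p,x_0)\le\phi(x^\dagger,x_0)$, so $p=x^\dagger=\Pi_F x_0$.

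I expect the only genuinely new difficulty relative to Theorem \ref{thm:VIP-EP-FPP} to be the bookkeeping around the convex combination: deducing $\|Jx_n-JS_j^n\bar y_n\|\to0$ for all $j$ at once --- which is precisely why $\liminf\alpha_{n,0}\alpha_{n,j}>0$ is imposed --- and arranging the squeeze $\phi(u,\bar u_n)\le\phi(u,z_n)\le\phi(u,x_n)+\epsilon_n$ so that the nonpositive $g$-terms and the $\|A_{i_n}x_n-A_{i_n}u\|^2$-term (in place of a single $\arg\max$ bound) can be controlled simultaneously.
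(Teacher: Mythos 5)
Your proposal is correct and follows essentially the same route as the paper's proof: Lemma \ref{lem:PK2008} applied to the convex combination in $E^*$ to get the estimate $\phi(u,z_n)\le\alpha_{n,0}\phi(u,x_n)+\sum_l\alpha_{n,l}k_n\phi(u,\bar y_n)-\alpha_{n,0}\alpha_{n,j}g(\|Jx_n-JS_j^n\bar y_n\|)$, the inclusion $F\subset C_n$ via the estimate (\ref{eq:3}), and then the squeeze on $\phi(u,x_n)-\phi(u,\bar u_n)$ together with $\liminf\alpha_{n,0}\alpha_{n,j}>0$ to force $g(\|Jx_n-JS_j^n\bar y_n\|)\to0$, with the remaining steps imported verbatim from Theorem \ref{thm:VIP-EP-FPP}. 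The details you fill in (e.g.\ $\|Jz_n-Jx_n\|\le\sum_j\alpha_{n,j}\|JS_j^n\bar y_n-Jx_n\|$ and the use of $\alpha_{n,j}\le 1-\alpha_{n,0}$ to control the $\|A_{i_n}x_n-A_{i_n}u\|^2$ term) are exactly what the paper leaves implicit in its ``arguing similarly'' steps.
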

\begin{proof}
Arguing similarly as in Step 1 of the proof of Theorem $\ref{thm:VIP-EP-FPP}$, we conclude that $F, C_n$ are closed convex for all 
$n\ge 0$. Now we show that $F\subset C_n$ for all $n\ge 0$. For all $u\in F$, by Lemma $\ref{lem:PK2008}$ and the convexity of $||.||^2$ 
we obtain
\begin{eqnarray}
\phi(u,z_n)&=&\phi \left(u,J^{-1}\left(\alpha_{n,0} Jx_n+\sum_{l=1}^N \alpha_{n,l} JS_l^n\bar{y}_n\right)\right)\nonumber\\
&=&||u||^2-2\alpha_{n,0}\left\langle u,x_n\right\rangle-2\sum_{l=1}^N \alpha_{n,l}\left\langle u,S_l^n\bar{y}_n\right\rangle\nonumber\\
&&+||\alpha_{n,0} Jx_n+\sum_{l=1}^N \alpha_{n,l} JS_l^n\bar{y}_n||^2\nonumber\\
&\le&||u||^2-2\alpha_{n,0}\left\langle u,x_n\right\rangle-2\sum_{l=1}^N \alpha_{n,l}\left\langle u,S_l^n\bar{y}_n\right\rangle+\alpha_{n,0}||x_n||^2\nonumber\\
&&+\sum_{l=1}^N \alpha_{n,l}||S_l^n\bar{y}_n||^2-\alpha_{n,0}\alpha_{n,j}g\left(||Jx_n-JS_j^n\bar{y}_n||\right)\nonumber\\
&\le&\alpha_{n,0}\phi(u,x_n)+\sum_{l=1}^N \alpha_{n,l}\phi(u,S_{l}^n\bar{y}_n)-\alpha_{n,0}\alpha_{n,j}g\left(||Jx_n-JS_j^n\bar{y}_n||\right)\nonumber\\
&\le&\alpha_{n,0}\phi(u,x_n)+\sum_{l=1}^N \alpha_{n,l}k_n\phi(u,\bar{y}_n)-\alpha_{n,0}\alpha_{n,j}g\left(||Jx_n-JS_j^n\bar{y}_n||\right).\nonumber\\\label{eq:30}
\end{eqnarray}
From $(\ref{eq:3})$, we get
\begin{equation}\label{eq:31}
\phi(u,\bar{y}_n)\le\phi(u,x_n)-2a\left(\alpha-\frac{2b}{c^2}\right)||A_{i_n} x_n-A_{i_n}u||^2. 
\end{equation}
Using  $(\ref{eq:30})$, $(\ref{eq:31})$ and the estimate $(\ref{eq:proPhi})$, we find
\begin{eqnarray}
\phi(u,\bar{u}_n)&=&\phi(u,T_{r_n}^{k_n}z_n)\nonumber\\
&\le&\phi(u,z_n)\nonumber\\
&\le& \phi(u,x_n)+\sum_{l=1}^N \alpha_{n,l}(k_n-1)\phi(u,x_n)-\alpha_{n,0}\alpha_{n,j}g\left(||Jx_n-JS_j^n\bar{y}_n||\right)\nonumber\\ 
&&-2\sum_{l=1}^N \alpha_{n,l}a\left(\alpha-\frac{2b}{c^2}\right)||A_{i_n} x_n-A_{i_n}u||^2\nonumber\\
& \le& \phi(u,x_n)+(k_n-1)\left(\omega+||x_n||\right)^2-\alpha_{n,0}\alpha_{n,j}g\left(||Jx_n-JS_j^n\bar{y}_n||\right)\nonumber\\ 
&&-2\sum_{l=1}^N \alpha_{n,l}a\left(\alpha-\frac{2b}{c^2}\right)||A_{i_n} x_n-A_{i_n}u||^2\nonumber\\
& \le& \phi(u,x_n)+\epsilon_n. \label{eq:32}
\end{eqnarray}
Therefore
$$
\phi(u,\bar{u}_n)\le \phi(u,x_n)+\epsilon_n
$$
for all $u\in F$. This implies that $F\subset C_n$ for all $n\ge 0$. Using $(\ref{eq:32})$ and arguing similarly as in Steps 3, 5, 6 of Theorem $\ref{thm:VIP-EP-FPP}$, we obtain
$$
\lim_{n\to\infty}\bar{u}_n=\lim_{n\to\infty}u_n^k=\lim_{n\to\infty}\bar{y}_n=\lim_{n\to\infty}y^i_n=\lim_{n\to\infty}x_n=p\in C,
$$
and $p\in \left(\bigcap_{k=1}^K EP(f_k)\right)\bigcap \left(\bigcap_{i=1}^M VI(A_i,C)\right)$.\\
 Next, we show that $p\in$ $\bigcap_{j=1}^N F(S_j)$. Indeed, from $(\ref{eq:32})$, we have
\begin{equation}\label{eq:34}
\alpha_{n,0}\alpha_{n,j}g\left(||Jx_n-JS_j^n\bar{y}_n||\right)\le (\phi(u,x_n)-\phi(u,\bar{u}_n))+\epsilon_n.
\end{equation}
Since $||x_n-\bar{u}_n||\to 0$, $|\phi(u,x_n)-\phi(u,\bar{u}_n)|\to 0$ as $n\to\infty$. This together with $(\ref{eq:34})$
and the facts that $\epsilon_n\to 0$ and $ \lim\inf_{n\to \infty}\alpha_{n,0}\alpha_{n,j}>0$ implies that
$$
\lim_{n\to \infty}g\left(||Jx_n-JS_j^n\bar{y}_n||\right)=0.
$$
By Lemma $\ref{lem:PK2008}$, we get
$$
\lim_{n\to \infty}||Jx_n-JS_j^n\bar{y}_n||=0.
$$
Since $J$ is uniformly continuous on each bounded subset of $E$, we conclude that
$$
\lim_{n\to \infty}||x_n-S_j^n\bar{y}_n||=0.
$$
Using the last equality and  a similar argument for proving relations $(\ref{eq:20})$, $(\ref{eq:21})$, $(\ref{eq:22})$, and acting as in Step 7 of the 
proof of Theorem $\ref{thm:VIP-EP-FPP}$, we obtain $p\in \bigcap_{j=1}^N F(S_j)$ and $p=x^\dagger=\Pi_F x_0$. The proof of Theorem $\ref{thm:VIP-EP-FPP1}$ is complete.
\end{proof}
%%%%%%%%%%%%%%%%%%%%%%%%%%%%%%%%%%%%%%%%%%%%%%%%%
Next, we consider two parallel hybrid methods for solving variational inequalities, equilibrium problems and quasi $\phi$-nonexpansive 
mappings, when the boundedness of the solution set $F$ and the uniform Lipschitz continuity of $S_i$ are not assumed.
\begin{theorem}\label{thm:VIP-EP-FPP2}
Assume that $\left\{A_i\right\}_{i=1}^M,\left\{f_k\right\}_{k=1}^K,\left\{\alpha_n\right\},\left\{r_n\right\}$ and $\left\{\lambda_n\right\}$ 
satisfy all conditions of Theorem $\ref{thm:VIP-EP-FPP}$ and $\left\{S_j\right\}_{j=1}^N$ is a finite family of closed and quasi 
$\phi$-nonexpansive mappings. In addition, suppose that the solution set $F$ is nonempty. For an intitial point $x_0\in C$, define the sequence $\left\{x_n\right\}$ as follows:
\begin{equation}\label{eq:VIP-EP-FPP2}
\left \{
\begin{array}{ll}
&y_n^i=\Pi_C\left(J^{-1}(Jx_n-\lambda_n A_i x_n)\right), i=1,2,\ldots M, \\
&i_n=\arg\max\left\{||y_n^i-x_n||:i=1,2,\ldots M.\right\}, \bar{y}_n=y_n^{i_n},\\
&z_n^j=J^{-1}\left(\alpha_n Jx_n+(1-\alpha_n)JS_j\bar{y}_n\right),j=1,2,\ldots N,\\
&j_n=\arg\max\left\{||z_n^j-x_n||:j=1,2,\ldots N\right\}, \bar{z}_n=z_n^{j_n},\\
&u_n^k=T_{r_n}^k \bar{z}_n,k=1,2,\ldots K, \\
&k_n=\arg\max\left\{||u_n^k-x_n||:k=1,2,\ldots K\right\}, \bar{u}_n=u_n^{k_n},\\
&C_{n+1}=\left\{z\in C_n:\phi(z,\bar{u}_n)\le \phi(z,\bar{z}_n)\le\phi(z,x_n)\right\},\\
&x_{n+1}=\Pi_{C_{n+1}}x_0,n\ge 0.
\end{array}
\right.
\end{equation}
Then the sequence $\left\{x_n\right\}$ converges strongly to $\Pi_F x_0$.
\end{theorem}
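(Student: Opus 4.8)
The plan is to follow, almost verbatim, the seven-step scheme of the proof of Theorem~\ref{thm:VIP-EP-FPP}, the point being that the present hypotheses are just the specialization $k_n\equiv1$, $\epsilon_n\equiv0$, with the powers $S_j^n$ replaced by $S_j$; these specializations eliminate precisely the two places where boundedness of $F$ and uniform Lipschitz continuity of the $S_j$ entered. First I would check that $F$ and every $C_n$ are closed and convex: each $S_j$, being closed and quasi $\phi$-nonexpansive, is in particular closed and quasi $\phi$-asymptotically nonexpansive with $k_n\equiv1$, so Lemma~\ref{lem.F-CloseConvex} gives that $F(S_j)$ is closed convex, and together with Lemmas~\ref{lem:T2000} and~\ref{Tr-ClosedConvex} this makes $F$ closed convex; the closedness of $C_n$ is obvious and its convexity follows by induction from Lemma~\ref{lem:CloseConvex-Cn} applied to $C_{n+1}=C_n\cap\{z:\phi(z,\bar u_n)\le\phi(z,\bar z_n)\le\phi(z,x_n)\}$, so $\Pi_Fx_0$ and $x_{n+1}=\Pi_{C_{n+1}}x_0$ are well defined. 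I would then show $F\subset C_n$: fixing $u\in F$, Lemma~\ref{Tr-QPNM} gives $\phi(u,\bar u_n)\le\phi(u,\bar z_n)$, convexity of $\|\cdot\|^2$ together with the quasi $\phi$-nonexpansiveness of $S_{j_n}$ (now with no factor $k_n$) gives $\phi(u,\bar z_n)\le\alpha_n\phi(u,x_n)+(1-\alpha_n)\phi(u,\bar y_n)$, and estimate $(\ref{eq:3})$ --- which uses only (V1)--(V3), Lemmas~\ref{lem:2-uniform-convex},~\ref{lem.proProjec},~\ref{lem:V-Property} and the range $a,b\in(0,\alpha c^2/2)$ --- gives $\phi(u,\bar y_n)\le\phi(u,x_n)$; chaining these yields $\phi(u,\bar u_n)\le\phi(u,\bar z_n)\le\phi(u,x_n)$, i.e. $u\in C_{n+1}$.

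Next I would transcribe Step 3 of Theorem~\ref{thm:VIP-EP-FPP}: since $x_n=\Pi_{C_n}x_0$ and $F\subset C_n$, the sequence $\{\phi(x_n,x_0)\}$ is bounded and nondecreasing, hence convergent, so Lemma~\ref{lem.proProjec} gives $\phi(x_m,x_n)\to0$, $\{x_n\}$ is Cauchy, and $x_n\to p\in C$. From $x_{n+1}\in C_{n+1}$ we get $\phi(x_{n+1},\bar u_n)\le\phi(x_{n+1},\bar z_n)\le\phi(x_{n+1},x_n)\to0$ (here $\epsilon_n=0$, so no vanishing correction term has to be carried along), whence Lemma~\ref{lem.prophi} yields $\|x_n-\bar u_n\|\to0$ and $\|x_n-\bar z_n\|\to0$, and by the definitions of $j_n,k_n$ also $\|x_n-u_n^k\|\to0$ and $\|x_n-z_n^j\|\to0$ for all $k,j$; an estimate of the form $(\ref{eq:13})$--$(\ref{eq:15})$ then gives $\phi(x_n,\bar y_n)\to0$, hence $\|x_n-y_n^i\|\to0$ for all $i$. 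Therefore $u_n^k, z_n^j, y_n^i\to p$.

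It remains to identify $p$. For $p\in\bigcap_j F(S_j)$ I would use $Jz_n^j=\alpha_nJx_n+(1-\alpha_n)JS_j\bar y_n$, so $\|Jx_n-Jz_n^j\|=(1-\alpha_n)\|Jx_n-JS_j\bar y_n\|\to0$ by uniform continuity of $J$ on bounded sets; with $\limsup_n\alpha_n<1$ this forces $\|Jx_n-JS_j\bar y_n\|\to0$, hence $\|x_n-S_j\bar y_n\|\to0$ and so $S_j\bar y_n\to p$, while $\bar y_n\to p$, and closedness of $S_j$ gives $S_jp=p$. This is the step I expect to require the most care, and it is precisely where the proof genuinely shortens relative to Theorem~\ref{thm:VIP-EP-FPP}: with no iterates $S_j^n$ present, the auxiliary estimate bounding $\|S_j^{n+1}\bar y_n-S_j^n\bar y_n\|$ --- the sole place uniform $L$-Lipschitz continuity was used --- disappears, and closedness alone suffices. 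The inclusions $p\in\bigcap_i VI(A_i,C)$ and $p\in\bigcap_k EP(f_k)$ follow word for word as in Steps 5 and 6 of Theorem~\ref{thm:VIP-EP-FPP}, via the maximal monotone operator $Q_i$ of Lemma~\ref{lem:N-Normal-Cone} and conditions (A1)--(A4) respectively, and Step 7 finishes unchanged: from $x^\dagger:=\Pi_Fx_0\in F\subset C_n$ and $x_n=\Pi_{C_n}x_0$ one gets $\phi(x^\dagger,x_0)\ge\lim_n\phi(x_n,x_0)=\phi(p,x_0)$, so the minimality defining $\Pi_F$ forces $p=x^\dagger=\Pi_Fx_0$.
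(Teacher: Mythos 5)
Your proposal is correct and follows exactly the route the paper takes: the paper's own proof of this theorem is just the observation that a closed quasi $\phi$-nonexpansive mapping is closed quasi $\phi$-asymptotically nonexpansive with $k_n\equiv 1$, hence $\epsilon_n\equiv 0$, after which the argument of Theorem~\ref{thm:VIP-EP-FPP} is repeated. You merely spell out the details the paper leaves implicit, correctly noting that the only substantive simplification is in Step 4, where closedness of $S_j$ replaces the uniform Lipschitz estimate on $\|S_j^{n+1}\bar y_n-S_j^n\bar y_n\|$.
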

\begin{proof}
Since $S_i$ is a closed and quasi $\phi$-nonexpansive mapping, it is closed and quasi $\phi$-asymptotically nonexpansive mapping with 
$k_n=1$ for all $n\ge 0$. Hence, $\epsilon_n=0$ by definition. Arguing similarly as in the proof of Theorem $\ref{thm:VIP-EP-FPP}$, we 
come to the desired conclusion.
\end{proof}
%%%%%%%%%%%%%%%%%%%%%%%%%%%%%%%%%%%%%%%
\begin{theorem}\label{thm:VIP-EP-FPP4}
Assume that $\left\{A_i\right\}_{i=1}^M,\left\{f_k\right\}_{k=1}^K,\left\{r_n\right\},\left\{\alpha_{n,j}\right\}$ and $\left\{\lambda_n\right\}$ 
satisfy all conditions of Theorem $\ref{thm:VIP-EP-FPP1}$ and $\left\{S_j\right\}_{j=1}^N$ is a finite family of closed and quasi 
$\phi$-nonexpansive mappings. In addition, suppose that the solution set $F$ is nonempty. For an initial approximation $x_0\in C$, let the sequence 
$\left\{x_n\right\}$ be defined by
\begin{equation}\label{eq:VIP-EP-FPP2}
\left \{
\begin{array}{ll}
&y_n^i=\Pi_C\left(J^{-1}(Jx_n-\lambda_n A_i x_n)\right), i=1,2,\ldots M, \\
&i_n=\arg\max\left\{||y_n^i-x_n||:i=1,2,\ldots M.\right\}, \bar{y}_n=y_n^{i_n},\\
&z_n=J^{-1}\left(\alpha_{n,0} Jx_n+\sum_{j=1}^N \alpha_{n,j} JS_j\bar{y}_n\right),\\
&u_n^k=T_{r_n}^k z_n,k=1,2,\ldots K, \\
&k_n=\arg\max\left\{||u_n^k-x_n||:k=1,2,\ldots K\right\}, \bar{u}_n=u_n^{k_n},\\
&C_{n+1}=\left\{z\in C_n:\phi(z,\bar{u}_n)\le\phi(z,x_n)\right\},\\
&x_{n+1}=\Pi_{C_{n+1}}x_0,n\ge 0.
\end{array}
\right.
\end{equation}
Then the sequence $\left\{x_n\right\}$ converges strongly to $\Pi_F x_0$.
\end{theorem}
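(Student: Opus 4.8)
The plan is to deduce Theorem~\ref{thm:VIP-EP-FPP4} from the proof of Theorem~\ref{thm:VIP-EP-FPP1} by the same device already used in Theorem~\ref{thm:VIP-EP-FPP2}: a closed quasi $\phi$-nonexpansive mapping is a closed quasi $\phi$-asymptotically nonexpansive mapping with the constant sequence $k_n\equiv 1$, hence $\epsilon_n\equiv 0$, and therefore neither the boundedness of $F$ nor the uniform Lipschitz continuity of the $S_j$ needs to be assumed. A further simplification is that in the present scheme the mappings enter via $S_j\bar{y}_n$, not $S_j^n\bar{y}_n$, so the telescoping argument used in Theorem~\ref{thm:VIP-EP-FPP1} to control the $n$-th iterates disappears and only the closedness of each $S_j$ is needed at the fixed-point step. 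Thus the proof consists of running through the argument of Theorems~\ref{thm:VIP-EP-FPP} and~\ref{thm:VIP-EP-FPP1} in the ``Method~B'' form and checking that every step survives these reductions.

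First I would check that $F$ and $C_n$ are closed and convex: $F(S_j)$ by Lemma~\ref{lem.F-CloseConvex}, $VI(A_i,C)$ by Lemmas~\ref{lem:T2000} and~\ref{lem:N-Normal-Cone}, $EP(f_k)$ by Lemma~\ref{Tr-ClosedConvex}, hence $F$; and $C_n$ by induction via Lemma~\ref{lem:CloseConvex-Cn}, so that $x_{n+1}=\Pi_{C_{n+1}}x_0$ and $\Pi_F x_0$ are well defined. To see $F\subset C_n$, fix $u\in F$. Applying Lemma~\ref{lem:PK2008} to $\{x_n,S_1\bar{y}_n,\ldots,S_N\bar{y}_n\}$, the convexity of $\|\cdot\|^2$, the inequality $\phi(u,S_j\bar{y}_n)\le\phi(u,\bar{y}_n)$, and the estimate~\eqref{eq:3} for $\phi(u,\bar{y}_n)$ (with $k_n=1$), one obtains, exactly as in~\eqref{eq:30}--\eqref{eq:32},
$$
\phi(u,z_n)\le\phi(u,x_n)-\alpha_{n,0}\alpha_{n,j}\,g\!\left(\|Jx_n-JS_j\bar{y}_n\|\right)\le\phi(u,x_n),
$$
and then, by Lemma~\ref{Tr-QPNM}, $\phi(u,\bar{u}_n)=\phi(u,T_{r_n}^{k_n}z_n)\le\phi(u,z_n)\le\phi(u,x_n)$, so $u\in C_{n+1}$.

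The convergence part is the standard hybrid argument. From $x_n=\Pi_{C_n}x_0$ and Lemma~\ref{lem.proProjec}, $\{\phi(x_n,x_0)\}$ is bounded and nondecreasing, hence convergent, and $\phi(x_m,x_n)\to0$ for $m\ge n\to\infty$; Lemma~\ref{lem.prophi} then shows $\{x_n\}$ is Cauchy and converges to some $p\in C$. Since $x_{n+1}\in C_{n+1}$ and $\epsilon_n=0$, we get $\phi(x_{n+1},\bar{u}_n)\to0$ and $\phi(x_{n+1},x_n)\to0$, so $\|x_n-\bar{u}_n\|\to0$, and the $\arg\max$ definitions force $\|x_n-u_n^k\|\to0$, $u_n^k\to p$. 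Condition~\eqref{dk:VIP-EP-FPP1} implies $\alpha_{n,0}$ is bounded away from $0$ and $1$, so the argument of~\eqref{eq:13}--\eqref{eq:17} applies and yields $\|A_{i_n}x_n-A_{i_n}u\|\to0$, then $\phi(x_n,\bar{y}_n)\to0$, hence $\bar{y}_n\to p$ and $y_n^i\to p$. Steps~5 and~6 of the proof of Theorem~\ref{thm:VIP-EP-FPP} carry over verbatim to give $p\in\bigcap_{i=1}^M VI(A_i,C)$ and $p\in\bigcap_{k=1}^K EP(f_k)$. For the fixed points, the displayed inequality gives
$$
\alpha_{n,0}\alpha_{n,j}\,g\!\left(\|Jx_n-JS_j\bar{y}_n\|\right)\le\phi(u,x_n)-\phi(u,\bar{u}_n)\to0,
$$
so, since $\liminf_n\alpha_{n,0}\alpha_{n,j}>0$ and $g$ is continuous and strictly increasing with $g(0)=0$, we get $\|Jx_n-JS_j\bar{y}_n\|\to0$, hence $\|x_n-S_j\bar{y}_n\|\to0$ by uniform continuity of $J^{-1}$ on bounded sets; thus $S_j\bar{y}_n\to p$, and as $\bar{y}_n\to p$ and $S_j$ is closed, $S_jp=p$, i.e.\ $p\in\bigcap_{j=1}^N F(S_j)$. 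Hence $p\in F$, and comparing with $x^\dagger:=\Pi_F x_0\in C_n$ through $\phi(x_n,x_0)\le\phi(x^\dagger,x_0)-\phi(x^\dagger,x_n)\le\phi(x^\dagger,x_0)$ and letting $n\to\infty$ gives $\phi(p,x_0)\le\phi(x^\dagger,x_0)$, whence $p=x^\dagger=\Pi_F x_0$.

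There is no genuine analytical difficulty here; the only thing requiring attention --- hence the ``hard part'' --- is the bookkeeping of confirming that each use in Theorem~\ref{thm:VIP-EP-FPP1} of $k_n>1$, of the boundedness of $F$, or of the uniform Lipschitz continuity of the $S_j$ is either rendered vacuous by $\epsilon_n\equiv0$ or replaced by the closedness of $S_j$ applied directly to $\bar{y}_n$. In particular, the implication ``$S_j\bar{y}_n\to p$ and $\bar{y}_n\to p$ $\Rightarrow$ $S_jp=p$'' takes the place of the chain~\eqref{eq:20}--\eqref{eq:22}, which is exactly where the uniform Lipschitz hypothesis was previously indispensable; everything else is routine.
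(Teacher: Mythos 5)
Your proposal is correct and follows exactly the route the paper takes: the paper's own proof of this theorem is a one-line reduction to Theorem \ref{thm:VIP-EP-FPP1} with $k_n\equiv 1$ (hence $\epsilon_n\equiv 0$), and your expanded write-up --- including the observation that the closedness of $S_j$ applied to $S_j\bar{y}_n\to p$, $\bar{y}_n\to p$ replaces the uniform-Lipschitz telescoping chain \eqref{eq:20}--\eqref{eq:22} --- is a faithful and accurate elaboration of that reduction.
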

\begin{proof}
The proof  is similar to that of Theorem $\ref{thm:VIP-EP-FPP1}$ for $S_i$ being closed and quasi $\phi$- asymptotically nonexpansive mapping with $k_n=1$ for all $n\ge 0$.
\end{proof}
\section{A parallel iterative method for quasi $\phi$-nonexpansive mappings and variational inequalities}
\setcounter{theorem}{0}
\setcounter{equation}{0}
\setcounter{remark}{0}
\setcounter{corollary}{0}
In 2004, using Mann's iteration, Matsushita and Takahashi \cite{MT2004} proposed the following scheme for finding a fixed point of a 
relatively nonexpansive mapping $T$:
\begin{equation} \label{eq:MT2004}
x_{n+1}=\Pi_C J^{-1}\left(\alpha_nJx_n+(1-\alpha_n)JTx_n\right),\quad n=0,1,2,\ldots,
\end{equation}
where $x_0\in C$ is given.
They proved that if the interior of $F(T)$ is nonempty then the sequence $\left\{x_n\right\}$ generated by $(\ref{eq:MT2004})$ converges 
strongly to some point in $F(T)$. Recently, using Halpern's and Ishikawa's iterative processes, Zhang, Li, and Liu \cite{ZLL2011} have 
proposed modified iterative algorithms of $(\ref{eq:MT2004})$ for a relatively nonexpansive mapping. \\
In this section, employing the ideas of Matsushita and Takahashi \cite{MT2004} and Anh and Chung \cite{AC2013}, we propose a parallel 
hybrid iterative algorithm for  finite families of closed and quasi $\phi$- nonexpansive mappings $\{S_j\}_{j=1}^N$ and variational 
inequalities $\{VI(A_i, C)\}_{i=1}^M$:
\begin{equation}\label{eq:VIP-EP-FPP3}
\left \{
\begin{array}{ll}
&x_0\in C\quad \mbox{chosen arbitrarily},\\
&y_n^i=\Pi_C\left(J^{-1}(Jx_n-\lambda_n A_i x_n)\right), i=1,2,\ldots M, \\
&i_n=\arg\max\left\{||y_n^i-x_n||:i=1,2,\ldots M.\right\}, \bar{y}_n=y_n^{i_n},\\
&z_n^j=J^{-1}\left(\alpha_n Jx_n+(1-\alpha_n)JS_j\bar{y}_n\right),j=1,2,\ldots N,\\
&j_n=\arg\max\left\{||z_n^j-x_n||:j=1,2,\ldots N\right\}, \bar{z}_n=z_n^{j_n},\\
&x_{n+1}=\Pi_{C}\bar{z}_n,n\ge 0,
\end{array}
\right.
\end{equation}
where, $\left\{\alpha_n\right\}\subset [0,1]$, such that $\lim_{n\to\infty}\alpha_n=0$.
\begin{remark}
Method $(\ref{eq:VIP-EP-FPP3})$ can be employed for a finite family of relatively nonexpansive mappings without the assumption on their 
closedeness. 
\end{remark}
\begin{remark} Method $(\ref{eq:VIP-EP-FPP3})$ modifies the corresponding method $(\ref{eq:MT2004})$ in the following aspects:
\begin{itemize}
\item A relatively nonexpansive mapping $T$ is replaced with a finite family of quasi $\phi$-nonexpansive mappings, where the restriction 
$F(S_j)=\widehat{F}(S_j)$ is not required.
\item A parallel hybrid method for finite families of closed and quasi $\phi$- nonexpansive mappings and variational inequalities is 
considered instead of an iterative method for a relatively nonexpansive mapping.
\end{itemize}
\end{remark}
\begin{theorem}\label{thm:VIP-EP-FPP3}
Let $E$ be a real uniformly smooth and 2-uniformly convex Banach space with dual space $E^*$ and $C$ be a nonempty closed convex 
subset of $E$. Assume that $\left\{A_i\right\}_{i=1}^M$ is a finite family of mappings satisfying conditions 
{\rm (V1)-(V3)}, $\left\{S_j\right\}_{j=1}^N$ is a finite family of closed and quasi $\phi$-nonexpansive mappings, and 
$\left\{\alpha_n\right\}\subset [0,1]$ satisfies $\lim_{n\to\infty}\alpha_n=0$, $\lambda_n\in [a,b]$ for some $a,b\in (0,\alpha c^2 /2)$. In addition, suppose that the interior of $F=\left(\bigcap_{i=1}^M VI(A_i,C)\right)$ $\bigcap$ $\left(\bigcap_{j=1}^N F(S_j)\right)$ is nonempty. 
Then the sequence $\left\{x_n\right\}$ generated by $(\ref{eq:VIP-EP-FPP3})$ converges strongly to some point $u\in F$. Moreover, $u=\lim_{n\to\infty}\Pi_F x_n.$
\end{theorem}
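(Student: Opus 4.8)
The plan is to adapt the Mann-type argument of Matsushita and Takahashi~\cite{MT2004}: since scheme $(\ref{eq:VIP-EP-FPP3})$ no longer projects onto a shrinking family of sets, strong convergence has to be squeezed out of the hypothesis $\mathrm{int}\,F\neq\emptyset$. As a preliminary observation, $F$ is a nonempty closed convex subset of $C$ — each $VI(A_i,C)$ is closed convex by Lemma~\ref{lem:T2000}, each $F(S_j)$ is closed convex by Lemma~\ref{lem.F-CloseConvex} (a closed quasi $\phi$-nonexpansive mapping is quasi $\phi$-asymptotically nonexpansive with $k_n\equiv1$) — so $\Pi_F x_0$ and $\Pi_F x_n$ are well defined, and clearly $x_n\in C$ for all $n$. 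The first step is to show that $\{\phi(u,x_n)\}$ is nonincreasing for every $u\in F$. Since $x_{n+1}=\Pi_C\bar z_n$, Lemma~\ref{lem.proProjec}(i) gives $\phi(u,x_{n+1})+\phi(x_{n+1},\bar z_n)\le\phi(u,\bar z_n)$; the convexity of $\|\cdot\|^2$ together with the quasi $\phi$-nonexpansiveness of $S_{j_n}$ yields $\phi(u,\bar z_n)\le\alpha_n\phi(u,x_n)+(1-\alpha_n)\phi(u,\bar y_n)$; and estimating $\phi(u,\bar y_n)$ exactly as in $(\ref{eq:3})$ — this is where $2$-uniform convexity (Lemma~\ref{lem:2-uniform-convex}), Lemma~\ref{lem:V-Property}, the $\alpha$-inverse-strong-monotonicity of $A_{i_n}$ and the bound $b<\alpha c^2/2$ are used — gives
\[
\phi(u,\bar y_n)\le\phi(u,x_n)-2a\Bigl(\alpha-\tfrac{2b}{c^2}\Bigr)\|A_{i_n}x_n-A_{i_n}u\|^2\le\phi(u,x_n).
\]
Combining these, $\phi(u,x_{n+1})\le\phi(u,\bar z_n)\le\phi(u,x_n)$, so $\{\phi(u,x_n)\}$ converges for each $u\in F$, and in particular $\{x_n\}$, $\{\bar y_n\}$, $\{\bar z_n\}$, $\{y_n^i\}$, $\{z_n^j\}$ are bounded.

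Next I would exploit the interior assumption. Choose $\hat u\in\mathrm{int}\,F$ and $r>0$ with $\{z:\|z-\hat u\|\le r\}\subset F$. For any such $z$, $\{\phi(z,x_n)\}$ is nonincreasing; expanding $\phi(z,x_n)-\phi(z,x_m)\ge0$ for $m>n$, writing $z=\hat u+rh$ with $\|h\|\le1$, and taking the supremum over $h$, one arrives at
\[
2r\|Jx_n-Jx_m\|\le\phi(\hat u,x_n)-\phi(\hat u,x_m),\qquad m>n.
\]
Since $\{\phi(\hat u,x_n)\}$ converges, $\{Jx_n\}$ is Cauchy in the Banach space $E^*$, hence $Jx_n\to\xi$ for some $\xi\in E^*$. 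As $J^{-1}$ is norm-to-weak$^*$ continuous (property (ii)), $x_n=J^{-1}(Jx_n)\rightharpoonup J^{-1}\xi$; combined with $\|x_n\|=\|Jx_n\|\to\|\xi\|=\|J^{-1}\xi\|$ and the Kadec--Klee property (property (v), valid since $E$ is uniformly convex), $x_n\to p:=J^{-1}\xi$ strongly, with $p\in C$.

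Now I would verify $p\in F$, essentially by re-running the computations of Theorem~\ref{thm:VIP-EP-FPP}. Combining the inequalities of the first step gives $2a(1-\alpha_n)(\alpha-2b/c^2)\|A_{i_n}x_n-A_{i_n}u\|^2\le\phi(u,x_n)-\phi(u,x_{n+1})\to0$, and since $\alpha_n\to0$ this forces $\|A_{i_n}x_n-A_{i_n}u\|\to0$; then, as in $(\ref{eq:13})$, $\phi(x_n,\bar y_n)\le(4b^2/c^2)\|A_{i_n}x_n-A_{i_n}u\|^2\to0$, so by Lemma~\ref{lem.prophi} $\|x_n-\bar y_n\|\to0$, hence $\|x_n-y_n^i\|\to0$ and $y_n^i\to p$ for every $i$; repeating Step~5 of the proof of Theorem~\ref{thm:VIP-EP-FPP} (the maximal monotone operator $Q_i=A_i+N_C$, Lemma~\ref{lem:N-Normal-Cone}, Lemma~\ref{lem.proProjec}(ii), and Lipschitz continuity of $A_i$) yields $p\in VI(A_i,C)$ for all $i$. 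For the fixed point sets: Lemma~\ref{lem.proProjec}(i) gives $\phi(x_{n+1},\bar z_n)\le\phi(u,\bar z_n)-\phi(u,x_{n+1})$, and since $\phi(u,x_{n+1})\le\phi(u,\bar z_n)\le\phi(u,x_n)$ with the outer terms sharing a limit, $\phi(x_{n+1},\bar z_n)\to0$, so $\|x_{n+1}-\bar z_n\|\to0$ and, together with $\|x_{n+1}-x_n\|\to0$, $\|x_n-\bar z_n\|\to0$; hence $\|x_n-z_n^j\|\to0$ and $z_n^j\to p$ for all $j$. From $Jz_n^j=\alpha_n Jx_n+(1-\alpha_n)JS_j\bar y_n$ we get $(1-\alpha_n)\|Jx_n-JS_j\bar y_n\|=\|Jx_n-Jz_n^j\|\to0$, so $\|Jx_n-JS_j\bar y_n\|\to0$, whence $\|x_n-S_j\bar y_n\|\to0$ (uniform continuity of $J$ and $J^{-1}$ on bounded sets); since also $\|x_n-\bar y_n\|\to0$, $\|\bar y_n-S_j\bar y_n\|\to0$, and closedness of $S_j$ with $\bar y_n\to p$ gives $S_jp=p$. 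Thus $p\in F$.

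Finally, the limit is identified cheaply: since $p\in F$, $\phi(\Pi_F x_n,x_n)\le\phi(p,x_n)\to\phi(p,p)=0$, so Lemma~\ref{lem.prophi} gives $\|\Pi_F x_n-x_n\|\to0$, and because $x_n\to p$ we conclude $\Pi_F x_n\to p$. Hence $x_n\to p\in F$ and $p=\lim_{n\to\infty}\Pi_F x_n$, as claimed. I expect the only genuinely delicate point to be the second step: replacing the telescoping that the hybrid methods of Theorems~\ref{thm:VIP-EP-FPP}--\ref{thm:VIP-EP-FPP1} get from the sets $C_n$ by the interior-of-$F$ argument that makes $\{Jx_n\}$ Cauchy — everything else reproduces estimates already in hand.
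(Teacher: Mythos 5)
Your proof is correct and follows essentially the same route as the paper's: monotone decrease of $\phi(u,x_n)$ along the iterates, the interior-of-$F$ argument that makes $\{Jx_n\}$ Cauchy, and recovery of the asymptotic regularity estimates to place the limit in each $VI(A_i,C)$ and each $F(S_j)$. Your minor variations are all sound --- passing from $\{Jx_n\}$ Cauchy to $x_n\to p$ via norm-to-weak continuity of $J^{-1}$ plus the Kadec--Klee property rather than the paper's uniform continuity of $J^{-1}$ on bounded sets, obtaining $\phi(x_{n+1},\bar z_n)\to 0$ by sandwiching $\phi(u,\bar z_n)$ between $\phi(u,x_{n+1})$ and $\phi(u,x_n)$ instead of the paper's bound $\phi(x_{n+1},\bar z_n)\le\alpha_n\phi(S_{j_n}\bar y_n,x_n)$, and supplying the short verification of $u=\lim_{n\to\infty}\Pi_F x_n$, which the paper's proof actually leaves out.
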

\begin{proof}
By Lemma $\ref{lem.F-CloseConvex}$, the subset $F$ is closed and convex, hence the generalized projections $\Pi_F, \Pi_C$ are 
well-defined. We now show that the sequence $\left\{x_n\right\}$ is bounded. Indeed, for every $u\in F$, from Lemma $\ref{lem.proProjec}$ and the convexity of $\left\|.\right\|^2$, we have
\begin{eqnarray*}
\phi(u,x_{n+1})&=&\phi(u,\Pi_C \bar{z}_n)\\
&\le&\phi(u,\bar{z}_n)\\
&=& \left\|u\right\|^2-2\left\langle u, J\bar{z}_n\right\rangle+\left\|\bar{z}_n\right\|^2\\
&=& \left\|u\right\|^2-2\alpha_n\left\langle u, Jx_n\right\rangle-2(1-\alpha_n)\left\langle u, JS_{j_n}\bar{y}_n\right\rangle\\
&&+\left\|\alpha_n Jx_n+(1-\alpha_n)JS_{j_n}\bar{y}_n\right\|^2\\
&\le& \left\|u\right\|^2-2\alpha_n\left\langle u, Jx_n\right\rangle-2(1-\alpha_n)\left\langle u, JS_{j_n}\bar{y}_n\right\rangle\\
&& +\alpha_n\left\|\bar{y}_n\right\|^2+(1-\alpha_n)\left\|S_{j_n}\bar{y}_n\right\|^2\\
&=&\alpha_n\phi(u,x_n)+(1-\alpha_n)\phi(u,S_{j_n}\bar{y}_n)\\
&\le& \alpha_n\phi(u,x_n)+(1-\alpha_n)\phi(u,\bar{y}_n).
\end{eqnarray*}
Arguing similarly to $(\ref{eq:3})$ and $(\ref{eq:4})$, we obtain
\begin{equation}\label{eq:35}
\phi(u,x_{n+1})\le \phi(u,x_n)-2a(1-\alpha_n) \left(\alpha-\frac{2b}{c^2}\right)||A_{i_n} x_n-A_{i_n}u||^2\le \phi(u,x_n).
\end{equation}
Therefore, the sequence $\left\{\phi(u,x_n)\right\}$ is decreasing. Hence there exists a finite limit of  $\left\{\phi(u,x_n)\right\}$. This together with $(\ref{eq:proPhi})$ and $(\ref{eq:35})$ implies that the sequences $\left\{x_n\right\}$ is bounded and 
\begin{equation}\label{eq:36}
\lim_{n\to\infty}||A_{i_n} x_n-A_{i_n}u||=0.
\end{equation}
Next, we show that $\left\{x_n\right\}$ converges strongly to some point $u$ in $C$. Since the interior of $F$ is nonempty, there exist $p\in F$ and $r>0$ such that
$$
p+rh\in F,
$$
for all $h\in E$ and $\left\|h\right\|\le 1$. Since the sequence $\left\{\phi(u,x_n)\right\}$ is decreasing for all $u\in F$, we have
\begin{equation}\label{eq:38}
\phi(p+rh,x_{n+1})\le \phi(p+rh,x_n).
\end{equation}
From $(\ref{eq:LFP})$, we find that
$$
\phi(u,x_n)=\phi(u,x_{n+1})+\phi(x_{n+1},x_n)+2\left\langle x_{n+1}-u, Jx_n-Jx_{n+1}\right\rangle,
$$
for all $u\in F$. Therefore,
\begin{eqnarray}
\phi(p+rh,x_n)&=&\phi(p+rh,x_{n+1})+\phi(x_{n+1},x_n)\nonumber\\ 
&&+2\left\langle x_{n+1}-(p+rh), Jx_n-Jx_{n+1}\right\rangle.\label{eq:40}
\end{eqnarray}
% \begin{equation}\label{eq:40}
% \phi(p+rh,x_n)=\phi(p+rh,x_{n+1})+\phi(x_{n+1},x_n)+2\left\langle x_{n+1}-(p+rh), Jx_n-Jx_{n+1}\right\rangle.
% \end{equation}
From $(\ref{eq:38}), (\ref{eq:40})$, we obtain
$$
\phi(x_{n+1},x_n)+2\left\langle x_{n+1}-(p+rh), Jx_n-Jx_{n+1}\right\rangle \ge 0.
$$
This inequality is equivalent to
\begin{equation}\label{eq:41*}
\left\langle h, Jx_n-Jx_{n+1}\right\rangle \le \frac{1}{2r}\left\{\phi(x_{n+1},x_n)+2\left\langle x_{n+1}-p, Jx_n-Jx_{n+1}\right\rangle\right\}.
\end{equation}
From $(\ref{eq:LFP})$, we also have 
\begin{equation}\label{eq:42}
\phi(p,x_n)=\phi(p,x_{n+1})+\phi(x_{n+1},x_n)+2\left\langle x_{n+1}-p, Jx_n-Jx_{n+1}\right\rangle.
\end{equation}
From $(\ref{eq:41*}), (\ref{eq:42})$, we obtain
$$
\left\langle h, Jx_n-Jx_{n+1}\right\rangle \le \frac{1}{2r}\left\{\phi(p,x_n)-\phi(p,x_{n+1})\right\},
$$
for all $\left\|h\right\|\le 1$. Hence
$$
\sup_{\left\|h\right\|\le 1}\left\langle h, Jx_n-Jx_{n+1}\right\rangle \le \frac{1}{2r}\left\{\phi(p,x_n)-\phi(p,x_{n+1})\right\}.
$$
The last relation is equivalent to
$$
\left\|Jx_n-Jx_{n+1}\right\|\le \frac{1}{2r}\left\{\phi(p,x_n)-\phi(p,x_{n+1})\right\}.
$$
Therefore, for all $n,m\in N$ and $n>m$, we have
\begin{eqnarray*}
\left\|Jx_n-Jx_{m}\right\|&=&\left\|Jx_n-Jx_{n-1}+Jx_{n-1}-Jx_{n-2}+\ldots+Jx_{m+1}-Jx_{m}\right\|\\ 
& \le& \sum_{i=m}^{n-1} \left\|Jx_{i+1}-Jx_{i}\right\|\\
&\le& \frac{1}{2r}\sum_{i=m}^{n-1}\left\{\phi(p,x_i)-\phi(p,x_{i+1})\right\}\\
&=&\frac{1}{2r}\left(\phi(p,x_m)-\phi(p,x_{n})\right).
\end{eqnarray*}
Letting $m,n\to\infty$, we obtain
$$
\lim_{m,n\to\infty}\left\|Jx_n-Jx_{m}\right\|=0.
$$
Since $E$ is uniformly convex and uniformly smooth Banach space, $J^{-1}$ is uniformly continuous on every bounded subset of $E$.
From the last relation we have
$$
\lim_{m,n\to\infty}\left\|x_n-x_{m}\right\|=0.
$$
Therefore, $\left\{x_n\right\}$ is a Cauchy sequence. Since $E$ is complete and $C$ is closed and convex, $\left\{x_n\right\}$ converges 
strongly to some point $u$ in $C$.
By arguing similarly to $ (\ref{eq:13})$, we obtain 
$$
\phi(x_n,\bar{y}_n)\le\frac{4b^2}{c^2}||A_{i_n} x_n-A_{i_n}u||^2.
$$
This relation together with $ (\ref{eq:36})$ implies that $\phi(x_n,\bar{y}_n)\to 0$. Therefore, $||x_n-\bar{y}_n||\to 0$. By the definition of 
$i_n$, we conclude that $||x_n-y^i_n||\to 0$ for all $1\le i\le M$. Hence,
\begin{equation}\label{eq:45}
\lim_{n\to\infty}y_n^i=u\in C,
\end{equation}
for all $1\le i\le M$.
From $x_{n+1}=\Pi_C \bar{z}_n$ and Lemma $\ref{lem.proProjec}$, we have
\begin{equation}\label{eq:46}
\phi(S_{j_n}\bar{y}_n,x_{n+1})+\phi(x_{n+1},\bar{z}_n)=\phi(S_{j_n}\bar{y}_n,\Pi_C \bar{z}_n)+\phi(\Pi_C \bar{z}_n,\bar{z}_n)\le \phi(S_{j_n}\bar{y}_n,\bar{z}_n).
\end{equation}
Using the convexity of $\left\|.\right\|^2$ we have
\begin{eqnarray*}
\phi(S_{j_n}\bar{y}_n,\bar{z}_n)&=&\left\|S_{j_n}\bar{y}_n\right\|^2-2\left\langle S_{j_n}\bar{y}_n,J\bar{z}_n\right\rangle+\left\|\bar{z}_n\right\|^2\\ 
&= &\left\|S_{j_n}\bar{y}_n\right\|^2-2\alpha_n\left\langle S_{j_n}\bar{y}_n,Jx_n\right\rangle-2(1-\alpha_n)\left\langle S_{j_n}\bar{y}_n,JS_{j_n}\bar{y}_n\right\rangle+\\
&&+\left\|\alpha_n Jx_n+(1-\alpha_n)JS_{j_n} \bar{y}_n\right\|^2\\ 
&\le &\left\|S_{j_n}\bar{y}_n\right\|^2-2\alpha_n\left\langle S_{j_n}\bar{y}_n,Jx_n\right\rangle-2(1-\alpha_n)\left\langle S_{j_n}\bar{y}_n,JS_{j_n}\bar{y}_n\right\rangle+\\
&&+\alpha_n\left\|x_n\right\|^2+(1-\alpha_n)\left\|S_{j_n} \bar{y}_n\right\|^2\\
&=&\alpha_n\phi(S_{j_n}\bar{y}_n,x_n)+(1-\alpha_n)\phi(S_{j_n}\bar{y}_n,S_{j_n} \bar{y}_n)\\
&=&\alpha_n\phi(S_{j_n}\bar{y}_n,x_n).
\end{eqnarray*}
The last inequality together with $(\ref{eq:46})$ implies that 
$$
\phi(x_{n+1},\bar{z}_n)\le \alpha_n\phi(S_{j_n}\bar{y}_n,x_n).
$$
Therefore, from the boundedness of $\left\{\phi(S_{j_n}\bar{y}_n,x_n)\right\}$ and $\lim_{n\to\infty}\alpha_n =0$, we get
$$
\lim_{n\to\infty}\phi(x_{n+1},\bar{z}_n)=0.
$$
Hence, $||x_{n+1}-\bar{z}_n||\to 0$. Since $||x_n-x_{n+1}||\to 0$, we find $||x_n-\bar{z}_n||\to 0$, and by the definition of $j_n$, we 
obtain $||x_n-z_n^j||\to 0$ for all $1\le j\le N$. Thus,
\begin{equation}\label{eq:47}
\lim_{n\to\infty}z_n^j=p.
\end{equation}
By arguing similarly to Steps 4, 5 in the proof of Theorem $\ref{thm:VIP-EP-FPP}$, we obtain 
$$p\in F =\left(\bigcap_{i=1}^M VI(A_i,C)\right)\bigcap \left(\bigcap_{j=1}^N F(S_j)\right).$$
The proof of Theorem $\ref{thm:VIP-EP-FPP3}$ is complete.
\end{proof}
\section{A numerical example}
\setcounter{lemma}{0}
\setcounter{theorem}{0}
\setcounter{equation}{0}
\setcounter{remark}{0}

Consider a Hilbert space $E = \mathbb {R}^1$ with the standart inner product $\left\langle x,y \right\rangle := xy$ and the norm $||x|| := |x| $ for all $x, y \in E$.  Let $C:= [0, 1] \subset E.$ The normalized dual mapping $J=I$ and the Lyapunov functional $\phi (x,y) = |x-y|^2.$ It is well known that, the modulus of convexity of Hilbert space $E$ is $\delta_E(\epsilon) = 1-\sqrt{1-\epsilon^2/4}\ge \frac{1}{4} \epsilon^2$. Therefore, $E$ is $2$-uniformly convex. Moreover, the best constant $\frac{1}{c}$ satisfying relations $|x-y|\le\frac{2}{c^2}|Jx-Jy|=\frac{2}{c^2}|x-y|$ with $0<c\le 1$ is $1$. This implies that $c=1$.
Define the mappings $A_i(x) := x - \frac{x^{i+1}}{i+1},   x \in C,  i =1, \ldots, M,$  and consider the variational inequalities
$$ \left\langle A_i(p^*),p-p^* \right\rangle \ge 0,\quad \forall p\in C,$$
for $i=1, \ldots, M.$
Clearly, $VI(A_i, C) = \{0\},  i =1,\ldots, M.$ Since each mapping $U_i(x) := \frac{x^{i+1}}{i+1}$ is nonexpansive, the mapping $A_i = I - U_i,  i=1,\ldots, M,$ is $\frac{1}{2}$-inverse strongly monotone. Besides, $|A_i(y)| = |A_i(y) - A_i(0)|,$ hence all the assumptions (V1)-(V3) for the variational inequalities are satisfied.\\
Further, let $\{t_i\}_{i=1}^N$ and $\{s_i\}_{i=1}^N$ be two sequences of positive numbers, such that  $0 < t_1 < \ldots < t_N  < 1$ and $  s_i \in (1,  \frac{1}{1- t_i}];    i =1, \ldots, N.$    Define the mappings $S_i: C \to C,    i =1,\ldots, N,$ by putting
$ S_i(x) = 0, $ for $ x \in [0, t_i],$ and $S_i(x) = s_i(x-t_i),$ if $x \in [t_i, 1]$.\\
It is easy to verify that $F(S_i) = \{0\},  \phi(S_i(x), 0)= |S_i(x)|^2 \leq |x|^2 = \phi(x,0) $ for every $x\in C,$ and $|S_i(1) - S_i(t_i)| = s_i 
(1-t_i) > |1-t_i|.$
Hence, the mappings $S_i$ are quasi $\phi$-nonexpansive but not nonexpansive.\\
Finally, let $0<\xi_1 < \ldots < \xi_K< 1$ and $ \eta_k \in (0, \xi_k),   k=1,\ldots, K,$ be two given sequences. Consider $K$ bifunctions $f_k(x,y) := B_k(x)(y-x),  k = 1,\ldots, K,$ where
$ B_k(x) = \frac{\eta_k}{\xi_k}x$ if  $0 \leq x \leq \xi_k,$ and $B_k(x) = \eta_k$ if $\xi_k \leq x \leq 1.$\\
It is easy to verify that all the assumptions (A1)-(A4) for the bifunctions $f_k(x,y)$ are fulfilled. Besides, $EP(f_k) = \{0\}.$  Thus, the solution set
$$F :=\left(\bigcap_{i=1}^M VI(A_i,C)\right)\bigcap \left(\bigcap_{j=1}^N F(S_j)\right)\bigcap\left(\bigcap_{k=1}^K EP(f_k)\right) = \{0\}.$$
According to Theorem $\ref{thm:VIP-EP-FPP4}$, the iteration sequence $\{x_n\}$ generated by
\begin{align*}
&y_n^i=\Pi_C\left(x_n-\lambda_n (x_n -\frac{(x_n)^{i+1}}{i+1})\right), i=1,2,\ldots M, \\
&i_n=\arg\max\left\{|y_n^i-x_n| : i=1,2,\ldots M \right\}, \bar{y}_n=y_n^{i_n},\\
&z_n^j= \alpha_n x_n+(1-\alpha_n)S_j\bar{y}_n, j=1,2,\ldots N,\\
&j_n=\arg\max\left\{|z_n^j-x_n|: j=1,2,\ldots N\right\}, \bar{z}_n=z_n^{j_n},\\
&u_n^k=T_{r_n}^k \bar{z}_n,k=1,2,\ldots K, \\
&k_n=\arg\max\left\{|u_n^k-x_n|: k=1,2,\ldots K\right\}, \bar{u}_n=u_n^{k_n},\\
&C_{n+1}=\left\{z\in C_n:\phi(z,\bar{u}_n)\le \phi(z,\bar{z}_n)\le\phi(z,x_n)\right\},\\
&x_{n+1}=\Pi_{C_{n+1}}x_0,n\ge 0.
\end{align*}
% \begin{equation}\label{example1}
% \left \{
% \begin{array}{ll}
% &y_n^i=\Pi_C\left(x_n-\lambda_n (x_n -\frac{(x_n)^{i+1}}{i+1})\right), i=1,2,\ldots M, \\
% &i_n=\arg\max\left\{|y_n^i-x_n| : i=1,2,\ldots M \right\}, \bar{y}_n=y_n^{i_n},\\
% &z_n^j= \alpha_n x_n+(1-\alpha_n)S_j\bar{y}_n, j=1,2,\ldots N,\\
% &j_n=\arg\max\left\{|z_n^j-x_n|: j=1,2,\ldots N\right\}, \bar{z}_n=z_n^{j_n},\\
% &u_n^k=T_{r_n}^k \bar{z}_n,k=1,2,\ldots K, \\
% &k_n=\arg\max\left\{|u_n^k-x_n|: k=1,2,\ldots K\right\}, \bar{u}_n=u_n^{k_n},\\
% &C_{n+1}=\left\{z\in C_n:\phi(z,\bar{u}_n)\le \phi(z,\bar{z}_n)\le\phi(z,x_n)\right\},\\
% &x_{n+1}=\Pi_{C_{n+1}}x_0,n\ge 0.
% \end{array}
% \right.
% \end{equation}
strongly converges to $x^\dagger := 0.$ \\
A straightforward calculation yields $y_n^i= (1-\lambda_n)x_n-\lambda_n \frac{(x_n)^{i+1}}{i+1}, i=1,2,\ldots M.$\\
Further, the element $u:= u_n^k=T_{r_n}^k \bar{z}_n$ is a solution of the following inequality
\begin{equation}\label{eq:ex3}
(y-u) [B_k(u) +u - \bar{z}_n] \geq 0  \quad \forall y \in [0;1].
\end{equation}
From $(\ref{eq:ex3})$, we find that $u=0$ if and only if  $\bar{z}_n=0$. Therefore, if $\bar{z}_n=0$ then the algorithm stops and $x^\dagger = 0$. If $\bar{z}_n\ne 0$ then inequality $(\ref{eq:ex3})$ is equivalent to a system of inequalities
\begin{equation*}\label{ex4}
\left \{
\begin{array}{ll}
&-u(B_k(u) +u - \bar{z}_n) \geq 0,\\
&(1-u) (B_k(u) +u - \bar{z}_n) \geq 0.
\end{array}
\right.
\end{equation*}
The last system yields $B_k(u) +u =\bar{z}_n$. Hence, if $0<\bar{z}_n\le\eta_k+\xi_k$ then $u_n^k:=u=\frac{\xi_k}{\xi_k+\eta_k}\bar{z}_n$. Otherwise, if $\xi_k+\eta_k<\bar{z}_n\le 1,$ then $u_n^k:=u=\bar{z}_n-\eta_k$. \\
Using the fact that  $F = \{0\} \subset C_{n+1}$ we can conclude that $0 \leq \bar{u}_n \leq \bar{z}_n \leq x_n \leq 1.$ From the definition of $C_{n+1}$ we find
\begin{equation*}\label{eq:ex5}
C_{n+1} = C_n \bigcap [0, \frac{\bar{z}_n + \bar{u}_n}{2}].
\end{equation*}
According to Step 1 of the proof of  Theorem $\ref{thm:VIP-EP-FPP}$, $C_n$ is a closed subset, hence $[0, x_n] \subset C_n$ because $0, x_n \in C_n.$ Further, since $\frac{\bar{u}_n +\bar{z}_n}{2} \leq x_n,$ it implies that $[0, \frac{\bar{u}_n +\bar{z}_n}{2}] \subset [0, x_n] \subset C_n.$ 
Thus, $C_{n+1} = [0, \frac{\bar{u}_n + \bar{z}_n}{2}].$\\
For the sake of comparison between the computing times in the parallel and sequential modes, we choose sufficiently large numbers 
$N, K, M$ and a slowly convergent to zero sequence $\{\alpha_n\}$. \\
\noindent The numerical experiment is performed on a LINUX cluster 1350 with 8 computing nodes. Each node contains two Intel Xeon 
dual core 3.2 GHz, 2GBRam. All the programs are written in C.
For given tolerances we compare the execution time of  algorithm (5.1)  in both parallel 
and sequential modes. We denote by $TOL$- the tolerance $\|x_k - x^*\|$;  $T_p$ - the execution time in parallel mode using 2 CPUs  (in seconds), and $T_s$- the execution time in sequential mode (in seconds). The computing times in both modes are given in Tables 1,  2.\\
 According to Tables 1, 2, in the most favourable cases, the speed-up and the efficiency of the parallel algorithm are $S_p = T_s/T_p \approx 2.0;  E_p =
S_p/2 \approx 1,$ respectively.\\
\begin{table}
\caption{Experiment with $\alpha_n = \frac{1}{\log\left(\log(n+10)\right)}$ }
\label{tab:1}       
\begin{tabular}{lll}
\hline\noalign{\smallskip}
TOL & $T_p$ & $T_s$  \\
\noalign{\smallskip}\hline\noalign{\smallskip}
$10^{-7}$ & 21.84 & 42.42 \\
$10^{-9}$ & 26.46 & 51.24 \\
\noalign{\smallskip}\hline
\end{tabular}
\end{table}
\begin{table}
\caption{Experiment with $\alpha_n = \frac{\log n}{n}$}
\label{tab:2}       % Give a unique label
% For LaTeX tables use
\begin{tabular}{lll}
\hline\noalign{\smallskip}
TOL & $T_p$ & $T_s$  \\
\noalign{\smallskip}\hline\noalign{\smallskip}
 $10^{-7}$ & 6.09 & 11.97 \\
 $10^{-9}$ & 8.10 & 14.28 \\
\noalign{\smallskip}\hline
\end{tabular}
\end{table}
\section{Conclusions} In this paper we proposed two strongly convergent parallel hybrid iterative methods for finding a common element of the set of fixed 
points of quasi $\phi$-asymptotically nonexpansive mappings, the set of solutions of variational  inequalities, and the set of solutions of equilibrium problems  in uniformly smooth and 2-uniformly convex Banach spaces. A numerical example was given to demonstrate the efficiency of the proposed parallel algorithms.
\section{Acknowledgements} 
The research of the first author was partially supported by Vietnam Institute for Advanced Study in Mathematics (VIASM) and  
Vietnam National Foundation for Science and Technology Development (NAFOSTED).

\end{document}